\newtheorem{theorem}{Theorem}[section]
\newtheorem{corollary}[theorem]{Corollary}
\newtheorem{lemma}[theorem]{Lemma}
\newtheorem{proposition}[theorem]{Proposition}
\theoremstyle{definition}
\newtheorem{definition}[theorem]{Definition}
\newtheorem{remark}[theorem]{Remark}
\newtheorem{example}[theorem]{Example}
\newtheorem{assumption}[theorem]{Assumption}
\newcommand{\C}{\mathbb{C}}
\newcommand{\R}{\mathbb{R}}
\newcommand{\diag}{\operatorname{diag}\,}
\renewcommand{\H}{\mathcal{H}}
\newcommand{\N}{\mathbb{N}}
\newcommand{\z}{\zeta}
\renewcommand{\sp}[2]{\langle {#1}, {#2} \rangle}
\newcommand{\dom}{D}
\newcommand{\norm}[1]{\left\Vert{#1}\right\Vert}
\newcommand{\abs}[1]{\left|{#1}\right|}
\newcommand{\A}{\mathcal{A}}
\newcommand{\B}{\mathcal{B}}
\newcommand{\T}{\mathcal{T}}
\newcommand{\ran}{\operatorname{ran}\,}
\renewcommand{\Re}{\operatorname{Re}\,}
\renewcommand{\Im}{\operatorname{Im}\,}
\begin{document}

\title{Stability and Stabilization of Infinite-dimensional Linear Port-Hamiltonian Systems}
\author{Bj\"orn Augner\footnote{Fachbereich C -- Mathematik und Naturwissenschaften, Bergische Universit\"at Wuppertal, Gau\ss{}stra\ss{}e 20, D-42119 Wuppertal, Germany. The first author is supported by Deutsche Forschungsgemeinschaft (Grant JA 735/8-1). augner@uni-wuppertal.de} \and Birgit Jacob\footnote{Fachbereich C -- Mathematik und Naturwissenschaften, Bergische Universit\"at Wuppertal, Gau\ss{}stra\ss{}e 20, D-42119 Wuppertal, Germany. jacob@math.uni-wuppertal.de}}

\maketitle

\begin{abstract}
Stability and stabilization of linear port-Hamiltonian systems on infinite-dimensional spaces are investigated.
This class is general enough to include models of beams and waves as well as transport and Schr\"odinger equations with boundary control and observation. The analysis is based on the frequency domain method which gives new results for second order port-Hamiltonian systems and hybrid systems. Stabilizing controllers with colocated input and output are designed.
The obtained results are applied to the Euler-Bernoulli beam.
\end{abstract}

\textbf{Keywords:} Infinite-dimensional linear port-Hamiltonian systems, hybrid systems, asymptotic stability, exponential stability, stabilization, $C_0$-semigroup, frequency domain method.

\textbf{MSC:} Primary: 93D15, 93D20; Secondary: 35L25, 47D06.

\section{Introduction}

In recent years there has been a growing interest in the stability and stabilization of wave and beam equations.
For several of these equations results for structural damping or boundary feedback have been detected using Lyapunov methods, a Riesz basis approach or frequency domain methods.
A large class of these equations may be written in the form of \emph{port-Hamiltonian systems}
	\begin{equation}
	\frac{\partial x}{\partial t}(t,\z)
	 = \sum_{k=0}^N P_k \frac{\partial^k (\H x)}{\partial \z^k}(t,\z),
	 \quad
	 t \geq 0, \ \z \in (0,1)
	 \label{eqn:1}
	\end{equation}
with suitable boundary conditions.
This class covers in particular the wave equation, the transport equation, the Timoshenko beam equation (all $N = 1$), but also the Schr\"odinger equation and the Euler-Bernoulli beam equation (both $N = 2$).
For distributed parameter systems as port-Hamiltonian systems see \cite{VanDerSchaftMaschke_2002} and in particular the Ph.D thesis \cite{Villegas_2007}.
We follow this unified approach and employ the rich theory of one-parameter $C_0$-semigroups of linear operators (e.g. \cite{EngelNagel_2000}) and, more specifically, some of the stability theory (\cite{ArendtBatty_1988}, \cite{Eisner_2010}, \cite{Gearhart_1978}, \cite{LyubichPhong_1988}, \cite{Pruss_1984}, \cite{VanNeerven_1996}).
Our investigation has the following two parts: stability (or stabilization by static feedback, i.e. pure infinite-dimensional systems) and stabilization by dynamical feedback (i.e. hybrid systems).
We concentrate only on boundary feedback stabilization, although most of our results naturally extend to situations with structural damping.
For the pure infinite-dimensional part already some results for port-Hamiltonian systems have been known, especially for the case $N = 1$ (\cite{Engel_2013}, \cite{JacobZwart_2012}, \cite{VillegasEtAl_2009}) whereas for the case $N = 2$ most of the research has been focussed on particular examples of beam equations (\cite{ChenEtAl_1987}, \cite{ChenEtAl_1987a}, \cite{GuoWangYung_2005}).
On the other hand, for beam equations hybrid systems have been investigated for some time now (\cite{GuoHuang_2004}, \cite{LittmanMarkus_1988}, \cite{LiuLiu_2000}) and recently for SIP controllers with colocated input- and output map a nice result for the case $N = 1$ has been established (\cite{RamirezZwartLeGorrec_2013}).
The latter turns out to be a special case of the results presented here.

This article is organised as follows.
Section \ref{sec:infinite_dimensional_phs} is devoted to pure infinite-di\-men\-sional port-Hamiltonian systems, where in Subsection \ref{sec:semigroup_generation} we derive the contraction semigroup generation theorem for the operator $A$ associated to the evolution equation (\ref{eqn:1}).
However, our main objective is to investigate the asymptotic behaviour of port-Hamiltonian systems.
We focus on two types of stability concepts.
Namely let $(T(t))_{t\geq0}$ be any $C_0$-semigroup on $X$.
We say that $(T(t))_{t\geq0}$ is \emph{asymptotically} (strongly) \emph{stable} if
	\begin{equation}
	 T(t) x
	  \xrightarrow{t \rightarrow \infty} 0,
	  \quad
	  \text{for all} \ x \in X
	\end{equation}
respectively (uniformly) \emph{exponentially stable} if there exist $M \geq 1$ and $\omega < 0$ with
	\begin{equation}
	 \norm{T(t)}
	  \leq M e^{\omega t},
	  \quad
	  t \geq 0.
	\end{equation}
Here $(T(t))_{t\geq0}$ is the $C_0$-semigroup generated by the port-Hamiltonian operator $A$.
Our approach is based on Stability Theorems \ref{thm:Arendt-Batty} and \ref{thm:gearhart}.
These results motivate to introduce properties ASP, AIEP and ESP in Subsection \ref{subsec:properties}.
We then only has to test whether a particular function $f: \dom(A_0) \rightarrow \R_+$ has one of these properties to obtain the corresponding stability result.
The main advantage of using these properties does not lie in the pure infinite-dimensional case (with static feedback), but in the case of dynamical feedback via (finite-dimensional) controllers which we consider later in Section \ref{sec:interconnection}.
In the latter case we use the same properties ASP, AIEP and ESP in order to deduce results for interconnected systems without having to reprove the same auxiliary results once again.
We start with asymptotic (strong) stability and based on the Stability Theorem \ref{thm:Arendt-Batty} by Arendt, Batty, Lyubich and Phong give a general asymptotic stability result for port-Hamiltonian systems.
Then we continue with exponential stability for the case $N = 1$ in Subsection \ref{subsec:exponential_stability_first_order}.
This class of systems has been extensively studied in the book \cite{JacobZwart_2012}.
Originally in \cite{VillegasEtAl_2009} the authors presented an exponential stability result based on some \emph{sideways energy} estimate (Lemma III.1 in \cite{VillegasEtAl_2009}) which goes back to an idea of Cox and Zuazua (Theorem 10.1 in \cite{CoxZuazua_1995}).
We establish the same result using a frequency domain method based on Gearthart's Theorem \ref{thm:gearhart}.
It turns out that by this technique we do not only obtain a different proof for exponential stability of first order port-Hamiltonian systems, but the method extends to a proof for second order systems as well, whereas the idea in \cite{VillegasEtAl_2009} seems to be restricted to the transport equation-like situation for first order systems.
We even present a general exponential stability result for second order port-Hamiltonian systems in Subsection \ref{subsec:exponential_stability_second_order}.
Moreover we give a sufficient condition for second order systems with some special structure which applies in particular to Euler-Bernoulli beam equations.

Section \ref{sec:interconnection} then constitutes a breach since we leave the pure infinite-dimensional setup and consider \emph{hybrid systems} which consist of both a infinite-dimensional subsystem (governed by a port-Hamiltonian partial differential equation) and a finite-dimensional subsystem which we think of as a controller (modelled by an ordinary differential equation).
In applications these situations are characterized by an energy functional which splits into a continuous part and a discrete part.
We interpret the total system as an interconnection of two subsystems which interact with each other by means of boundary control and observation.
We then depict how the theory for the pure infinite-dimensional case naturally carries over to these hybrid systems.
After stating the generation result in Subsection \ref{subsec:interconnection_generation} we obtain a stability result for hybrid systems in Subsection \ref{subsec:interconnection_stability} without additional structure conditions.
For the special class of strictly input passive (SIP) controllers with colocated input and output we then obtain in Subsection \ref{subsec:sip_controllers} a stability result which is much more suitable for applications.
As a special case we rediscover the main result of \cite{RamirezZwartLeGorrec_2013} (which has been proved using a Lyapunov method with the same sideways energy estimate mentioned above).

Finally, in Section \ref{sec:examples} we illustrate how our theoretical results can be used to reobtain some stability results on the Euler-Bernoulli beam equation, namely the situations considered in \cite{ChenEtAl_1987a} and \cite{GuoHuang_2004}.
In the latter case we encounter a situation where the finite-dimensional controller naturally appears in the modelling of the problem.

\section{Infinite-dimensional Port-Hamiltonian Systems}
\label{sec:infinite_dimensional_phs}

Throughout this paper we use the following notations.
For any Hilbert space $X$ we denote by $\sp{\cdot}{\cdot}$ its inner product (which is linear in the \emph{second} component).
Moreover $\B(X,Y)$ denotes the space of linear and bounded operators $X \rightarrow Y$ where as usual $\B(X) := \B(X,X)$.
For any closed linear operator $A: \dom(A) \subset X \rightarrow X$ we have the resolvent set $\rho(A)$, the spectrum $\sigma(A)$ and write $R(\lambda,A) := (\lambda I - A)^{-1}$ for the resolvent operator and $\sigma_p(A)$ for the point spectrum of $A$.
We investigate port-Hamiltonian systems of order $N \in \N$, given by the partial differential equation
	\begin{equation}
	\frac{\partial x}{\partial t}(t,\z)
	 = \sum_{k=0}^N P_k \frac{\partial^k (\H x)}{\partial \z^k}(t,\z),
	 \quad
	 t \geq 0, \ \z \in (0,1).
	 \label{gencontrsgr-eqn-1}
	\end{equation}
Here $P_k \in \C^{d \times d}$, $k = 0, 1, \ldots, N$, always denotes some complex matrices satisfying the condition
	\begin{equation}
	P_k^*
	 = (-1)^{k-1} P_k,
	 \qquad
	 k \geq 1.
	\end{equation}
(Note that we do not require $P_0$ to be skew-adjoint.)
Moreover we always assume that $P_N$ is invertible.
The \emph{Hamiltonian density matrix function} $\H: (0,1) \rightarrow \C^{d \times d}$ is a measurable function such that there exist $0 < m \leq M$ such that for almost every $\z \in (0,1)$ the matrix $\H(\z)$ is self-adjoint and
	\begin{equation}
	 m \abs{\xi}^2
	  \leq \xi^* \H(\z) \xi
	  \leq M \abs{\xi}^2,
	  \quad
	  \xi \in \C^d.
	\end{equation}
We then say that $\H$ is \emph{uniformly positive}.
In this paper we consider the \emph{energy state space} $X = L_2(0,1;\C^d)$ with the inner product
	\begin{equation}
	 \sp{f}{g}_{\H}
	  := \int_0^1 f^*(\z) \H(\z) g(\z) d\z,
	  \quad f, g \in X. 
	\end{equation}
Note that $\norm{\cdot}_{\H}$ is equivalent to the standard $L_2$-norm $\norm{\cdot}_{L_2}$.

The operator $A_0: \dom(A_0) \subset X \rightarrow X$ corresponding to equation (\ref{gencontrsgr-eqn-1}) is given by
	\begin{align}
	A_0 x
	 &= \sum_{k=0}^N P_k \frac{d^k}{d \z^k} (\H x),
	 \nonumber \\
	\dom(A_0)
	 &= \{x \in X: \H x \in H^{N}(0,1;\C^d) \}.
	 \label{gencontrsgr-eqn-2}
	\end{align}
Thanks to the invertibility of $P_N$ the operator $A_0$ is closed.
\begin{lemma}
\label{lem:equivalent_norms}
The operator $A_0$ is a closed operator
and its graph norm is equivalent to the norm $\norm{\H \cdot}_{H^N}$.
\end{lemma}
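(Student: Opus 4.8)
The plan is to reduce everything to the standard $L_2$- and $H^N$-norms and to prove the two-sided estimate
\[
c\,\norm{\H x}_{H^N} \le \norm{x}_{\H} + \norm{A_0 x}_{\H} \le C\,\norm{\H x}_{H^N}, \qquad x \in \dom(A_0),
\]
from which both the norm equivalence and (by a routine completeness argument) the closedness will follow. Since $\H$ is uniformly positive, multiplication by $\H$ is a bounded isomorphism of $L_2(0,1;\C^d)$ with $\norm{\H u}_{L_2} \le M\norm{u}_{L_2}$ and $\norm{u}_{L_2} \le \tfrac1m \norm{\H u}_{L_2}$, and $\norm{\cdot}_{\H}$ is equivalent to $\norm{\cdot}_{L_2}$; hence it is harmless to replace $\norm{\cdot}_{\H}$ by $\norm{\cdot}_{L_2}$ throughout.

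The upper estimate is immediate: if $\H x \in H^N$, then each $\frac{d^k(\H x)}{d\z^k}$, $0 \le k \le N$, lies in $L_2$, so $\norm{A_0 x}_{L_2} \le \sum_{k=0}^N \norm{P_k}\,\norm{\frac{d^k(\H x)}{d\z^k}}_{L_2} \le C\norm{\H x}_{H^N}$, while $\norm{x}_{L_2} \le \tfrac1m\norm{\H x}_{L_2} \le \tfrac1m\norm{\H x}_{H^N}$.

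The substance is the reverse estimate, which is an elliptic-type bound. First I would solve the defining identity for the top-order derivative: since $P_N$ is invertible,
\[
\frac{d^N(\H x)}{d\z^N} = P_N^{-1}\Big(A_0 x - \sum_{k=0}^{N-1} P_k\frac{d^k(\H x)}{d\z^k}\Big),
\]
so that $\norm{\frac{d^N(\H x)}{d\z^N}}_{L_2} \le \norm{P_N^{-1}}\big(\norm{A_0 x}_{L_2} + \sum_{k=0}^{N-1}\norm{P_k}\,\norm{\frac{d^k(\H x)}{d\z^k}}_{L_2}\big)$. The lower-order derivatives are then controlled by the interpolation (Ehrling) inequality on the bounded interval $(0,1)$: for every $\epsilon > 0$ there is $C_\epsilon$ with $\norm{\frac{d^k u}{d\z^k}}_{L_2} \le \epsilon\norm{\frac{d^N u}{d\z^N}}_{L_2} + C_\epsilon\norm{u}_{L_2}$ for all $u \in H^N(0,1;\C^d)$ and $0 \le k < N$. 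Applying this with $u = \H x$ and choosing $\epsilon$ small enough, the $\frac{d^N(\H x)}{d\z^N}$-contributions on the right can be absorbed into the left-hand side, yielding $\norm{\frac{d^N(\H x)}{d\z^N}}_{L_2} \le C(\norm{A_0 x}_{L_2} + \norm{\H x}_{L_2})$. Feeding this back into the interpolation inequality for the intermediate derivatives and using $\norm{\H x}_{L_2} \le M\norm{x}_{L_2}$ gives $\norm{\H x}_{H^N} \le C(\norm{A_0 x}_{L_2} + \norm{x}_{L_2})$, the desired lower bound.

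The main obstacle is this absorption step: one must ensure the interpolation inequality is genuinely available on the bounded interval for $\C^d$-valued functions and that the constants $\norm{P_k}$, $\norm{P_N^{-1}}$ enter so that a single small $\epsilon$ absorbs all intermediate terms simultaneously; everything else is bookkeeping. Once the equivalence is established, closedness follows: if $x_n \to x$ in $X$ and $A_0 x_n \to y$, then $(x_n)$ is Cauchy in the graph norm, hence in $\norm{\H\cdot}_{H^N}$, so $\H x_n$ converges in $H^N$ to some $v$; comparing with the $L_2$-limit forces $v = \H x \in H^N$, whence $x \in \dom(A_0)$ and $A_0 x = y$.
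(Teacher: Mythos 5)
Your proof is correct. The paper gives no proof of this lemma at all (it only remarks that closedness is ``thanks to the invertibility of $P_N$''), and your argument --- the trivial upper bound from boundedness of the $P_k$, the lower bound by solving for $(\H x)^{(N)}$ via $P_N^{-1}$ and absorbing the intermediate derivatives with the Ehrling/interpolation inequality on $(0,1)$, and closedness deduced from the norm equivalence by completeness of $H^N$ --- is precisely the standard fleshing-out of that remark, so it is essentially the intended approach.
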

Let
	\begin{equation}
	 \Phi: H^N(0,1;\C^d) \rightarrow \C^{2Nd}, \ \Phi(x) = (x(1), \ldots, x^{(N-1)}(1), x(0) \ldots, x^{(N-1)}(0))
	 \nonumber
	\end{equation}
be the \emph{boundary trace operator} and introduce the \emph{boundary port variables} $\left( \begin{smallmatrix} f_{\partial, \H x} \\ e_{\partial, \H x} \end{smallmatrix} \right)$ defined via
	\begin{align}
	\left(  \begin{array}{c} f_{\partial, \H x} \\ e_{\partial, \H x} \end{array} \right)
	 &= \frac{1}{\sqrt{2}} \left( \begin{array}{cc} Q & -Q \\ I & I  \end{array} \right) \Phi(\H x)
	 \nonumber \\
	 \nonumber \\
	Q_{ij}
	 &= \left\{ \begin{array}{ll} (-1)^{j-1} P_{i+j-1}, & i+j \leq N + 1 \\ 0, & \text{else}. \end{array} \right.
	 \label{gencontrsgr-eqn-4}
	\end{align}
Note that the boundary port variables do not depend on the matrix $P_0$.
If $P_0 = - P_0^*$ is skew-adjoint, the boundary port variables determine $\Re \sp{A_0 x}{x}$.

\begin{lemma}
\label{gencontrsgr-lem-4}
Assume $P_0^* = -P_0$.
Then the operator $A_0$ satisfies
	\begin{equation}
	2 \Re \sp{A_0 x}{x}_{\H}
	 = \Re \sp{f_{\partial, \H x}}{e_{\partial, \H x}}_{\C^{2Nd}},
	 \qquad
	 x \in \dom(A_0).
	 \label{gencontrsgr-eqn-7}
	\end{equation}
\end{lemma}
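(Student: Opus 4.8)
The plan is to reduce the sesquilinear expression $2\Re\sp{A_0 x}{x}_{\H}$ to a quadratic form in the boundary traces of $\H x$ by repeated integration by parts, and then to recognise that form as $\Re\sp{f_{\partial,\H x}}{e_{\partial,\H x}}$. Throughout write $e := \H x$, which for $x \in \dom(A_0)$ lies in $H^N(0,1;\C^d)$, so that all traces $e^{(l)}(0), e^{(l)}(1)$ with $0 \le l \le N-1$ exist and every integration by parts below is legitimate; the content of the lemma is therefore purely algebraic. First I would use that $\sp{\cdot}{\cdot}_{\H}$ is linear in the second argument and that $\H x = e$ cancels the weight against $A_0 x = \sum_{k=0}^N P_k e^{(k)}$, to obtain
\[ 2\Re\sp{A_0 x}{x}_{\H} = \int_0^1 \left[(A_0 x)^* e + e^* (A_0 x)\right]\,d\z = \sum_{k=0}^N \int_0^1 \left[(e^{(k)})^* P_k^* e + e^* P_k e^{(k)}\right]\,d\z. \]
The term $k=0$ is where the hypothesis enters: since $P_0^* = -P_0$ the integrand $e^*(P_0^* + P_0)e$ vanishes identically, which is exactly the reason the boundary port variables (which do not see $P_0$) can capture $\Re\sp{A_0 x}{x}_{\H}$.

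Next, for each $k \ge 1$ I would integrate the term $\int_0^1 e^* P_k e^{(k)}\,d\z$ by parts $k$ times, peeling derivatives off $e^{(k)}$. Each step contributes a boundary term $(-1)^l\left[(e^{(l)})^* P_k e^{(k-1-l)}\right]_0^1$, $0 \le l \le k-1$, and leaves the interior integral $(-1)^k\int_0^1 (e^{(k)})^* P_k e\,d\z$. The companion term satisfies $\int_0^1 (e^{(k)})^* P_k^* e\,d\z = (-1)^{k-1}\int_0^1 (e^{(k)})^* P_k e\,d\z$ by the symmetry $P_k^* = (-1)^{k-1} P_k$, and since $(-1)^k + (-1)^{k-1} = 0$ the two interior integrals cancel. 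Summing over $k$ leaves the pure boundary expression
\[ 2\Re\sp{A_0 x}{x}_{\H} = \sum_{k=1}^N \sum_{l=0}^{k-1} (-1)^l \left[(e^{(l)})^* P_k e^{(k-1-l)}\right]_0^1, \]
a Hermitian quadratic form in the traces $e^{(l)}(0), e^{(l)}(1)$, i.e. in $\Phi(\H x)$.

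Finally I would match this with the right-hand side. Writing $\Phi(\H x) = (e_1, e_0)$ with $e_1, e_0 \in \C^{Nd}$ the stacks of the traces at $\z = 1$ and $\z = 0$, the definition gives $f_{\partial,\H x} = \tfrac{1}{\sqrt2} Q(e_1 - e_0)$ and $e_{\partial,\H x} = \tfrac{1}{\sqrt2}(e_1 + e_0)$, whence $\sp{f_{\partial,\H x}}{e_{\partial,\H x}} = \tfrac12 (e_1 - e_0)^* Q^* (e_1 + e_0)$. A short computation using $P_{i+j-1}^* = (-1)^{i+j} P_{i+j-1}$ shows that $Q$ is Hermitian, $Q^* = Q$; consequently $e_1^* Q e_1$ and $e_0^* Q e_0$ are real while the cross terms $e_1^* Q e_0 - e_0^* Q e_1$ are purely imaginary and drop out under $\Re$. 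A reindexing $k = i+j-1$ of the block entries $Q_{ij}$ then matches $e_1^* Q e_1 - e_0^* Q e_0$ term by term with the boundary sum above, the alternating signs in $Q$ reproducing the signs generated by the integration by parts; taking into account the normalising factor $\tfrac{1}{\sqrt2}$ yields the asserted identity.

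The only real obstacle is the sign and index bookkeeping: one must keep the alternating signs produced by the iterated integration by parts consistent with the signs built into the definition of $Q$, and the symmetry $P_k^* = (-1)^{k-1}P_k$ must be invoked twice — once to cancel the interior integrals and once to establish $Q^* = Q$. No analytic difficulty arises, since membership $\H x \in H^N$ already guarantees the existence of all traces and the validity of the integrations by parts.
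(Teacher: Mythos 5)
You should first note that the paper does not prove this lemma at all --- it is stated as background and implicitly imported from Le~Gorrec--Zwart--Maschke \cite{LeGorrecZwartMaschke_2005} --- so your direct verification is the natural (indeed the standard) route, and everything you actually execute is correct: the reduction to $\int_0^1 \left[(A_0x)^*e + e^*(A_0x)\right]d\z$ with $e=\H x$, the vanishing of the $P_0$ term, the $k$-fold integration by parts, the cancellation of the interior integrals via $P_k^*=(-1)^{k-1}P_k$, the Hermiticity $Q^*=Q$, and the imaginarity of the cross terms. The gap is precisely the step you dismiss as bookkeeping and never carry out: the ``term by term'' match. Executing the reindexing, your boundary sum is the Hermitian form of the matrix $M$ with blocks $M_{ij}=(-1)^{i-1}P_{i+j-1}$ ($i+j\le N+1$): the alternating sign produced by the integration by parts attaches to the derivative order of the \emph{conjugated} trace, i.e.\ to the row index, whereas the $Q$ of (\ref{gencontrsgr-eqn-4}) carries $(-1)^{j-1}$ on the \emph{column} index. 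Since $Q=M^T=\overline{M}$, the two quadratic forms coincide on real trace vectors but not on complex ones; on the anti-diagonal blocks with $i+j-1$ even they differ by a sign. Concretely, take $N=2$, $d=1$, $\H\equiv 1$, $P_2=i$, $P_1=P_0=0$ (Schr\"odinger): your boundary sum gives
\begin{equation}
2\Re\sp{A_0x}{x}_{\H}=-2\left[\Im\left(\overline{x}\,x'\right)\right]_0^1,
\qquad\text{while}\qquad
\tfrac12\left(e_1^*Qe_1-e_0^*Qe_0\right)=+\left[\Im\left(\overline{x}\,x'\right)\right]_0^1,
\nonumber
\end{equation}
where $e_1,e_0\in\C^{Nd}$ denote the stacks of traces of $\H x$ at $\z=1$ and $\z=0$. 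So the claimed termwise identification fails in sign on even blocks, and your ``short computation'' would have revealed this.

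There is a second, independent constant-tracking problem: even after replacing $(-1)^{j-1}$ by $(-1)^{i-1}$, your chain yields $2\Re\sp{A_0x}{x}_{\H}=e_1^*Me_1-e_0^*Me_0=2\Re\, f_{\partial,\H x}^*\,e_{\partial,\H x}$, i.e.\ $\Re\sp{A_0x}{x}_{\H}=\Re\, f_{\partial,\H x}^*\,e_{\partial,\H x}$ --- half the constant printed in (\ref{gencontrsgr-eqn-7}). Already for $N=1$, $P_1=1$, $\H\equiv1$ the left-hand side is $\abs{x(1)}^2-\abs{x(0)}^2$ while $\Re\, f_\partial^* e_\partial$ is half of that. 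The printed identity is recovered only if one reads the right-hand side as the $\Sigma$-pairing of the stacked port vector, $\left(\begin{smallmatrix} f_{\partial,\H x}\\ e_{\partial,\H x}\end{smallmatrix}\right)^*\Sigma\left(\begin{smallmatrix} f_{\partial,\H x}\\ e_{\partial,\H x}\end{smallmatrix}\right)=\sp{f_{\partial,\H x}}{e_{\partial,\H x}}+\sp{e_{\partial,\H x}}{f_{\partial,\H x}}$, which is presumably the meaning of the subscript $\C^{2Nd}$. So your approach is sound and all completed steps are valid, but the proof as written does not close: the asserted sign match between $Q$ and the integration-by-parts signs is false over $\C$ (it holds only after transposition, i.e.\ for real data, or with the corrected convention $Q_{ij}=(-1)^{i-1}P_{i+j-1}$), and the factor of $2$ must be accounted for by the $\Sigma$-pairing interpretation rather than by the $\tfrac{1}{\sqrt2}$ normalisation alone.
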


\subsection{Generation of Contraction Semigroups}
\label{sec:semigroup_generation}

Since we did not impose any boundary conditions in equation (\ref{gencontrsgr-eqn-2}), we could not expect $A_0$ to generate a $C_0$-semigroup
(in fact, $\sigma_p(A_0) = \C$).
However, for suitable boundary conditions, defining a subspace $\dom(A) \subset \dom(A_0)$ the restricted operator $A = A_0|_{D(A)}$ has the generator property.
For this purpose, let $W \in \C^{Nd \times 2Nd}$ be a full rank matrix and define the operator $A$ by
	\begin{align}
	A
	 &= A_0|_{D(A)},
	 \nonumber \\
	\dom(A)
	 &= \{x \in D(A_0): W \left( \begin{array}{c} f_{\partial, \H x} \\ e_{\partial, \H x} \end{array} \right) = 0\}.
	 \label{gencontrsgr-eqn-3}
	\end{align}

Note that thanks to the invertibility of $P_N$, the matrix $\left( \begin{smallmatrix} Q & -Q \\ I & I  \end{smallmatrix} \right)$ is invertible (see Lemma 3.4 in \cite{LeGorrecZwartMaschke_2005}) and thus the condition $W \left( \begin{smallmatrix} f_{\partial, \H x} \\ e_{\partial, \H x} \end{smallmatrix} \right) = 0$ may be equivalently expressed as $W' \Phi(\H x) = 0$ for a suitable matrix $W'$.

Using the Lumer-Phillips Theorem II.3.15 in \cite{EngelNagel_2000} the generators of contraction semigroups have been characterized by a simple matrix condition or alternatively by dissipativity of the operator.
Note that usually the hard part of proving that an operator $A$ generates a contraction semigroup is the range condition $\ran (\lambda I - A) = X$ for some $\lambda > 0$.

\begin{theorem}
\label{gencontrsgr-thm-7}
The following are equivalent.
	\begin{enumerate}
	 \item $A$ generates a contraction $C_0$-semigroup,
	 \item $A$ is dissipative, i.e. $\Re \sp{Ax}{x}_{\H} \leq 0$, \quad for all $x \in D(A)$,
	 \item $W \Sigma W^* \geq 0$ \ and \ $\Re P_0 \leq 0$
	\end{enumerate}
where $\Sigma = \left( \begin{smallmatrix} 0 & I \\ I & 0 \end{smallmatrix} \right) \in \C^{2d \times 2d}$.
In that case $A$ has compact resolvent.
\end{theorem}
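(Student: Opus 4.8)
The plan is to verify the Lumer--Phillips conditions (Theorem II.3.15 in \cite{EngelNagel_2000}). The implication $(1)\Rightarrow(2)$ is immediate, since the generator of a contraction semigroup is dissipative. The real content lies in the equivalence $(2)\Leftrightarrow(3)$ and, for $(2)\Rightarrow(1)$, in the range condition $\ran(\lambda_0 I-A)=X$; density of $\dom(A)$ is clear because $\H^{-1}C_c^\infty(0,1;\C^d)\subset\dom(A)$ is dense in $X$, and $A$ is closed by Lemma \ref{lem:equivalent_norms}.

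For $(2)\Leftrightarrow(3)$ I would first remove the restriction that $P_0$ be skew-adjoint by splitting $P_0=\tfrac12(P_0-P_0^*)+\Re P_0$. Applying Lemma \ref{gencontrsgr-lem-4} to the operator built from the skew-adjoint part and evaluating the self-adjoint contribution directly, one arrives at
	\begin{equation}
	 \Re\sp{A_0 x}{x}_{\H}
	  =\tfrac14\,v^*\Sigma v+\int_0^1(\H x)^*(\Re P_0)(\H x)\,d\z,
	  \qquad v:=\left(\begin{smallmatrix} f_{\partial,\H x}\\ e_{\partial,\H x}\end{smallmatrix}\right),
	 \label{eqn:plan-identity}
	\end{equation}
for every $x\in\dom(A_0)$, where I used $\Re\sp{f_{\partial,\H x}}{e_{\partial,\H x}}=\tfrac12 v^*\Sigma v$. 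Since $\Phi$ is surjective and the matrix $\left(\begin{smallmatrix}Q&-Q\\ I&I\end{smallmatrix}\right)$ is invertible (Lemma 3.4 in \cite{LeGorrecZwartMaschke_2005}), the assignment $x\mapsto v$ maps $\dom(A)$ onto $\ker W$. Testing (\ref{eqn:plan-identity}) with $x$ such that $\H x$ is compactly supported (so $v=0$) isolates the volume term and forces $\Re P_0\le0$; realising a prescribed $v\in\ker W$ by a boundary-layer profile whose $L_2$-mass tends to $0$ isolates the boundary term and forces $v^*\Sigma v\le0$ on $\ker W$. The only remaining point is the linear-algebra equivalence ``$v^*\Sigma v\le0$ for all $v\in\ker W$'' $\Leftrightarrow$ $W\Sigma W^*\ge0$, which I would settle using the full rank of $W$ together with the inertia $(Nd,Nd)$ of $\Sigma$, as in \cite{LeGorrecZwartMaschke_2005}. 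Reading (\ref{eqn:plan-identity}) backwards gives $(3)\Rightarrow(2)$.

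The step I expect to require the most care is the range condition, exactly the point flagged before the theorem. Fix $\lambda_0>0$. Writing $e=\H x$ and using that $P_N$ is invertible, the equation $(\lambda_0 I-A_0)x=y$ becomes a regular $N$-th order linear ODE system for $e$, whence $\ran(\lambda_0 I-A_0)=X$ and $\dim\ker(\lambda_0 I-A_0)=Nd$. The port-Hamiltonian structure now lets dissipativity upgrade injectivity to surjectivity: given $y\in X$ pick $x_p\in\dom(A_0)$ with $(\lambda_0 I-A_0)x_p=y$ and seek $x=x_p+x_h$ with $x_h\in\ker(\lambda_0 I-A_0)$ and $Wv(x)=0$, i.e. $Wv(x_h)=-Wv(x_p)$. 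The linear map $x_h\mapsto Wv(x_h)$ between the two $Nd$-dimensional spaces $\ker(\lambda_0 I-A_0)$ and $\C^{Nd}$ is injective, because $Wv(x_h)=0$ would place $x_h\in\dom(A)\cap\ker(\lambda_0 I-A)$, forcing $x_h=0$ by dissipativity; hence it is bijective and $x$ exists. This yields $\ran(\lambda_0 I-A)=X$ and, with $(2)$, the generation statement $(1)$. Finally, compactness of the resolvent follows from Lemma \ref{lem:equivalent_norms}: a graph-norm bounded sequence has $\H x_n$ bounded in $H^N(0,1;\C^d)$, so by the compact embedding $H^N\hookrightarrow L_2$ and boundedness of multiplication by $\H^{-1}$ the inclusion $\dom(A)\hookrightarrow X$ is compact; composing with the bounded operator $R(\lambda,A)\colon X\to\dom(A)$, available since $\rho(A)\neq\emptyset$, shows $R(\lambda,A)$ is compact.
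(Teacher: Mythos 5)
Your proposal is correct, and while it shares the Lumer--Phillips frame and the boundary-layer localization trick with the paper, it is organized quite differently. The paper reduces the general case to skew-adjoint $P_0$ by perturbation: it sets $\tilde A := A + G_0\H$ with $G_0 = -\tfrac{1}{2}(P_0+P_0^*)$, cites Theorem 4.1 of \cite{LeGorrecZwartMaschke_2005} for the equivalence of 1.\ and 3.\ in the skew-adjoint case, transfers dissipativity from $A$ to $\tilde A$ by a contradiction argument (Lemma \ref{lem:auxiliary-dissipativity}, which is precisely your boundary-layer profile: unchanged boundary traces, vanishing $L_2$-mass), and recovers generation for $A$ via the bounded and dissipative perturbation theorems of \cite{EngelNagel_2000}; the range condition is delegated to Theorem 7.2.4 of \cite{JacobZwart_2012} ``with obvious modifications.'' You instead treat general $P_0$ head-on through the identity $\Re \sp{A_0 x}{x}_{\H} = \tfrac14 v^*\Sigma v + \int_0^1 (\H x)^*(\Re P_0)(\H x)\,d\z$, which is legitimate because the boundary port variables do not depend on $P_0$ (so Lemma \ref{gencontrsgr-lem-4} applies to the skew part and the factor $\tfrac14$ is right, since $\Re\sp{f_{\partial,\H x}}{e_{\partial,\H x}} = \tfrac12 v^*\Sigma v$), and your separation of volume and boundary contributions by compactly supported resp.\ boundary-layer test functions gives 2.$\Leftrightarrow$3.\ directly, modulo the linear-algebra fact that for full-rank $W$ and $\Sigma$ of inertia $(Nd,Nd)$ one has $W\Sigma W^* \geq 0$ iff $v^*\Sigma v \leq 0$ on $\ker W$ --- which is true (in an eigenbasis of $\Sigma$, full rank plus either condition forces the block $W_+$ to be invertible and both conditions become $W_- W_-^* \leq W_+ W_+^*$) and is reasonably cited to \cite{LeGorrecZwartMaschke_2005}, since the paper leans on the same source. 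Your main added value is that you actually prove the two things the paper leaves out: the range condition (solving $(\lambda_0 I - A_0)x = y$ as a regular ODE for $e = \H x$, with Carath\'eodory theory giving $\H x_p \in H^N$ as needed, noting $\dim\ker(\lambda_0 I - A_0) = Nd$, and using dissipativity to make $x_h \mapsto W v(x_h)$ injective, hence bijective, between spaces of equal dimension $Nd$) and the compact resolvent (via Lemma \ref{lem:equivalent_norms} and compactness of the embedding $H^N(0,1;\C^d) \hookrightarrow L_2(0,1;\C^d)$). What the perturbation route buys the paper is brevity --- 1.$\Leftrightarrow$3.\ comes for free from the cited skew-adjoint theorem --- while your route buys a single transparent energy identity valid for general $P_0$ and a self-contained proof of exactly the step the paper itself flags as the hard one.
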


Note that this result is a combination of Theorem 7.2.4 in \cite{JacobZwart_2012} where the authors focus only on the case $N = 1$ and Theorem 4.1 in \cite{LeGorrecZwartMaschke_2005} where the general case of $N$-th order Port-Hamiltonian systems is treated for the equivalence of parts 1.\ and 2.
However in both cases the authors only treat the case $P_0 = - P_0^*$.
For the general case where $P_0^* \not= - P_0$ is not skew-adjoint we use a perturbation argument.

\begin{proof}
Let us first assume that $P_0 = - P_0^*$ is skew-adjoint.
The equivalence of conditions 1.\ and 3.\ is due to Theorem 4.1 in \cite{LeGorrecZwartMaschke_2005}.
The implication 1.\ $\Rightarrow$ 2.\ results from the Lumer-Phillips Theorem II.3.15 in \cite{EngelNagel_2000}.
For the implication 2.\ $\Rightarrow$ 1.\ one only needs to show the range condition $\ran (I - A) = X$ (thanks to the Lumer-Phillips result).
This can be done similar as in the proof of Theorem 7.2.4 in \cite{JacobZwart_2012} (with obvious modifications).
We leave the details to the interested reader.

Let us concentrate on the situation where $P_0 \not= -P_0^*$,
i.e.
	\begin{equation}
	G_0
	 := - \frac{1}{2} ( P_0 + P_0^*) \not= 0
	 \nonumber
	\end{equation}
Of course, the implication $1.\ \Rightarrow 2.$\ follows by the Lumer-Phillips Theorem II.3.15 in \cite{EngelNagel_2000}.
Next we show that 2.\ implies 1.\
Let us write $\tilde A := A + G_0 \H$.
If we can show that $\tilde A$ generates a contractive $C_0$-semigroup, then also $A$ generates a $C_0$-semigroup by the Bounded Perturbation Theorem III.1.3 in \cite{EngelNagel_2000} which then is contractive since its generator is dissipative.
Since $\tilde A$ is a port-Hamiltonian operator with skew-adjoint $\tilde P_0$ it suffices to prove dissipativity of $\tilde A$.
\emph{Assume} $\tilde A$ were not dissipative.
Then by Lemma \ref{lem:auxiliary-dissipativity} below there exists a $X$-null sequence $(x_n)_{n \geq 1}$ in $\dom(A)$ with $\Re \sp{\tilde A x_n}{x_n}_{\H} = 1$
so
	\begin{equation}
	0
	 \geq \Re \sp{Ax_n}{x_n}_{\H}
	 = \Re \sp{\tilde Ax_n}{x_n}_{\H} - \sp{G_0 \H x_n}{x_n}_{\H}
	 \rightarrow 1,
	\end{equation}
which leads to a contradiction.
Hence $\tilde A$ generates a contraction semigroup and so does $A$.
Thus 1.\ and 2.\ are equivalent also in this case.
Further we obtain that if 1.\ or 2.\ holds then $\tilde A$ generates a $C_0$-semigroup, so $W \Sigma W^* \geq 0$.
Moreover for any $\H x \in C_c^{\infty}(0,1;\C^d) \subset \dom(A)$ we obtain
	\begin{equation}
	\Re \sp{Ax}{x}_{\H} = \Re \sp{P_0 \H x}{x}_{\H} = \Re \sp{P_0 \H x}{\H x}_{L_2} \leq 0
	\end{equation}
by 2.\ and hence choosing $\H x = \phi \xi$ for $\phi \in C_c^{\infty}(0,1;\C)$ and $\xi \in \C^d$ it follows $\Re P_0 \leq 0$, so 3.\ holds.
Finally, from 3.\ it follows that $\tilde A$ (as introduced above) generates a contraction semigroup and hence does $A = \tilde A - G_0 \H$ by Theorem III.2.7 in \cite{EngelNagel_2000} and the dissipativity of $-G_0 \H$.
\end{proof}

In the proof we used the following.

\begin{lemma}
\label{lem:auxiliary-dissipativity}
Let $\Re P_0 = 0$.
If $A$ is not dissipative, then there exists a sequence $(x_n)_{n \geq 1}$ in $\dom(A)$ with $\Re \sp{A x_n}{x_n}_{\H} = 1$ and $x_n$ converging to $0$ in $X$.
\end{lemma}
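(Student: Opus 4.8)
The plan is to exploit that, under the hypothesis $\Re P_0 = 0$ (equivalently $P_0^* = -P_0$), the quantity $\Re \sp{Ax}{x}_{\H}$ is a pure \emph{boundary} functional: since $A = A_0|_{\dom(A)}$, Lemma \ref{gencontrsgr-lem-4} gives $2\Re \sp{Ax}{x}_{\H} = \Re \sp{f_{\partial, \H x}}{e_{\partial, \H x}}_{\C^{2Nd}}$ for every $x \in \dom(A)$, and the right-hand side depends on $x$ only through the boundary trace $\Phi(\H x)$. The strategy is therefore to take a single witness of non-dissipativity and \emph{localize} it near the endpoints $\{0,1\}$, shrinking its $X$-mass to zero while leaving its boundary traces, and hence both the domain constraint and the value of the dissipation form, untouched.

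First I would invoke non-dissipativity to obtain $x_0 \in \dom(A)$ with $\Re \sp{Ax_0}{x_0}_{\H} > 0$; since $x_0 \mapsto c x_0$ scales this form by $\abs{c}^2$ and $\dom(A)$ is a subspace, I may normalize so that $\Re \sp{Ax_0}{x_0}_{\H} = 1$. Set $y := \H x_0 \in H^N(0,1;\C^d)$, which by the one-dimensional Sobolev embedding $H^N \hookrightarrow C^{N-1}[0,1]$ has well-defined traces $\Phi(y)$. Then I would fix a sequence $\delta_n \downarrow 0$ and smooth scalar cutoffs $\phi_n$ with $0 \leq \phi_n \leq 1$, $\phi_n \equiv 1$ on $[0,\delta_n] \cup [1-\delta_n,1]$ and $\phi_n \equiv 0$ on $[2\delta_n, 1-2\delta_n]$, and define $y_n := \phi_n y \in H^N(0,1;\C^d)$ together with $x_n := \H^{-1} y_n \in \dom(A_0)$.

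Because $\phi_n \equiv 1$ on a full neighbourhood of each endpoint, $y_n$ coincides with $y$ there, so $y_n^{(k)}(0) = y^{(k)}(0)$ and $y_n^{(k)}(1) = y^{(k)}(1)$ for $k = 0, \ldots, N-1$, i.e. $\Phi(\H x_n) = \Phi(y_n) = \Phi(y) = \Phi(\H x_0)$. This has two consequences. The boundary port variables of $x_n$ equal those of $x_0$, whence $W \left( \begin{smallmatrix} f_{\partial, \H x_n} \\ e_{\partial, \H x_n} \end{smallmatrix} \right) = 0$ and thus $x_n \in \dom(A)$; and, by Lemma \ref{gencontrsgr-lem-4}, $\Re \sp{Ax_n}{x_n}_{\H} = \Re \sp{Ax_0}{x_0}_{\H} = 1$ for all $n$. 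To finish I would check the null convergence: $\supp y_n \subset [0,2\delta_n] \cup [1-2\delta_n,1]$ has measure tending to $0$, so by absolute continuity of the Lebesgue integral $\norm{y_n}_{L_2}^2 \leq \int_0^{2\delta_n} \abs{y}^2 + \int_{1-2\delta_n}^1 \abs{y}^2 \to 0$; since $\H^{-1}$ is bounded (uniform positivity of $\H$) and $\norm{\cdot}_{\H}$ is equivalent to $\norm{\cdot}_{L_2}$, this gives $\norm{x_n}_X \to 0$.

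I expect the genuine content to lie entirely in the simultaneous bookkeeping of the third step: the cutoff must be \emph{identically} $1$ on a whole neighbourhood of each endpoint, not merely agree with $y$ at finitely many points, so that all $N$ boundary traces at $0$ and at $1$ are preserved. This is precisely what guarantees at once that $x_n$ stays in $\dom(A)$ and that the dissipation form retains the value $1$, while the shrinking support forces the $X$-norm to $0$. No further estimate is needed, as the form is boundary-supported once $P_0$ is skew-adjoint; the only mild care is ensuring $\phi_n y \in H^N$, which holds since $\phi_n$ is smooth and $y \in H^N$.
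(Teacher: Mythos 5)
Your proposal is correct and takes essentially the same route as the paper's own proof: both arguments normalize a witness of non-dissipativity and then cut it off near the endpoints $\{0,1\}$ so that all $N$ boundary traces of $\H x$ at both ends are preserved, which simultaneously keeps $x_n \in \dom(A)$ and, via the boundary representation of the form in Lemma \ref{gencontrsgr-lem-4}, freezes $\Re \sp{A x_n}{x_n}_{\H} = 1$ while the shrinking support drives $\norm{x_n}_{X} \to 0$. The only cosmetic difference is that you realize the localization by explicit smooth cutoffs $\phi_n$ multiplying $y = \H x_0$ (and spell out the scaling normalization), whereas the paper directly prescribes functions $y_n \in H^N$ agreeing with $\H x$ on neighbourhoods of the endpoints and vanishing in the middle.
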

\begin{proof}
Let $x \in \dom(A)$ with $\Re \sp{A x}{x}_{\H} = 1$.
Now for any $n \in \N$ let $y_n \in H^N(0,1;\C^d)$ be such that $\norm{y_n}_{L_{\infty}} \leq 2 \norm{\H x}_{L_{\infty}}$ and
	\begin{equation}
	 y_n(\z)
	  = \left\{ 
	   \begin{array}{ll}
            (\H x)(\z), & \z \in (0,\frac{1}{2n}) \cup (1 - \frac{1}{2n}, 1) \\
            0, & \z \in (\frac{1}{n}, 1 - \frac{1}{n})
           \end{array}
           \right.
	\end{equation}
Then for $x_n := \H^{-1} y_n$ we obtain
	\begin{equation}
	\norm{x_n}_{L_2}^2
	 \leq c \int_{(0,1/n) \cup (1-1/n,1)} \abs{x(\z)}^2 d\z
	 \xrightarrow{n \rightarrow \infty} 0
	\end{equation}
and
	\begin{equation}
	e_{\partial, \H x_n} = e_{\partial, \H x},
	 \quad
	 f_{\partial, \H x_n} = f_{\partial, \H x},
	 \quad
	 n \in \N,
	\end{equation}
so consequently $\Re \sp{\tilde A x_n}{x_n}_{\H} = \Re \sp{\tilde Ax}{x}_{\H} = 1$ for all $n \in \N$.
\end{proof}

\subsection{Sufficient Conditions for Stability}
\label{subsec:properties}

Our main tools to deduce stability results are the following two theorems.

\begin{theorem}[Asymptotic Stability]
\label{thm:Arendt-Batty}
Let $B$ generate a bounded $C_0$-semigroup $(S(t))_{t\geq0}$ on a Banach space $Y$ and assume that $\sigma_r(B) \cap i \R = \emptyset$.
If $\sigma(B) \cap i\R$ is countable, then $(S(t))_{t\geq0}$ is asymptotically stable.
\end{theorem}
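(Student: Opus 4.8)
This is the Arendt--Batty--Lyubich--Phong theorem, so my plan is to recognise it as such and reduce the stated hypotheses to the form available in the cited literature. The first step is to translate the residual-spectrum condition into a statement about the adjoint: for a closed, densely defined operator $B$ one has $\lambda \in \sigma_r(B)$ precisely when $\ran (\lambda I - B)$ fails to be dense in $Y$, and by the standard range--kernel duality this happens exactly when $\bar\lambda \in \sigma_p(B^*)$. Since the imaginary axis is invariant under complex conjugation, the assumption $\sigma_r(B) \cap i\R = \emptyset$ is therefore equivalent to $\sigma_p(B^*) \cap i\R = \emptyset$, i.e. the adjoint has no eigenvalues on $i\R$. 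Together with the countability of $\sigma(B) \cap i\R$ and the boundedness of $(S(t))_{t \geq 0}$ these are exactly the hypotheses under which asymptotic stability is asserted in \cite{ArendtBatty_1988} and \cite{LyubichPhong_1988}, so at this point I would simply invoke that result. Note that asymptotic stability forces $\sigma_p(B) \cap i\R = \emptyset$ as well, consistent with the hypotheses: a purely imaginary eigenvalue of $B$ would give an orbit $S(t)x = e^{i\eta t} x$ of constant norm.

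If a self-contained argument is wanted, I would reconstruct the proof along the following lines. Set $Y_0 := \{x \in Y : \norm{S(t)x} \to 0\}$, a closed, $S(t)$-invariant subspace, and aim to show $Y_0 = Y$ by eliminating the boundary spectrum $\Gamma := \sigma(B) \cap i\R$. Because $\Gamma$ is countable it is scattered, so its Cantor--Bendixson derivatives $\Gamma^{(\alpha)}$ decrease transfinitely to the empty set. The engine is an isolated-point lemma: if $i\eta$ is isolated in $\sigma(B)$, then for a bounded semigroup it is a first-order pole of the resolvent $R(\cdot,B)$ (boundedness excludes higher-order poles and essential singularities), so the associated Riesz projection has range consisting of eigenvectors of $B$; dualising, a nonzero such projection would produce an eigenvector of $B^*$ for $i\eta$, contradicting $\sigma_p(B^*) \cap i\R = \emptyset$. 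Hence the projection vanishes and $i\eta \in \rho(B)$. Peeling off the isolated points of $\Gamma$, then of $\Gamma'$, and so on through the transfinite hierarchy, empties the boundary spectrum, after which a Tauberian/resolvent-boundary argument (the countable-spectrum Tauberian theorem, in the spirit of Gearhart's Theorem \ref{thm:gearhart}) yields $S(t)x \to 0$ for every $x$.

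The routine part is the translation in the first paragraph between $\sigma_r(B)$ and $\sigma_p(B^*)$; the genuine difficulty lies entirely in the abstract theorem itself. Concretely, the main obstacles are the isolated-point lemma -- which crucially uses boundedness both to force first-order poles and to convert residues into adjoint eigenvectors -- and the transfinite induction on the Cantor--Bendixson derivatives that turns mere countability of $\Gamma$ into the vanishing of the whole boundary spectrum, capped by the Tauberian step. Since all of this is classical, in the paper I would keep the proof to the reduction of the first paragraph and cite \cite{ArendtBatty_1988,LyubichPhong_1988}.
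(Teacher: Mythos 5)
Your proposal matches the paper exactly: the paper proves this theorem by citing Stability Theorem 2.4 of \cite{ArendtBatty_1988} (and \cite{LyubichPhong_1988}), after noting---just as in your first paragraph---that $\sigma_r(B)$ coincides with the point spectrum of the adjoint, so the hypothesis $\sigma_r(B)\cap i\R=\emptyset$ is the no-adjoint-eigenvalues condition of the cited result. Your supplementary sketch of the classical argument (first-order poles at isolated boundary spectral points, vanishing Riesz projections via the adjoint, Cantor--Bendixson induction plus a Tauberian step) is a correct outline but goes beyond what the paper records.
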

Here $\sigma_r(B) := \{ \lambda \in \C: \ran (\lambda I - B) \ \text{not dense in} \ Y\}$ denotes the \emph{residual spectrum} of $B$ which coincides with the point spectrum of the adjoint operator $B'$.
\begin{proof}
See Stability Theorem 2.4 in \cite{ArendtBatty_1988} (or the theorem in \cite{LyubichPhong_1988}).
\end{proof}

Note that in particular for generators $B$ with compact resolvent we have asymptotic stability if and only if $\sigma_p(B) \subset \C_0^-:= \{\lambda \in \C: \Re \lambda < 0\}$.
The second result requires Hilbert space structure.

\begin{theorem}[Exponential Stability]
\label{thm:gearhart}
Let $B$ generate a bounded $C_0$-semigroup $(S(t))_{t\geq0}$ on a Hilbert space $Y$.
Then $(S(t))_{t\geq0}$ is exponentially stable
if and only if
	\begin{equation}
	 \sigma(B)
	  \subseteq \C_0^-,
	 \quad
	 \sup_{\omega \in \R} \norm{R(i\omega,B)} < +\infty.
	 \nonumber
	\end{equation}
\end{theorem}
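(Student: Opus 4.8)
The stated equivalence is the Gearhart--Pr\"uss theorem (\cite{Gearhart_1978}, \cite{Pruss_1984}); since a reference is available I only indicate the argument I would give. The direction ``exponentially stable $\Rightarrow$ resolvent condition'' is elementary: if $\norm{S(t)} \leq M e^{\omega t}$ with $\omega < 0$, then the growth bound of $(S(t))_{t\geq0}$ is negative, so $\sigma(B) \subseteq \C_0^-$, and for $\Re \lambda \geq 0$ the Laplace representation $R(\lambda, B) = \int_0^\infty e^{-\lambda t} S(t) \, dt$ gives $\norm{R(\lambda, B)} \leq \int_0^\infty M e^{(\omega - \Re \lambda) t} \, dt \leq M / \abs{\omega}$; restricting to $\lambda = i\eta$, $\eta \in \R$, yields the uniform bound on the imaginary axis.

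For the converse I would proceed in three steps, the Hilbert space structure being essential throughout. \emph{Step 1 (strip estimate).} Set $M_0 := \sup_{\eta \in \R} \norm{R(i\eta, B)}$. A Neumann series centered at $i\eta$ shows that $R(\lambda, B)$ exists with $\norm{R(\lambda, B)} \leq 2 M_0$ whenever $\abs{\Re \lambda} < 1/(2M_0)$; together with the bound $\norm{R(\lambda, B)} \leq M/\Re \lambda$ for $\Re \lambda > 0$ (boundedness of the semigroup) this produces $\delta > 0$ with $\sup_{\Re \lambda > -\delta} \norm{R(\lambda, B)} =: C_0 < \infty$. \emph{Step 2 (Plancherel).} For $x \in Y$ and $\epsilon > 0$ the map $t \mapsto e^{-\epsilon t} S(t) x$ (extended by $0$ to $t < 0$) lies in $L_2(\R; Y)$ with Fourier transform $\eta \mapsto R(\epsilon + i\eta, B)x$, so Plancherel's theorem gives $\int_0^\infty e^{-2\epsilon t} \norm{S(t)x}^2 \, dt = \frac{1}{2\pi} \int_\R \norm{R(\epsilon + i\eta, B)x}^2 \, d\eta$. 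For $x \in \dom(B)$ the identity $R(\lambda, B)x = \lambda^{-1}(x + R(\lambda, B)Bx)$ yields $O(\abs{\eta}^{-1})$ decay, so the strip bound makes the right-hand side finite uniformly in $\epsilon$; letting $\epsilon \downarrow 0$ (monotone convergence on the left) shows $S(\cdot)x \in L_2(0, \infty; Y)$. \emph{Step 3 (Datko).} I would upgrade square-integrability from the dense subspace $\dom(B)$ to all of $Y$ and then invoke Datko's theorem (\cite{EngelNagel_2000}): a bounded $C_0$-semigroup for which $\int_0^\infty \norm{S(t)x}^2 \, dt < \infty$ for every $x$ is exponentially stable.

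The main obstacle is precisely this passage from the frequency-domain resolvent bound to time-domain square-integrability of all orbits: this is where Hilbert space enters through Plancherel's theorem (the implication genuinely fails on general Banach spaces), and where one must carefully interchange the limit $\epsilon \downarrow 0$ and extend the estimate off the core $\dom(B)$. An alternative I would keep in reserve is the contour-shift proof: for sufficiently smooth $x$ represent $S(t)x = \frac{1}{2\pi i} \int_{\gamma - i\infty}^{\gamma + i\infty} e^{\lambda t} R(\lambda, B)x \, d\lambda$ and push the contour to $\Re \lambda = -\delta/2$, using holomorphy and the uniform bound of the resolvent on the strip to extract the decay factor $e^{-\delta t/2}$; there the difficulty shifts to justifying the contour shift and the convergence of the integral at $\pm i\infty$.
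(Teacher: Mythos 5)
Note first that the paper's own ``proof'' of this theorem is a bare citation to Theorem 4 and Corollary 5 of Pr\"uss, so your opening sentence already matches the paper exactly; everything after it is extra material, and that is where the trouble lies. The elementary direction and your Steps 1--2 are correct as stated: the Neumann-series strip estimate is standard, and Plancherel (valid because $Y$ is a Hilbert space) together with the identity $R(\lambda,B)x = \lambda^{-1}(x + R(\lambda,B)Bx)$ does give $\int_0^\infty \norm{S(t)x}^2\,dt < \infty$ for $x \in \dom(B)$. But the constant you obtain this way is of the form $K\left(\norm{x}^2 + \norm{Bx}^2\right)$, i.e.\ a \emph{graph-norm} bound, and Step 3 as written then fails: Datko's theorem needs $\int_0^\infty \norm{S(t)x}^2\,dt < \infty$ for \emph{every} $x \in Y$, equivalently (by the closed graph theorem) a bound $C\norm{x}^2$ uniform in the $Y$-norm, and a graph-norm estimate on the dense core does not extend by density. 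Quantitatively, your estimate only yields $t\norm{S(t)x}^2 \leq M^2 \int_0^t \norm{S(s)x}^2\,ds \leq M^2 K \left(\norm{x}^2+\norm{Bx}^2\right)$, i.e.\ $\norm{S(t)R(1,B)} = O(t^{-1/2})$, which is polynomial stability --- strictly weaker than the exponential stability you must conclude. That square-integrability on a dense set alone is genuinely insufficient is shown by the multiplication semigroup $(S(t)f)(s) = e^{-ts}f(s)$ on $L_2(0,1)$: bounded, with $\int_0^\infty\norm{S(t)f}^2\,dt<\infty$ for a dense set of $f$, yet not exponentially stable (of course it violates the resolvent hypothesis, but it shows density cannot carry the load).

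The standard repair avoids the core $\dom(B)$ entirely. By the resolvent identity, $R(\epsilon+i\eta,B)x = \left(I + (1-\epsilon)R(\epsilon+i\eta,B)\right)R(1+i\eta,B)x$, so your strip bound $C_0$ gives $\norm{R(\epsilon+i\eta,B)x} \leq (1+C_0)\norm{R(1+i\eta,B)x}$ for all $x \in Y$ and $0<\epsilon<1$; applying Plancherel to $t \mapsto e^{-t}S(t)x$ (which is $L_2$ for \emph{every} $x$, no domain condition needed) gives $\int_\R \norm{R(1+i\eta,B)x}^2\,d\eta \leq \pi M^2 \norm{x}^2$. Feeding this back through your Step-2 Plancherel identity and letting $\epsilon \downarrow 0$ by monotone convergence yields $\int_0^\infty \norm{S(t)x}^2\,dt \leq \tfrac{1}{2}(1+C_0)^2 M^2 \norm{x}^2$ for all $x \in Y$, after which Datko applies --- or one finishes directly, since the uniform bound now gives $\norm{S(t)} \leq M(1+C_0)M\,t^{-1/2} \rightarrow 0$. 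With this substitution your outline becomes the standard proof of Gearhart--Pr\"uss; your contour-shift alternative suffers from the same defect, since the decay estimate there is also first obtained only on a core and faces the identical extension problem.
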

\begin{proof}
See Theorem 4 and Corollary 5 in \cite{Pruss_1984}.
\end{proof}

\begin{remark}
The uniform boundedness of the resolvent on $i \R$ in Theorem \ref{thm:gearhart} is equivalent to the condition
	\begin{equation}
         \left.
          \begin{array}{l}
           (x_n,\beta_n) \subset \dom(B) \times \R \\
           \sup_{n \in \N} \norm{x_n} < + \infty, \
           \abs{\beta_n} \rightarrow \infty \\
           B x_n - i \beta_n x_n \rightarrow 0
          \end{array}
	 \right\}
         \Longrightarrow \
         x_n \rightarrow 0.
	\end{equation}
\end{remark}

For the moment let $Y$ be any Hilbert space.
The following definition enables us to lift stability results to hybrid systems which we investigate later on.

\begin{definition}
Let a linear operator $B: \dom(B) \subset Y \rightarrow Y$ be given.
We say that a function $f: \dom(B) \rightarrow \R_+$ has the property
	\begin{itemize}
	 \item ASP (for the operator $B$) if for all $\beta \in \R$ and $x \in \dom(B)$
		\begin{equation}
		i \beta x = Bx
		 \ \text{and} \
		f(x) = 0
		 \quad
		 \Rightarrow
	 	 \quad
		x = 0,
		\label{ASP}
		\end{equation}
	\item AIEP (for the operator $B$) if for all sequences $(x_n,\beta_n)_{n \geq 1} \subset \dom(B) \times \R$ with $\sup_{n \in \N} \norm{x_n} < + \infty$ and $\abs{\beta_n} \rightarrow + \infty$
		\begin{equation}
		i \beta_n x_n - B x_n \rightarrow 0
		 \ \text{and} \
		f(x_n) \rightarrow 0
		 \quad
		 \Rightarrow
		 \quad
		x_n \rightarrow 0,
		 \label{AIEP}
		\end{equation}
	\item ESP (for the operator $B$) if it has properties ASP and AIEP.
	\end{itemize}
\end{definition}

Note the following property which easily may be verified using the above definition.

\begin{lemma}
\label{lem:properties}
Let $B \subset B_0$ be linear operators,
i.e.
	\begin{equation}
	\dom(B) \subset \dom(B_0),
	 \quad
	 B_0|_{\dom(B)} = B
	 \nonumber
	\end{equation}
and $f: \dom(B) \rightarrow \R_+$ and $f_0: \dom(B_0) \rightarrow \R_+$ with $\kappa f_0|_{\dom(B_0)} \leq f$ for some $\kappa > 0$.
If $f_0$ has the property ASP or AIEP or ESP (for the operator $B_0$),
then also $f$ has the property ASP or AIEP or ESP (for the operator $B$), respectively.
\end{lemma}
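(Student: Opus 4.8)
The plan is to verify each of the three properties directly from the definitions, leaning on two elementary observations. The first is that $B_0$ restricts to $B$ on $\dom(B)$, so for any $x \in \dom(B)$ we have $B_0 x = B x$; consequently every (approximate) eigenvalue relation for $B$ on $\dom(B)$ is simultaneously such a relation for $B_0$. The second is that the domination $\kappa f_0 \leq f$ with $\kappa > 0$ and $f_0 \geq 0$ forces the vanishing (resp.\ the convergence to zero) of $f$ to propagate to $f_0$, since $0 \leq \kappa f_0 \leq f$.

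For the ASP case I would take $\beta \in \R$ and $x \in \dom(B)$ with $i\beta x = Bx$ and $f(x) = 0$. Because $x \in \dom(B) \subset \dom(B_0)$ and $B_0 x = Bx = i\beta x$, the eigenvalue relation $i\beta x = B_0 x$ holds. From $0 \leq \kappa f_0(x) \leq f(x) = 0$ and $\kappa > 0$ I obtain $f_0(x) = 0$. Applying the property ASP of $f_0$ for $B_0$ then yields $x = 0$, which is precisely ASP of $f$ for $B$.

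For the AIEP case I would take a sequence $(x_n, \beta_n) \subset \dom(B) \times \R$ with $\sup_{n} \norm{x_n} < +\infty$, $\abs{\beta_n} \to \infty$, $i\beta_n x_n - B x_n \to 0$ and $f(x_n) \to 0$. As above, each $x_n$ lies in $\dom(B_0)$ with $B_0 x_n = B x_n$, hence $i\beta_n x_n - B_0 x_n \to 0$; and $0 \leq \kappa f_0(x_n) \leq f(x_n) \to 0$ gives $f_0(x_n) \to 0$. The property AIEP of $f_0$ for $B_0$ then forces $x_n \to 0$, which is AIEP of $f$ for $B$. Since ESP is by definition the conjunction of ASP and AIEP, the ESP case follows immediately from the two previous ones.

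I do not expect a genuine obstacle here; this is a bookkeeping lemma whose only substantive content is the \emph{direction} of the hypothesis $\kappa f_0 \leq f$, which is exactly what allows ``$f$ small'' to imply ``$f_0$ small'' and thereby lets the stronger assumption on the larger domain $\dom(B_0)$ descend to the smaller domain $\dom(B)$. The single point to keep an eye on is the evidently intended reading of the hypothesis as $\kappa f_0|_{\dom(B)} \leq f$, since $f$ is only defined on $\dom(B)$.
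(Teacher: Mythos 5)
Your verification is correct and is exactly the argument the paper intends: the paper states this lemma without proof (``easily may be verified using the above definition''), and your direct check of ASP and AIEP via $B_0x = Bx$ on $\dom(B)$ together with $0 \leq \kappa f_0 \leq f$ is that verification. You are also right that the hypothesis $\kappa f_0|_{\dom(B_0)} \leq f$ contains a typo and must be read as $\kappa f_0|_{\dom(B)} \leq f$, since $f$ is only defined on $\dom(B)$.
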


The abbreviations ASP, AIEP and ESP stand for \emph{asymptotic stability property}, \emph{asymptotic implies exponential stability property} and \emph{exponential stability property},
where a typical choice of $f$ are functions of the form
	\begin{equation}
	f(x)
	 = \sum_{k=0}^{N-1} \alpha_{0,k} \abs{(\H x)^{(k)}(0)}^2 + \alpha_{1,k} \abs{(\H x)^{(k)}(1)}^2
	\end{equation}
for some non-negative constants $\alpha_{j,k} \geq 0$.
That the above terminology is indeed appropriate is the statement of the following lemma.

\begin{lemma}
\label{lem:3.2}
Let $B$ have compact resolvent and generate a $C_0$-semigroup $(S(t))_{t\geq0}$ on $Y$ and assume that for some function $f: \dom(B) \rightarrow \R_+$ 
	\begin{equation}
	\Re \sp{Bx}{x} \leq - f(x),
	 \quad
	 x \in \dom(B).
	\end{equation}
Then
	\begin{enumerate}
	\item If $f$ has property ASP then $(S(t))_{t\geq0}$ is asymptotically (strongly) stable.
	\item If $f$ has property AIEP and $\sigma_p(B) \cap i\R = \emptyset$ then $(S(t))_{t\geq0}$ is (uniformly) exponentially stable.
	\item If $f$ has property ESP then $(S(t))_{t\geq0}$ is (uniformly) exponentially stable.
	\end{enumerate}
\end{lemma}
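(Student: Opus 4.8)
The plan is to verify, for each part, the hypotheses of the two stability theorems by exploiting the dissipativity estimate together with the relevant property. First I would record the consequences common to all three parts. Since $f \geq 0$, the estimate $\Re \sp{Bx}{x} \leq -f(x)$ shows $B$ is dissipative, so $(S(t))_{t\geq0}$ is a contraction semigroup and in particular bounded, as required by both Theorem \ref{thm:Arendt-Batty} and Theorem \ref{thm:gearhart}; moreover $\sigma(B) \subseteq \overline{\C_0^-}$. Because $B$ has compact resolvent, $\sigma(B) = \sigma_p(B)$ consists of isolated eigenvalues, whence $\sigma_r(B) = \emptyset$ and $\sigma(B) \cap i\R$ is countable.

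For Part 1 the key observation is that ASP rules out eigenvalues on the imaginary axis. Indeed, if $Bx = i\beta x$ for some $\beta \in \R$ and $x \neq 0$, then, the inner product being conjugate-linear in its first slot, $\Re \sp{Bx}{x} = \Re(-i\beta) \norm{x}^2 = 0$, so the estimate forces $0 \leq f(x) \leq 0$, i.e. $f(x) = 0$; ASP then gives $x = 0$, a contradiction. Hence $\sigma_p(B) \cap i\R = \emptyset$, and combined with $\sigma_p(B) \subseteq \overline{\C_0^-}$ this yields $\sigma_p(B) \subseteq \C_0^-$. Asymptotic stability then follows from the observation recorded after Theorem \ref{thm:Arendt-Batty} (equivalently, from Theorem \ref{thm:Arendt-Batty} itself, since $\sigma(B) \cap i\R = \emptyset$ is countable and $\sigma_r(B) \cap i\R = \emptyset$).

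For Part 2 I would apply Theorem \ref{thm:gearhart}. The hypothesis $\sigma_p(B) \cap i\R = \emptyset$ together with compact resolvent gives $\sigma(B) \cap i\R = \emptyset$, so $\sigma(B) \subseteq \C_0^-$ and $i\R \subseteq \rho(B)$; the spectral condition of Theorem \ref{thm:gearhart} holds. For the resolvent bound I argue by contradiction using the sequential criterion of the Remark. Since $i\R \subseteq \rho(B)$, the function $\omega \mapsto \norm{R(i\omega,B)}$ is continuous and hence bounded on compact sets, so a failure of $\sup_{\omega} \norm{R(i\omega,B)} < +\infty$ produces $\abs{\beta_n} \to \infty$ with $\norm{R(i\beta_n,B)} \to \infty$. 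Choosing unit vectors $y_n$ with $\norm{R(i\beta_n,B)y_n} \geq \tfrac{1}{2}\norm{R(i\beta_n,B)}$ and setting $x_n := R(i\beta_n,B)y_n / \norm{R(i\beta_n,B)y_n}$, one obtains $\norm{x_n} = 1$ and $i\beta_n x_n - Bx_n = y_n/\norm{R(i\beta_n,B)y_n} \to 0$.

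The main step, and the place where the hypotheses really interlock, is to deduce $f(x_n) \to 0$ so that AIEP becomes applicable. Writing $Bx_n = i\beta_n x_n - (i\beta_n x_n - Bx_n)$ and using $\norm{x_n} = 1$, the contribution $\Re \sp{i\beta_n x_n}{x_n} = \Re(-i\beta_n) = 0$ vanishes, while $\abs{\Re \sp{i\beta_n x_n - Bx_n}{x_n}} \leq \norm{i\beta_n x_n - Bx_n} \to 0$; hence $\Re \sp{Bx_n}{x_n} \to 0$, and the dissipativity estimate squeezes $0 \leq f(x_n) \leq -\Re \sp{Bx_n}{x_n} \to 0$. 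The sequence $(x_n,\beta_n)$ now meets every hypothesis of AIEP, so $x_n \to 0$, contradicting $\norm{x_n} = 1$. Thus the resolvent is uniformly bounded on $i\R$ and Theorem \ref{thm:gearhart} yields exponential stability. Finally, Part 3 is immediate: ESP contains ASP, which as in Part 1 gives $\sigma_p(B) \cap i\R = \emptyset$, and ESP also contains AIEP, so the argument of Part 2 applies verbatim.
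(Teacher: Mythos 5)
Your proposal is correct and follows essentially the same route as the paper: ASP rules out imaginary-axis eigenvalues (which, with compact resolvent, settles the spectral hypotheses of Theorem \ref{thm:Arendt-Batty}), while the dissipativity estimate squeezes $0 \leq f(x_n) \leq \Re \sp{i\beta_n x_n - Bx_n}{x_n} \to 0$ along any relevant sequence, so AIEP verifies the sequential resolvent criterion of Theorem \ref{thm:gearhart} --- your only addition is to re-derive the needed direction of the Remark (unbounded resolvent yields such a sequence) rather than citing it. One harmless slip: under the paper's definition $\sigma_r(B) := \{\lambda \in \C : \ran(\lambda I - B) \ \text{not dense}\}$, compact resolvent actually forces $\sigma_r(B) = \sigma_p(B)$ rather than $\sigma_r(B) = \emptyset$ (at an eigenvalue the Fredholm-index-$0$ operator $\lambda I - B$ has closed range of positive codimension), but your argument only uses $\sigma(B) \cap i\R = \emptyset$, so nothing breaks.
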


\begin{proof}
1.) If $f$ has property ASP and $i \beta x = B x$ for some $x \in \dom(B)$ and $\beta \in \R$ then
	\begin{equation}
	f(x)
	 \leq - \Re \sp{Bx}{x}
	 = - \Re \sp{i \beta x}{x}
	 = 0
	\end{equation}
and by the property ASP it follows $x = 0$, so $i \R \cap \sigma_p(B) = \emptyset$ and asymptotic stability follows from Stability Theorem \ref{thm:Arendt-Batty}.

2.) Since $\sigma_p(B) \cap i \R = \emptyset$ then $\sigma(B) = \sigma_p(B) \subset \C_0^-$.
Further if $(x_n,\beta_n)_{n\geq1} \subset \dom(B) \times \R$ with $\norm{x_n}_{Y} \leq c$ and $\abs{\beta_n} \rightarrow + \infty$ such that $B x_n - i\beta_n x_n \rightarrow 0$ it follows that
	\begin{equation}
	0
	 \leftarrow \Re \sp{i \beta_n x_n - B x_n}{x_n} 
	 \geq f(x_n)
	 \geq 0,
	\end{equation}
i.e. $f(x_n) \rightarrow 0$ and by property AIEP this leads to $x_n \rightarrow 0$ so exponential stability follows from Stability Theorem \ref{thm:gearhart}.

3.\ is a direct consequence of 1.\ and 2.
\end{proof}

\subsection{Asymptotic Stability of Port-Hamiltonian Systems}
\label{subsec:asymptotic_stability}

An example for a function $f: \dom(A_0) \rightarrow \R_+$ which has property ASP is the square of the Euclidean norm of $\H x(\z)$ and its derivatives at position $\z = 0$.
(Of course, the choice $\z = 1$ is possible as well.)
The asymptotic stability result reads as follows.

\begin{proposition}
\label{prop:asymptotic_stability}
Assume that $A$ satisfies
	\begin{align}
	\Re \sp{Ax}{x}_{\H}
	 &\leq - \kappa \sum_{k=0}^{N-1} \abs{(\H x)^{(k)}(0)}^2,
	 \quad
	 x \in \dom(A),
	 \label{eqn:2a}
	\end{align}
for some positive $\kappa > 0$.
Then $(T(t))_{t\geq0}$ is an asymptotically stable and contractive $C_0$-semigroup.
\end{proposition}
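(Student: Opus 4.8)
The plan is to verify the hypotheses of Lemma~\ref{lem:3.2} with $B = A$ and the function $f(x) := \kappa \sum_{k=0}^{N-1} \abs{(\H x)^{(k)}(0)}^2$, and then to invoke its first part.

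First I would note that (\ref{eqn:2a}) with $\kappa > 0$ in particular yields $\Re \sp{Ax}{x}_{\H} \leq 0$ for all $x \in \dom(A)$, so $A$ is dissipative. By Theorem~\ref{gencontrsgr-thm-7} this is equivalent to $A$ generating a contraction $C_0$-semigroup $(T(t))_{t\geq0}$, and moreover $A$ then has compact resolvent. This already secures contractivity and places us in the setting of Lemma~\ref{lem:3.2}, since (\ref{eqn:2a}) reads precisely $\Re \sp{Ax}{x}_{\H} \leq - f(x)$.

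The substantive step is to check that $f$ has property ASP for $A$: whenever $x \in \dom(A)$ and $\beta \in \R$ satisfy $Ax = i\beta x$ and $f(x) = 0$ --- equivalently $(\H x)^{(k)}(0) = 0$ for $k = 0, \ldots, N-1$ --- one must conclude $x = 0$. Writing $y := \H x \in H^N(0,1;\C^d)$ (so that $y, y', \ldots, y^{(N-1)}$ are continuous and the boundary evaluations are meaningful), the eigenvalue relation becomes $\sum_{k=0}^N P_k y^{(k)} = i\beta \H^{-1} y$. Since $P_N$ is invertible I would solve for $y^{(N)}$ and recast this as a first-order linear system $Y' = M(\z) Y$ for $Y = (y, y', \ldots, y^{(N-1)})$, where $M(\z)$ is assembled from the constant blocks $- P_N^{-1} P_k$ and from $i\beta P_N^{-1} \H^{-1}(\z)$. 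The hypothesis $f(x) = 0$ is exactly the initial condition $Y(0) = 0$; note that the boundary conditions defining $\dom(A)$ are not even needed here.

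The whole argument then rests on uniqueness for this initial value problem. I expect this to be the main (though routine) obstacle: because $\H^{-1}$ is merely bounded and measurable, the coefficient matrix $M$ lies only in $L_\infty$, so classical Picard--Lindel\"of does not apply directly. Instead I would invoke the Carath\'eodory existence--uniqueness theorem for linear systems with $L_\infty$ coefficients, which still guarantees that vanishing initial data force the trivial absolutely continuous solution, hence $Y \equiv 0$ and therefore $x = 0$. Having thereby established ASP, Lemma~\ref{lem:3.2}.1 yields asymptotic stability, completing the proof.
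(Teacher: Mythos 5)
Your proposal is correct and follows essentially the same route as the paper: the paper likewise reduces the claim to the ASP property of $f(x) = \sum_{k=0}^{N-1} \abs{(\H x)^{(k)}(0)}^2$, observes that $Ax = i\beta x$ together with $f(x)=0$ is an initial value problem whose unique solution is $x=0$ since $P_N$ is invertible, and concludes via Lemma~\ref{lem:3.2} (working at the level of $A_0$ and transferring to $A$ by Lemma~\ref{lem:properties}, a cosmetic difference from your working directly with $A$). Your explicit appeal to Carath\'eodory/Gr\"onwall uniqueness for the first-order system with merely $L_\infty$ coefficients is a welcome filling-in of a step the paper leaves implicit.
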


\begin{proof}
We prove that
	\begin{equation}
	f(x)
	 := \sum_{k=0}^{N-1} \abs{(\H x)^{(k)}(0)}^2,
	 \quad x \in \dom(A_0)
	\end{equation}
has property ASP and use Lemma \ref{lem:properties}.
Let $\beta \in \R$ and $x \in \dom(A_0)$ with
	\begin{equation}
	i \beta x = A_0 x
	 \ \text{and} \
	 f(x) = 0.
	 \nonumber
	\end{equation}
which is a system of ordinary differential equations
	\begin{equation}
	i \beta x(\z)
	 = \sum_{k=0}^N P_k (\H x)^{(k)}(\z),
	 \quad
	 \z \in (0,1)
	\end{equation}
with boundary conditions
	\begin{equation}
	(\H x)^{(k)}(0)
	 = 0,
	 \quad
	 k = 0, 1, \ldots, N-1.
	\end{equation}
Since $P_N$ is invertible the unique solution of this initial value problem is $x = 0$,
so $f$ has property ASP and the result follows from Lemma \ref{lem:3.2}.
\end{proof}

\subsection{First Order Port-Hamiltonian Systems}
\label{subsec:exponential_stability_first_order}

The following exponential stability result can already be found as Theorem III.2 in \cite{VillegasEtAl_2009}.
Here we present a different proof using a frequency domain method.

\begin{proposition}
\label{prop:1st_order}
Let $N = 1$ and $\H \in W_{\infty}^1(0,1;\C^{d \times d})$. 
If the operator $A$ satisfies the assumption
	\begin{equation}
	\Re \sp{A x}{x}_{\H}
	 \leq - \kappa \abs{(\H x)(0)}^2,
	 \qquad x \in D(A)
	 \label{eqn:prop:1st_order-1}
	\end{equation}
for some $\kappa > 0$,
then $A$ generates an exponentially stable and contractive $C_0$-semigroup on the Hilbert space $X$.
\end{proposition}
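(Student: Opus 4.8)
The plan is to exhibit a function $f$ with property ESP for which the dissipativity estimate (\ref{eqn:prop:1st_order-1}) supplies the bound required by Lemma \ref{lem:3.2}, and then invoke part 3 of that lemma together with Theorem \ref{thm:gearhart}. First I would record that (\ref{eqn:prop:1st_order-1}) forces $\Re\sp{Ax}{x}_{\H}\leq0$, so by Theorem \ref{gencontrsgr-thm-7} the operator $A$ generates a contraction $C_0$-semigroup and has compact resolvent; thus the standing hypotheses of Lemma \ref{lem:3.2} hold. Setting $f_0(x):=\abs{(\H x)(0)}^2$ on $\dom(A_0)$, estimate (\ref{eqn:prop:1st_order-1}) reads $\Re\sp{Ax}{x}_{\H}\leq-\kappa f_0(x)$ on $\dom(A)$, so with $f:=\kappa f_0|_{\dom(A)}$ the required bound for $A$ is in place, and by Lemma \ref{lem:properties} it suffices to prove that $f_0$ has property ESP for $A_0$, i.e.\ both ASP and AIEP.

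For ASP the argument is the one already used in Proposition \ref{prop:asymptotic_stability}: if $i\beta x=A_0x$ with $(\H x)(0)=0$, then $z:=\H x$ solves the linear first order ODE $P_1z'=i\beta\H^{-1}z-P_0z$ with $z(0)=0$; since $P_1$ is invertible the coefficient matrix lies in $L_\infty$, so uniqueness of solutions gives $z\equiv0$ and hence $x=0$.

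The real work is AIEP, which is the frequency domain form of the sideways energy estimate. Given $(x_n,\beta_n)$ with $\sup_n\norm{x_n}<\infty$, $\abs{\beta_n}\to\infty$, $g_n:=i\beta_nx_n-A_0x_n\to0$ in $X$ and $f_0(x_n)\to0$, I would write $z_n:=\H x_n$ and read the defining relation as the spatial ODE $P_1z_n'=i\beta_n\H^{-1}z_n-P_0z_n-g_n$. The key device is to measure $x_n$ in the pointwise Hamiltonian metric, i.e.\ to consider $F_n(\z):=x_n(\z)^*\H(\z)x_n(\z)=z_n(\z)^*\H(\z)^{-1}z_n(\z)$, whose integral over $(0,1)$ equals $\norm{x_n}_{\H}^2$. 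Differentiating in $\z$ and substituting the ODE, the only term carrying the large factor $\beta_n$ is $i\beta_nz_n^*\H^{-1}P_1^{-1}\H^{-1}z_n$; since $\H^{-1}P_1^{-1}\H^{-1}$ is self-adjoint (using $P_1^*=P_1$) this quantity is purely imaginary and drops out of $\frac{d}{d\z}F_n=2\Re\bigl(z_n^*\H^{-1}z_n'\bigr)+z_n^*(\H^{-1})'z_n$. The surviving contributions are the bounded term $z_n^*(\H^{-1})'z_n$ — here the hypothesis $\H\in W_\infty^1$ enters, guaranteeing $(\H^{-1})'\in L_\infty$ — together with $-2\Re(z_n^*\H^{-1}P_1^{-1}P_0z_n)$ and the forcing $-2\Re(z_n^*\H^{-1}P_1^{-1}g_n)$. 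Using the uniform positivity of $\H$ to pass between $\abs{z_n}^2$ and $F_n$ one obtains a differential inequality $\frac{d}{d\z}F_n\leq CF_n+C'\abs{g_n}^2$, and Gronwall's inequality gives $\sup_{\z\in[0,1]}F_n(\z)\leq e^{C}\bigl(F_n(0)+C'\norm{g_n}_{L_2}^2\bigr)$. Since $F_n(0)\leq m^{-1}\abs{z_n(0)}^2=m^{-1}f_0(x_n)\to0$ and $\norm{g_n}_{L_2}\to0$, integrating in $\z$ yields $\norm{x_n}_X^2=\int_0^1F_n\to0$, which is AIEP.

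I expect the cancellation of the $\beta_n$-term to be the crux of the proof. It succeeds precisely because the oscillatory coefficient $\H^{-1}P_1^{-1}\H^{-1}$ is self-adjoint in the Hamiltonian metric, and only in that metric — not in the flat Euclidean one, where the leading coefficient $P_1^{-1}\H^{-1}$ fails to be self-adjoint — does the high frequency contribution vanish while the functional $F_n$ still controls a genuine norm. The remaining subtlety, namely the $\z$-dependence of $\H$ generating the extra term $z_n^*(\H^{-1})'z_n$, is exactly what the regularity assumption $\H\in W_\infty^1$ is there to absorb; once both points are handled the exponential stability follows from Lemma \ref{lem:3.2} and the contractivity from Theorem \ref{gencontrsgr-thm-7}.
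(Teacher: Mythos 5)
Your proof is correct, and its outer scaffolding --- contraction generation and compact resolvent via Theorem \ref{gencontrsgr-thm-7}, reduction to the ESP property of $f_0(x)=\abs{(\H x)(0)}^2$ through Lemmas \ref{lem:properties} and \ref{lem:3.2}, and ASP by ODE uniqueness --- coincides with the paper's; but your AIEP step takes a genuinely different route. The paper pairs the residual with the multiplier $\frac{iq}{\beta_n}(\H x_n)'$ for a weight $q\in C^1([0,1];\R)$ with $q(1)=0$, integrates by parts using Lemma \ref{lem:real_part} and $P_1^*=P_1$, and then chooses $q\leq 0$ so that $q\H'-q'\H$ is uniformly positive, extracting $\norm{x_n}_{L_2}\to0$ from the resulting quadratic form. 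You instead derive the pointwise differential inequality $\frac{d}{d\z}F_n\leq C F_n + C'\abs{g_n}^2$ for the energy density $F_n=z_n^*\H^{-1}z_n$ and propagate smallness from $\z=0$ by Gronwall; this is precisely the sideways-energy estimate of Cox--Zuazua and Villegas et al.\ (which the paper cites as the original, time-domain route via Lyapunov functionals) transplanted into the frequency domain, and your cancellation $\Re\bigl(i\beta_n z_n^*\H^{-1}P_1^{-1}\H^{-1}z_n\bigr)=0$, resting on the self-adjointness of $\H^{-1}P_1^{-1}\H^{-1}$, is the correct resolvent-side surrogate for it. Each approach buys something: yours is more elementary (no weight $q$ to engineer, no integration by parts), never actually uses $\abs{\beta_n}\to\infty$ since the oscillatory term cancels exactly for every real $\beta$, and yields the stronger conclusion $\sup_{\z\in[0,1]}F_n(\z)\to0$; the paper's multiplier method, by contrast, survives the passage to $N=2$ (Proposition \ref{prop:exp_stability}), where your reduction to a first-order ODE in $\z$ --- and hence the Gronwall argument --- is unavailable, which is exactly the paper's stated reason for preferring the multiplier technique over the sideways estimate. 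If you write your version up, make two small points explicit: $F_n$ is absolutely continuous because $z_n\in H^1(0,1;\C^d)\hookrightarrow C([0,1];\C^d)$ and $\H^{-1}\in W^1_\infty$ (uniform positivity together with $\H\in W^1_\infty$), and the initial bound $F_n(0)\leq m^{-1}f_0(x_n)$ uses $\H^{-1}\leq m^{-1}I$.
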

We remark that in (\ref{eqn:prop:1st_order-1}) we could alternatively choose $- \kappa \abs{(\H x)(1)}^2$ for the right hand side.
For the proof we need the following lemma.
\begin{lemma}
\label{lem:real_part}
Let $Q \in W_{\infty}^1(0,1;\C^{d \times d})$ be a function of self-adjoint operators and $x \in H^1(0,1;\C^d)$. Then
	\begin{equation}
	 \Re \sp{x'}{Qx}_{L_2}
	  = - \frac{1}{2} \sp{x}{Q'x}_{L_2} + \frac{1}{2} \left[x(\z)^* Q(\z) x(\z) \right]_0^1.
	\end{equation}
\end{lemma}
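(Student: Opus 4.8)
The plan is to reduce the identity to the fundamental theorem of calculus applied to the scalar function $g(\z) := x(\z)^* Q(\z) x(\z)$. First I would record the regularity facts that make the statement meaningful: since $x \in H^1(0,1;\C^d)$ embeds continuously into $C([0,1];\C^d)$, the pointwise boundary values $x(0), x(1)$ are well defined, so the bracket $[x(\z)^* Q(\z) x(\z)]_0^1$ has a clear meaning; and since $Q \in W_{\infty}^1$ is Lipschitz while $x \in H^1$, the product $g$ lies in $W^{1,1}(0,1)$ and is thus absolutely continuous, which is precisely what is needed to differentiate it and to integrate the derivative back up.

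Next I would compute the almost-everywhere derivative of $g$ by the product rule,
\begin{equation}
g'(\z) = (x'(\z))^* Q(\z) x(\z) + x(\z)^* Q'(\z) x(\z) + x(\z)^* Q(\z) x'(\z). \nonumber
\end{equation}
The key algebraic step exploits the self-adjointness of $Q(\z)$: the scalar $x^* Q x'$ is the complex conjugate of $(x')^* Q x$, because $\overline{(x')^* Q x} = x^* Q^* x' = x^* Q x'$. Hence the first and third terms combine into $2\Re\left[(x')^* Q x\right]$, giving
\begin{equation}
g'(\z) = 2\Re\left[(x'(\z))^* Q(\z) x(\z)\right] + x(\z)^* Q'(\z) x(\z). \nonumber
\end{equation}

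Finally I would integrate this identity over $(0,1)$ and apply the fundamental theorem of calculus to the left-hand side, obtaining
\begin{equation}
\left[x(\z)^* Q(\z) x(\z)\right]_0^1 = 2\Re\sp{x'}{Qx}_{L_2} + \sp{x}{Q'x}_{L_2}, \nonumber
\end{equation}
where I use that the real part commutes with the (real-linear) integral and that $\sp{x}{Q'x}_{L_2} = \int_0^1 x^* Q' x\, d\z$ by the inner-product convention (conjugate-linear in the first slot). Rearranging yields the claimed formula; note that $\sp{x}{Q'x}_{L_2}$ is automatically real since $Q'$ is again self-adjoint, consistent with the real-valued left-hand side. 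The only genuine obstacle is the regularity bookkeeping in the first paragraph, namely verifying that the product of an $H^1$ vector field with a $W_{\infty}^1$ matrix field is absolutely continuous so that the product rule and the fundamental theorem of calculus are legitimate; everything after that is the standard integration-by-parts computation combined with the self-adjointness of $Q$.
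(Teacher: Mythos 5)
Your proof is correct, and it is the standard argument: the paper in fact states Lemma \ref{lem:real_part} without any proof, treating it as the elementary integration-by-parts identity that your computation makes explicit. Your reduction to the fundamental theorem of calculus for $g(\z) = x(\z)^* Q(\z) x(\z)$, including the regularity bookkeeping ($H^1 \hookrightarrow C[0,1]$, $g \in W^{1,1}$ so absolutely continuous) and the use of $Q = Q^*$ to combine $(x')^* Q x + x^* Q x' = 2 \Re \left[ (x')^* Q x \right]$, is exactly the justification the paper implicitly relies on.
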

\begin{proof}[Proof of Proposition \ref{prop:1st_order}.]
Theorem \ref{gencontrsgr-thm-7} implies that $A$ generates a contraction $C_0$-semigroup
Let $f: \dom(A_0) \rightarrow \R_+$ be given by $f(x) = \abs{(\H x)(0)}^2$.
We show that $f$ has the ESP property.
By Proposition \ref{prop:asymptotic_stability} property ASP holds and thus we only need to prove the property AIEP.
Let $((x_n,\beta_n))_{n \geq 1} \subset \dom(A_0) \times \R$ be any sequence with $\norm{x_n}_{L_2} \leq c$ and $\abs{\beta_n} \rightarrow \infty$ such that
	\begin{equation}
	A x_n - i \beta_n x_n
	 \xrightarrow{n \rightarrow \infty}
	 0
	 \qquad
	 f(x_n) \xrightarrow{n \rightarrow \infty} 0.
	\end{equation}
Then we obtain the definition of $f$ that
	\begin{equation}
	(\H x_n)(0) \xrightarrow{n \rightarrow \infty} 0.
	\end{equation}
Moreover $\frac{x_n}{\beta_n}$ is bounded in the graph norm $\norm{\cdot}_{A_0}$ and by Lemma \ref{lem:equivalent_norms} we get
	\begin{equation}
	\norm{\frac{(\H x_n)'}{\beta_n}}_{L_2}
	 \leq c,
	 \qquad
	 \text{for all} \ n \in \N.
	\end{equation}
Letting $q \in C^1([0,1];\R)$ with $q(1) = 0$ and having Lemma \ref{lem:real_part} in mind we find
	\begin{align}
	0
	 &\leftarrow \frac{1}{\beta_n} \Re \sp{A x_n - i \beta_n x_n}{i q (\H x_n)'}_{L_2}
	 \nonumber \\
	 &= \frac{1}{\beta_n} \Re \sp{P_1 (\H x_n)'}{i q (\H x_n)'}_{L_2}
	   \nonumber \\
	   & \quad + \frac{1}{\beta_n} \Re \sp{P_0 (\H x_n)}{i q (\H x_n)'}_{L_2}
	   - \Re \sp{x_n}{q (\H x_n)'}_{L_2}
	 \nonumber \\
	 &=\frac{1}{2 \beta_n} \left( \sp{\H x_n}{i q' P_0 (\H x_n)}_{L_2} - \left[ (\H x_n)(\z)^* i q(\z) P_0 (\H x_n)(\z) \right]_0^1 \right)
	   \nonumber \\
	   & \quad - \Re \sp{x_n}{q\H x_n'}_{L_2}
	   - \sp{x_n}{q \H' x_n}_{L_2}
	 \nonumber \\
	 &= \frac{1}{2} \sp{x_n}{(q\H)' x_n}_{L_2}
	   - \frac{1}{2} \left[ x_n(\z)^* q(\z) \H(\z) x_n(\z) \right]_0^1
	   - \sp{x_n}{q \H' x_n}_{L_2}
	   + o(1)
	 \nonumber \\
	 &= - \frac{1}{2} \sp{x_n}{(q \H' - q' \H)x_n}_{L_2}
	   + o(1),
	 \nonumber
	\end{align}
since $(\H x_n)(0) \rightarrow 0, \ q(1) = 0$ and $\abs{\beta_n} \rightarrow \infty$, using integration by parts and $P_1 = P_1^*$.
In particular we may choose $q \leq 0$ such that
	\begin{equation}
	\lambda q - m q' > 0,
	 \qquad
	 \ \z \in [0,1].
	 \nonumber
	\end{equation}
where $\H(\z) \geq m I$ and $\pm \H'(\z) \leq \lambda I$ for a.e. $\z \in [0,1]$,
so $q\H'-q'\H$ is uniformly positive.
This implies
	\begin{equation}
	\norm{x_n}_{L_2}
	 \simeq
	 \norm{x_n}_{L_{2,q\H'-q'\H}}
	 \xrightarrow{n \rightarrow \infty}
	 0.
	\end{equation}
Hence property AIEP holds and exponential stability follows with Lemma \ref{lem:3.2}.
\end{proof}

\subsection{Second Order Port-Hamiltonian Systems}
\label{subsec:exponential_stability_second_order}

As we have seen in the preceding subsection for first order ($N = 1$) port-Hamiltonian systems the sufficient criterion for asymptotic stability in Proposition \ref{prop:asymptotic_stability} even guarantees exponential stability (Proposition \ref{prop:1st_order}).
We now consider second order port-Hamiltonian systems,
i.e.
	\begin{equation}
	A_0 x = P_2 (\H x)'' + P_1 (\H x)' + P_0 (\H x),
	 \quad
	 x \in \dom(A_0) = \H^{-1}H^2(0,1;\C^d)
	 \nonumber
	\end{equation}
and
	\begin{equation}
	A = A_0|_{\dom(A_0)} \ \text{for} \ \dom(A) = \{x \in \dom(A_0): W \left( \begin{array}{c} f_{\partial, \H x} \\ e_{\partial, \H x} \end{array} \right) = 0 \}.
	 \nonumber
	\end{equation}
Adding an additional term $\abs{(\H x)(1)}^2$ (or, $\abs{(\H x)'(1)}^2$) in the dissipativity relation (\ref{eqn:2a}) we again obtain exponential stability.
By means of the example of the one-dimensional Schr\"odinger equation we show that the sufficient criterion for asymptotic stability as in Proposition \ref{prop:asymptotic_stability} is not sufficient for exponential stability in the case $N = 2$.

\begin{proposition}
\label{prop:exp_stability}
Let $N =2$ and $\H \in W^1_{\infty}(0,1;\C^{d \times d})$ and assume
	\begin{equation}
	\Re \sp{Ax}{x}_{\H}
	 \leq - \kappa \left[ \abs{(\H x)(0)}^2 + \abs{(\H x)'(0)}^2 + \left\{ \begin{array}{c} \abs{(\H x)(1)}^2 \\ \text{or} \\ \abs{(\H x)'(1)}^2 \end{array} \right\} \right],
	 \quad
	 x \in \dom(A)
	 \label{eqn:exp_stability_1}
	\end{equation}
for some $\kappa > 0$.
Then $(T(t))_{t\geq0}$ is an exponentially stable and contractive $C_0$-semigroup.
\end{proposition}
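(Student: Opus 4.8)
The plan is to follow the blueprint of the proof of Proposition \ref{prop:1st_order}. First I would invoke Theorem \ref{gencontrsgr-thm-7}: the damping hypothesis (\ref{eqn:exp_stability_1}) in particular forces $\Re\sp{Ax}{x}_{\H}\le 0$, so $A$ is dissipative and generates a contractive $C_0$-semigroup with compact resolvent. I then set
\begin{equation}
f(x) = \abs{(\H x)(0)}^2 + \abs{(\H x)'(0)}^2 + \left\{ \abs{(\H x)(1)}^2 \ \text{or} \ \abs{(\H x)'(1)}^2 \right\}, \quad x \in \dom(A_0),\nonumber
\end{equation}
so that (\ref{eqn:exp_stability_1}) reads $\Re\sp{Ax}{x}_{\H}\le -\kappa f(x)$, and it suffices, by Lemma \ref{lem:3.2} together with Lemma \ref{lem:properties}, to show that $f$ has property ESP for $A_0$. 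Since for $N=2$ we have $f(x)\ge \sum_{k=0}^{N-1}\abs{(\H x)^{(k)}(0)}^2$, property ASP is already contained in the computation of Proposition \ref{prop:asymptotic_stability} (vanishing Cauchy data at $\z=0$ together with the invertibility of $P_2$ forces the only solution of $i\beta x = A_0 x$ to be zero). Hence everything reduces to verifying property AIEP.

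For AIEP I take $(x_n,\beta_n)\subset\dom(A_0)\times\R$ with $\sup_n\norm{x_n}_{L_2}<\infty$, $\abs{\beta_n}\to\infty$, $A_0 x_n - i\beta_n x_n\to 0$ in $X$ and $f(x_n)\to 0$, and I must prove $x_n\to0$ in $L_2$. Writing $u_n:=\H x_n$, the resolvent relation becomes $P_2 u_n'' + P_1 u_n' + P_0 u_n - i\beta_n\H^{-1}u_n\to 0$ in $L_2$, while $f(x_n)\to0$ yields $u_n(0)\to0$, $u_n'(0)\to0$ and one of $u_n(1)\to0$, $u_n'(1)\to0$. The essential a priori information comes from Lemma \ref{lem:equivalent_norms}: since $\norm{A_0 x_n}_X = O(\abs{\beta_n})$, the graph norm is $O(\abs{\beta_n})$, hence $\norm{u_n}_{H^2}=O(\abs{\beta_n})$; interpolating between $\norm{u_n}_{L_2}=O(1)$ and $\norm{u_n}_{H^2}=O(\abs{\beta_n})$ gives the sharper bound $\norm{u_n'}_{L_2}=O(\abs{\beta_n}^{1/2})$, which will be needed to discard several cross terms.

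The core is a frequency-domain multiplier estimate in the spirit of Proposition \ref{prop:1st_order}, now carrying the genuinely second-order term. I would fix a real weight $q\in C^1([0,1])$ with $q(1)=0$, so that the endpoint $\z=1$, at which only one trace is damped, is removed from every boundary contribution, and test the resolvent relation against $iq u_n'$. As in the first-order case the term carrying $P_1$ drops out, because $P_1^*=P_1$ makes its integrand purely imaginary; the term carrying $P_2$ no longer vanishes, but $P_2^*=-P_2$ lets it integrate by parts into boundary terms plus the interior expression $-\tfrac12\int_0^1 q'\,(u_n')^*(-iP_2)u_n'\,d\z$ governed by the self-adjoint matrix $-iP_2$; the $P_0$-term and the remainder $\sp{r_n}{iq u_n'}$ are of lower order in $\abs{\beta_n}$ by the bounds above. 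All boundary terms at $\z=0$ vanish thanks to the full Cauchy data $u_n(0),u_n'(0)\to0$, and those at $\z=1$ vanish by $q(1)=0$. Rewriting the left-hand side $\beta_n\Re\sp{x_n}{q u_n'}_{L_2}$ by means of Lemma \ref{lem:real_part} turns it into a weighted $\beta_n$-multiple of $\int u_n^*(q\H^{-1})' u_n$, i.e. a weighted $L_2$-quantity in $u_n$.

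The hard part is exactly this second-order term $P_2 u_n''$, which has no analogue in the first-order proof, where the top-order term disappeared altogether. The multiplier $iq u_n'$ alone only links a weighted $\int u_n^*(q\H^{-1})' u_n$ to the $O(1)$ quantity $\tfrac1{\beta_n}\int q'(u_n')^*(-iP_2)u_n'$, so a single identity does not close; I expect to have to combine it with the energy identity obtained by testing against $u_n$ itself (and to re-substitute $P_2 u_n''$ from the equation where convenient) in order to extract a uniformly positive weighted $L_2$-norm of $u_n$ and conclude $\norm{u_n}_{L_2}\to0$. The delicate point throughout is the endpoint $\z=1$: since only one of its two traces is controlled while both may be of size $O(\abs{\beta_n}^{1/2})$ or larger, the choice $q(1)=0$ together with the fully damped Cauchy data at $\z=0$ must be used to propagate control across $[0,1]$ uniformly in $\beta_n$ --- in effect a frequency-domain realisation of the sideways energy estimate --- so that none of the weighted identities degenerates as $\abs{\beta_n}\to\infty$.
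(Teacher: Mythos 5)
Your skeleton agrees with the paper's proof up to the decisive step: generation via Theorem \ref{gencontrsgr-thm-7}, reduction to the ESP of $f$ via Lemmas \ref{lem:properties} and \ref{lem:3.2}, ASP from Proposition \ref{prop:asymptotic_stability}, the bound $\norm{\H x_n}_{H^2}=O(\abs{\beta_n})$ from Lemma \ref{lem:equivalent_norms}, and the first multiplier $\frac{iq}{\beta_n}(\H x_n)'$ with $q(1)=0$, whose $P_1$- and $P_2$-parts you treat exactly as in (\ref{eqn:lem:exp_stability_*}). But the device you propose to close the argument fails. Testing the residual against $\H x_n$ gives $\Re \sp{A_0 x_n - i\beta_n x_n}{x_n}_{\H}$, i.e.\ the dissipativity pairing: with $P_2^*=-P_2$ the once-integrated interior term $(u_n')^*P_2^*u_n'$ is purely imaginary, the $P_1$-term contributes only a boundary bracket (Lemma \ref{lem:real_part} with constant $Q=P_1$), and the $i\beta_n$-term drops out, so this identity is pure boundary information --- essentially Lemma \ref{gencontrsgr-lem-4} --- and is already exhausted by $f(x_n)\to 0$. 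It cannot supply an interior counterpart to the $O(1)$ obstruction $\frac{1}{\beta_n}\sp{P_2 (\H x_n)'}{iq'(\H x_n)'}_{L_2}$ that you correctly identified; nor does re-substituting $P_2(\H x_n)''$ from the equation help, since that obstruction is quadratic in $(\H x_n)'$, not in $(\H x_n)''$. The paper's missing idea is a \emph{second weighted multiplier}, $\frac{iq'}{\beta_n}\H x_n$: integrating its $P_2$-part by parts produces exactly $-\frac{1}{\beta_n}\sp{P_2(\H x_n)'}{iq'(\H x_n)'}_{L_2}$ plus brackets (see (\ref{eqn:lem:exp_stability_+})), and the combination ``two times (\ref{eqn:lem:exp_stability_*}) minus (\ref{eqn:lem:exp_stability_+})'' cancels the interior $P_2$-terms identically, leaving $\sp{x_n}{(2q'\H - q\H')x_n}_{L_2}$ plus boundary brackets; one then chooses $q(1)=0$ with $2q'\H - q\H'$ uniformly positive.

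A second, related inaccuracy: your claim that all boundary contributions at $\z=1$ ``vanish by $q(1)=0$'' is true for the first multiplier but false for the combined identity, which retains the mixed bracket $\frac{1}{\beta_n}\Re \bigl[(\H x_n)'(1)^* P_2^* i q'(1) (\H x_n)(1)\bigr]$ weighted by $q'(1)$ --- and $q'(1)$ cannot vanish, since $q(1)=0$ and uniform positivity of $2q'\H - q\H'$ force $2q'(1)\H(1)>0$. This surviving bracket is precisely where the third term in (\ref{eqn:exp_stability_1}) is used: $f(x_n)\to 0$ makes one of the two traces at $\z=1$ tend to zero, while the other is $o(\abs{\beta_n})$ by the $C^1$-interpolation estimate of Lemma \ref{lem:interpolation} (first part), which gives $\H x_n/\beta_n \to 0$ in $C^1([0,1];\C^d)$ --- a pointwise bound you never state, although your Gagliardo--Nirenberg bound $\norm{(\H x_n)'}_{L_2}=O(\abs{\beta_n}^{1/2})$ is correct and does dispose of the $P_0$ cross term. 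Your closing remarks about ``propagating control across $[0,1]$'' gesture at this endpoint difficulty without supplying the mechanism; as the Schr\"odinger example in the paper shows, Cauchy data damped at $\z=0$ alone is genuinely insufficient, so this is not a removable technicality but the point of the hypothesis.
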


Remark that again one may interchange $0$ and $1$ in equation (\ref{eqn:exp_stability_1}).
For the proof, let us first state an auxiliary embedding-and-interpolation result.

\begin{lemma}
\label{lem:interpolation}
Let $0 \leq k < N \in \N_0$ and $\theta \in (0,1)$ such that $\eta := \theta N \in (k + \frac{1}{2},k+1)$.
Then there exist a constant $c_{\theta} > 0$ such that for all $f \in H^N(0,1;\C^d)$
	\begin{equation}
	 \norm{f}_{C^{k}}
	  \leq c_{\theta} \norm{f}_{L_2}^{1-\theta} \norm{f}_{H^N}^{\theta}.
	\end{equation}
Further for $\sigma := \frac{k}{N}$ there exists a constant $c_{\sigma} > 0$ such that for all $f \in H^N(0,1;\C^d)$
	\begin{equation}
	 \norm{f}_{H^k}
	  \leq c_{\sigma} \norm{f}_{L_2}^{1-\sigma} \norm{f}_{H^N}^{\sigma}.
	\end{equation}
\end{lemma}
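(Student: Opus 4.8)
The plan is to transfer both inequalities to the whole line $\R$ by a fixed extension operator and then read them off from the Fourier transform, where intermediate regularity is encoded by the weight $(1+\abs{\xi}^2)^s$. Concretely, I would fix a linear extension operator $E \colon H^N(0,1;\C^d) \to H^N(\R;\C^d)$ with $(Ef)|_{(0,1)} = f$ that is \emph{simultaneously} bounded from $L_2(0,1;\C^d)$ into $L_2(\R;\C^d)$ and from $H^N(0,1;\C^d)$ into $H^N(\R;\C^d)$ (such a universal extension exists, e.g. by Stein's construction, or for integer order $\leq N$ by iterated reflection). On $\R$ I use the Fourier-side norm $\norm{g}_{H^s(\R)}^2 = \int_{\R}(1+\abs{\xi}^2)^s \abs{\hat g(\xi)}^2 \, d\xi$, which for integer $s$ is equivalent to the usual Sobolev norm. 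The point of this reduction is that I only need $E$ bounded at the two endpoints $s=0$ and $s=N$; the intermediate norm $\norm{Ef}_{H^{\eta}(\R)}$ will be controlled by an interpolation inequality on $\R$, not by boundedness of $E$ on $H^{\eta}$.

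The first ingredient I would establish is the interpolation inequality on $\R$. Writing $\eta = \theta N$ and factoring the weight as $(1+\abs{\xi}^2)^{\theta N}\abs{\hat g}^2 = (\abs{\hat g}^2)^{1-\theta}\bigl((1+\abs{\xi}^2)^N \abs{\hat g}^2\bigr)^{\theta}$, Hölder's inequality with exponents $1/(1-\theta)$ and $1/\theta$ gives $\norm{g}_{H^{\eta}(\R)} \leq \norm{g}_{L_2(\R)}^{1-\theta}\norm{g}_{H^N(\R)}^{\theta}$. The second ingredient is the one-dimensional Sobolev embedding $H^{\eta}(\R) \hookrightarrow C^k(\R)$ valid for $\eta > k + \tfrac12$: for an index $j \leq k$, Fourier inversion and Cauchy--Schwarz yield $\norm{g^{(j)}}_{L_{\infty}} \leq C\int_{\R}\abs{\xi}^{j}\abs{\hat g(\xi)}\, d\xi \leq C\bigl(\int_{\R}\abs{\xi}^{2j}(1+\abs{\xi}^2)^{-\eta}\,d\xi\bigr)^{1/2}\norm{g}_{H^{\eta}(\R)}$, and the weight integral converges because $2\eta - 2j \geq 2\eta - 2k > 1$. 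Hence $\norm{g}_{C^k(\R)} \leq C_{\eta}\norm{g}_{H^{\eta}(\R)}$.

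For the first estimate I would then chain these bounds, using that $\eta \in (k+\tfrac12,k+1)$ in particular gives $\eta > k+\tfrac12$: restricting $Ef$ to $(0,1)$, $\norm{f}_{C^k} \leq \norm{Ef}_{C^k(\R)} \leq C_{\eta}\norm{Ef}_{H^{\eta}(\R)} \leq C_{\eta}\norm{Ef}_{L_2(\R)}^{1-\theta}\norm{Ef}_{H^N(\R)}^{\theta} \leq c_{\theta}\norm{f}_{L_2}^{1-\theta}\norm{f}_{H^N}^{\theta}$, where in the last step the two operator norms of $E$ enter with exponents $1-\theta$ and $\theta$ and merge into the single constant $c_{\theta}$. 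For the second estimate the same scheme applies with $\eta = k$ an integer and $\theta = \sigma = k/N \in (0,1)$; here no embedding is needed, and I would simply use $\norm{f}_{H^k} \leq \norm{Ef}_{H^k(\R)}$ together with the equivalence of the Fourier-side norm and the usual $H^k$ norm, and then apply the interpolation inequality directly.

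I expect the main obstacle to be technical rather than conceptual, namely producing and invoking a \emph{single} extension operator bounded in both $L_2$ and $H^N$ (so that the two endpoint bounds share the same $E$ and no reinterpolation of the extension is required), and tracking that its operator norms combine correctly under the multiplicative interpolation inequality. Once this is in place the role of the hypotheses is transparent: $\eta > k+\tfrac12$ is exactly the condition making the embedding integral converge, while $\eta < k+1$ (equivalently $\theta < 1$) keeps the interpolation exponent strictly inside $(0,1)$ so that both Hölder exponents are finite. As an alternative to the explicit Fourier computation one could instead cite the corresponding interpolation and embedding theorems for the scale $[L_2,H^N]_{\theta} = H^{\theta N}$ from the standard references on Sobolev spaces and interpolation theory.
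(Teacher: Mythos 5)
Your proof is correct, but it takes a genuinely different route from the paper. The paper works directly on the interval and outsources the analysis to Triebel's function space theory: for the first inequality it picks $p$ with $\eta - \frac12 > k+1-\frac1p > k$, chains the Sobolev--Morrey embedding $W_p^{k+1}(0,1;\C^d) \hookrightarrow C^k([0,1];\C^d)$ with the Triebel--Lizorkin embedding $F_{2,2}^{\eta}(0,1;\C^d) \hookrightarrow F_{p,2}^{k+1}(0,1;\C^d) = W_p^{k+1}(0,1;\C^d)$, and then identifies $F_{2,2}^{\eta} = \left( L_2, H^N \right)_{\theta,2}$ so that the multiplicative estimate is the abstract interpolation inequality for real interpolation spaces; the second inequality is likewise read off from $H^k = F_{2,2}^k = \left( L_2, H^N \right)_{\sigma,2}$ (a case of Gagliardo--Nirenberg, as the paper notes). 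You instead reduce to the full line via a single extension operator bounded simultaneously on $L_2$ and $H^N$ and prove everything by hand on the Fourier side: H\"older with exponents $\frac{1}{1-\theta}$, $\frac{1}{\theta}$ for the interpolation inequality, and Cauchy--Schwarz against the weight $\abs{\xi}^{2j}(1+\abs{\xi}^2)^{-\eta}$ for the embedding $H^{\eta}(\R) \hookrightarrow C^k(\R)$. Your version is elementary and self-contained precisely where the paper's proof is heavy (no interpolation-space identifications on a bounded domain, which is the delicate point the paper cites Triebel for), at the price of producing the simultaneous extension operator --- which does exist for an interval, e.g.\ by Hestenes-type higher-order reflection or Stein's construction, and your bookkeeping of its two operator norms entering with exponents $1-\theta$ and $\theta$ is exactly right. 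Two cosmetic remarks: the upper bound $\eta < k+1$ is not actually used in your argument (only $\eta > k+\frac12$ and $\theta<1$ enter, the latter already following from $\eta < k+1 \leq N$ since $k<N$), and in the second inequality the case $k=0$, $\sigma=0$ degenerates in H\"older but the claim is then trivial.
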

\begin{proof}
Let $p \in (1, \infty)$ such that $\eta - \frac{1}{2} > k+1 - \frac{1}{p} > k$.
Then by the Sobolev-Morrey Embedding Theorem
	\begin{equation}
	 C^{k}([0,1];\C^d)
	  \hookrightarrow W_p^{k+1}(0,1;\C^d)
	\end{equation}
is continuously embedded.
Further, using the notation of \cite{Triebel_1983}, we have by the theorems of Subsections 3.3.1 and 3.3.6 in \cite{Triebel_1983} that
	\begin{align}
	W_p^N(0,1;\C^d)
	 &=F_{p,2}^{k+1}(0,1;\C^d)
	 \hookrightarrow F_{2,2}^{\eta}(0,1;\C^d)
	 \nonumber \\
	 &= \left( F_{2,2}^0(0,1;\C^d), F_{2,2}^N(0,1;\C^d) \right)_{\theta,2}
	 \nonumber \\
	 &= \left( L_2(0,1;\C^d), H^N(0,1;\C^d) \right)_{\theta,2}
	\end{align}
and the first assertion follows by the interpolation inequality.
The second assertion is a special case of the Gagliardo-Nirenberg inequality.
In the language and with the theory of \cite{Triebel_1983} it results from
	\begin{align}
	 H^k(0,1;\C^d)
	  &= F_{2,2}^k(0,1;\C^d)
	  = \left(F_{2,2}^0(0,1;\C^d), F_{2,2}^N(0,1;\C^d) \right)_{\sigma,2}
	  \nonumber \\
	  &=  \left( L_2(0,1;\C^d), H^N(0,1;\C^d) \right)_{\sigma,2}.
	  \nonumber
	\end{align}
\end{proof}

\begin{proof}[Proof of Proposition \ref{prop:exp_stability}.]
Theorem \ref{gencontrsgr-thm-7} implies that $A$ generates a contraction $C_0$-semigroup.
We show that
 	\begin{equation}
	f(x)
	 := \abs{(\H x)(0)}^2 + \abs{(\H x)'(0)}^2 + \left\{ \begin{array}{c} \abs{(\H x)(1)}^2 \\ \text{or} \\ \abs{(\H x)'(1)}^2 \end{array} \right\},
	 \ x \in \dom(A_0)
	 \label{eqn:prop:exp_stability_2}
	\end{equation}
has the property ESP.
By Proposition \ref{prop:asymptotic_stability} and Lemma \ref{lem:properties} it remains to verify the AIEP property.
Let $(x_n,\beta_n)_{n \geq 1} \subset \dom(A_0) \times \R$ be a sequence with $\norm{x_n}_{L_2} \leq c$ for all $n \in \N$ and $\abs{\beta_n} \rightarrow + \infty$ as $n \rightarrow +\infty$ such that
	\begin{equation}
	i \beta_n x_n - A_0 x_n
	 \xrightarrow{n \xrightarrow{} \infty} 0.
	 \label{eqn:4}
	\end{equation}
By Lemma \ref{lem:equivalent_norms} the sequence $\left(\frac{\H x_n}{\beta_n} \right)_{n\geq1} \subseteq H^2(0,1;\C^d)$ is bounded and by Lemma \ref{lem:interpolation}  $\frac{\H x_n}{\beta_n}$ converges to zero in $C^1([0,1];\C^d)$ (since $\abs{\beta_n} \rightarrow \infty$).
Let $q \in C^2([0,1];\R)$ be some real function.
Integrating by parts and employing the assumptions on the matrices $P_1$ and $P_2$ and Lemma \ref{lem:real_part} we conclude
	\begin{align}
	 0
	  &\longleftarrow \Re \sp{A_0 x_n - i \beta_n x_n}{\frac{iq}{\beta_n} (\H x_n)'}_{L_2}
	  \nonumber \\
	  &= \Re \frac{1}{\beta_n} \sp{P_2 (\H x_n)''}{iq(\H x_n)'}_{L_2}
	   + \frac{1}{\beta_n} \Re \sp{P_1 (\H x_n)'}{iq(\H x_n)'}_{L_2}
	   \nonumber \\ &\quad
	   - \Re \sp{x_n}{q(\H x_n)'}_{L_2}
	   + o(1)
	  \nonumber \\
	  &= - \frac{1}{2 \beta_n} \sp{P_2 (\H x_n)'}{iq'(\H x_n)'}_{L_2}
	   + \frac{1}{2} \sp{x_n}{(q'\H-q\H')x_n}_{L_2}
	   \nonumber \\ &\quad
	   + \frac{1}{2 \beta_n} \left[ (\H x_n)'(\z)^* P_2^* i q(\z) (\H x_n)'(\z) \right]_0^1
	   - \frac{1}{2} \left[ x_n(\z)^* q(\z) \H(\z) x_n(\z) \right]_0^1
	   + o(1)
	  \label{eqn:lem:exp_stability_*}
	\end{align}
and
	\begin{align}
	 0
	  &\longleftarrow \Re \sp{A_0 x_n - i \beta_n x_n}{\frac{iq'}{\beta_n} (\H x_n)}_{L_2}
	  \nonumber \\
	  &= \frac{1}{\beta_n} \Re \sp{P_2 (\H x_n)''}{iq'(\H x_n)}_{L_2}
	   - \sp{x_n}{q'\H x_n}_{L_2}
	   + o(1)
	  \nonumber \\
	  &= - \frac{1}{\beta_n} \sp{P_2 (\H x_n)'}{iq'(\H x_n)'}_{L_2}
	   - \sp{x_n}{q'\H x_n}_{L_2}
	   \nonumber \\ &\quad
	   + \frac{1}{\beta_n} \Re \left[ (\H x_n)'(\z)^* P_2^* i q'(\z) (\H x_n)(\z) \right]_0^1
	   + o(1)
	  \label{eqn:lem:exp_stability_+}
	\end{align}
Subtracting (\ref{eqn:lem:exp_stability_+}) from two times (\ref{eqn:lem:exp_stability_*}) this implies
	\begin{align}
	 0
	  &\longleftarrow \sp{x_n}{(2q'\H - q\H')x_n}_{L_2}
	   + \frac{1}{\beta_n} \left[ (\H x_n)'(\z)^* P_2^* iq(\z) (\H x_n)'(\z) \right]_0^1
	   \nonumber \\ &\quad
	   + \frac{1}{\beta_n} \Re \left[(\H x_n)'(\z)^* P_2^* i q'(\z) (\H x_n)(\z) \right]_0^1
	   - \left[ x_n(\z)^* q(\z) \H(\z) x_n(\z) \right]_0^1.
	\end{align}
Choosing $q \in C^2([0,1];\R)$ such that $q(1) = 0$ and $2q'\H - q\H'$ is uniformly positive this leads in the case that also $f(x_n) \rightarrow 0$ to
	\begin{equation}
	 \norm{x_n}_{L_2}
	  \simeq \norm{x_n}_{q\H'-2q'\H}
	  \xrightarrow{n \rightarrow \infty} 0
	\end{equation}
and thus $f$ also has property AIEP.
\end{proof}

Without proof we remark that using the same proof technique as for Proposition \ref{prop:exp_stability} one obtains the following generalization to port-Hamiltonian systems of even order.

\begin{proposition}
Let $N = 2K \in 2 \N$ be even and $\H \in W^1_{\infty}(0,1;\C^{d \times d})$.
If for some $\kappa > 0$ the dissipativity condition
	\begin{align}
	\Re \sp{Ax}{x}_{\H}
	 &\leq - \kappa f(x)
	 := - \kappa \sum_{\z = 0,1} \sum_{k=0}^{N-1} \alpha_{\z,k} \abs{(\H x)^{(k)}(\z)}^2,
	 \quad
	 x \in \dom(A)
	 \nonumber
	\end{align}
holds true where $\alpha_{\z,k} \geq 0$ are constants such that for some $\z_0 \in \{0,1\}$
	\begin{align}
	 \min (\alpha_{\z_0,0}, \alpha_{\z_0,K})
	  &> 0,
	  \nonumber \\
	 \max (\alpha_{\z_0,k+1}, \alpha_{\z_0, N-k-1})
	  &> 0,
	  \qquad \text{for} \ k = 0, 1, \ldots, K-1,
	  \nonumber \\
	 \max (\alpha_{\z,k}, \alpha_{\z,N-k-1})
	  &> 0,
	  \qquad \text{for} \ \z = 0,1 \ \text{and} \ k = 0, 1, \ldots, K-1,
	  \nonumber
	\end{align}
and $(T(t))_{t\geq0}$ is asymptotically stable,
then $A$ generates an exponentially stable contraction $C_0$-semigroup on $X$.
\end{proposition}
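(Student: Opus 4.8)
The plan is to run, at order $N = 2K$, the frequency-domain multiplier argument already carried out for $N=2$ in Proposition \ref{prop:exp_stability}. By Theorem \ref{gencontrsgr-thm-7} the dissipativity hypothesis makes $A$ the generator of a contraction $C_0$-semigroup with compact resolvent, and since $(T(t))_{t\geq0}$ is assumed asymptotically stable, the criterion noted after Theorem \ref{thm:Arendt-Batty} gives $\sigma_p(A)\subset\C_0^-$, in particular $\sigma_p(A)\cap i\R=\emptyset$. Hence, by the second part of Lemma \ref{lem:3.2} (after the harmless rescaling $f\rightsquigarrow\kappa f$ allowed by Lemma \ref{lem:properties}), it suffices to verify that $f$ has property AIEP: for every sequence $(x_n,\beta_n)\subset\dom(A_0)\times\R$ with $\norm{x_n}_{L_2}\leq c$, $\abs{\beta_n}\to\infty$, $A_0x_n-i\beta_nx_n\to0$ in $L_2$ and $f(x_n)\to0$, one must show $x_n\to0$.

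Writing $u_n:=\H x_n$, I would first record the only two boundary tools available. By Lemma \ref{lem:equivalent_norms} the sequence $\tfrac{u_n}{\beta_n}$ is bounded in $H^N(0,1;\C^d)$, and Lemma \ref{lem:interpolation} then yields $\tfrac{u_n}{\beta_n}\to0$ in $C^{N-1}([0,1];\C^d)$, so that $\tfrac{u_n^{(k)}(\z)}{\beta_n}\to0$ for all $0\leq k\leq N-1$ and $\z\in\{0,1\}$. Simultaneously, $f(x_n)\to0$ forces $u_n^{(k)}(\z)\to0$ for exactly those pairs $(\z,k)$ with $\alpha_{\z,k}>0$. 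The three index conditions on the $\alpha_{\z,k}$ are precisely calibrated so that these two facts suffice at the boundary, as will become visible below.

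The core is a family of multiplier identities. For real weights $q_j\in C^{N}([0,1];\R)$ I would test $A_0x_n-i\beta_nx_n\to0$ against multipliers of the form $\tfrac{i\,q_j}{\beta_n}u_n^{(j)}$ (with signs and powers of $i$ chosen to render the principal contributions real), take real parts, and integrate by parts using Lemma \ref{lem:real_part} and the relations $P_k^*=(-1)^{k-1}P_k$, always transferring \emph{at most one} derivative onto the factor $\H^{-1}$ so that only $\H\in W_\infty^1$ is needed while the remaining derivatives stay on $u_n$. Each identity has the schematic form $o(1)=\sp{x_n}{(\text{volume matrix})\,x_n}_{L_2}+\tfrac{1}{\beta_n}\sp{u_n^{(a)}}{(\cdots)u_n^{(b)}}_{L_2}+(\text{boundary terms})$, and the derivative-volume terms $\tfrac{1}{\beta_n}\sp{u_n^{(a)}}{u_n^{(b)}}_{L_2}$ are a priori only of size $O(\beta_n)$, hence uncontrolled. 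Exactly as in Proposition \ref{prop:exp_stability}, where the term $\tfrac{1}{\beta_n}\sp{P_2(\H x_n)'}{iq'(\H x_n)'}_{L_2}$ cancels in the combination ``twice the first identity minus the second'', I would choose the coefficients of a linear combination of these identities so that \emph{all} derivative-volume terms cancel, leaving only a clean principal term $\sp{x_n}{Qx_n}_{L_2}$ with $Q$ uniformly positive and comparable to $\H$.

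It then remains to dispose of the surviving boundary terms, and this is the step I expect to be the main obstacle. Because integration by parts never produces boundary values beyond order $N-1$, every such term is a product $u_n^{(i)}(\z)^*(\cdots)u_n^{(j)}(\z)$ carrying a weight and a factor $\beta_n^{-1}$ with $0\leq i,j\leq N-1$. The products with $i+j=N$ (forcing $1\leq i,j\leq N-1$; these are the diagonal $(K,K)$ and the shifted pairs $(k{+}1,N{-}k{-}1)$) together with the zeroth-order term $-[\,x_n^*q\H x_n\,]_0^1$ are arranged to carry weights vanishing at the non-distinguished endpoint $1-\z_0$, so they survive only at $\z_0$; there the conditions $\min(\alpha_{\z_0,0},\alpha_{\z_0,K})>0$ and $\max(\alpha_{\z_0,k+1},\alpha_{\z_0,N-k-1})>0$ guarantee that in each product at least one factor $u_n^{(i)}(\z_0)$ is annihilated by $f(x_n)\to0$ while the complementary factor, divided by $\beta_n$, tends to $0$ by the $C^{N-1}$-convergence, so the product vanishes. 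The remaining products are the symmetric ones with $i+j=N-1$ (pairs $(k,N{-}1{-}k)$, $k=0,\dots,K-1$), which appear with weights such as $q_j'$ that the construction cannot also force to vanish at $1-\z_0$; these are killed at both $\z=0$ and $\z=1$ by $\max(\alpha_{\z,k},\alpha_{\z,N-k-1})>0$ in the same manner. Once all boundary terms are shown to vanish, the combination collapses to $0\leftarrow\sp{x_n}{Qx_n}_{L_2}$ with $Q$ uniformly positive, whence $\norm{x_n}_{L_2}\to0$; this is AIEP, and exponential stability follows from Lemma \ref{lem:3.2} (via Theorem \ref{thm:gearhart}). The delicate point throughout is the bookkeeping: one must exhibit a single combination of weights that simultaneously cancels all derivative-volume terms, makes $Q$ uniformly positive, and forces the weights of the $i+j=N$ boundary products to vanish at $1-\z_0$, and then check that the boundary products left over after these requirements are exactly the ones matched by the three index hypotheses on the $\alpha_{\z,k}$.
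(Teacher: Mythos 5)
Your proposal follows exactly the route the paper intends: the paper in fact gives no proof of this proposition, stating only that it follows ``using the same proof technique as for Proposition \ref{prop:exp_stability}'', which is precisely what you outline --- reduction to the AIEP property via Lemma \ref{lem:3.2} (with $\sigma_p(A)\cap i\R=\emptyset$ obtained from asymptotic stability and compact resolvent), multipliers of the form $\tfrac{iq}{\beta_n}(\H x_n)^{(j)}$, boundary control through Lemmas \ref{lem:equivalent_norms} and \ref{lem:interpolation}, and cancellation of the uncontrolled derivative-volume terms in a suitable linear combination. Your matching of the boundary products ($i+j=N$ at $\z_0$, $i+j=N-1$ at both ends, plus the zeroth-order term) against the three index conditions is consistent with the hypotheses, and while you candidly leave the explicit weight construction unexecuted, the paper omits it entirely, so your sketch is, if anything, more detailed than the published treatment.
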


\begin{remark}
One could hope to relax the dissipativity condition in Proposition \ref{prop:exp_stability} to
	\begin{equation}
	\Re \sp{Ax}{x}_{\H} \leq - \kappa \left( \abs{(\H x)(0)}^2 + \abs{(\H x)'(0)}^2 \right),
	 \quad
	 x \in \dom(A).
	\end{equation}
However, the following example shows that even in the case $d = 1$ and $\H \equiv 1$ one generally only has asymptotic (strong) stability.
\end{remark}

\begin{example}[Schr\"odinger Equation]
Let us investigate the one-dimensional Schr\"odinger equation on the unit interval
	\begin{equation}
	i \frac{\partial \omega}{\partial t}(t,\z)
	 + \frac{\partial^2 \omega}{\partial \z^2}(t,\z)
	 = 0,
	 \quad
	 t \geq 0,
	 \
	 \z \in (0,1)
	\end{equation}
with boundary conditions
	\begin{align}
	\frac{\partial \omega}{\partial \z}(t,0)
	 &= - i k \omega(t,0),
	 \nonumber \\
	\frac{\partial \omega}{\partial \z}(t,1)
	 &= \alpha \omega(t,1),
	 \qquad t \geq 0
	\end{align}
for some constants $k > 0$ and $\alpha \in \R \setminus \{0\}$.
The energy functional is given as
	\begin{equation}
	E[\omega(t,\cdot)]
	 = \frac{1}{2} \int_0^1 \abs{\omega(t,\z)}^2 d\z,
	 \qquad t \geq 0
	\end{equation}
and the corresponding port-Hamiltonian operator is
	\begin{align}
	Ax
	 = i x''
	 \quad
	D(A)
	 = \{z \in H^2(0,1): z'(0) = - i k z(0), z'(1) = \alpha z(1) \}. 
	\end{align}
Integrating by parts and using the boundary conditions we deduce
	\begin{align}
	\Re \sp{Ax}{x}_{L_2}
	 &= \Im \left( x'(0)^* x(0) - x'(1)^* x(1) \right)
	 \nonumber \\
	 &= - \frac{1}{2} \left( k \abs{x(0)}^2 + \frac{1}{k} \abs{x'(0)}^2 \right),
	 \quad
	 x \in D(A).
	\end{align}
We claim that the semigroup is \emph{not} exponentially stable, though it is asymptotically (strongly) stable.
For this end we apply Stability Theorem \ref{thm:gearhart} and prove
	\begin{equation}
	\sup_{i \R} \norm{R(\cdot,A)}
	 = \infty.
	 \nonumber
	\end{equation}
Let $\beta > 0$ be arbitrary, hence $i \beta \in \rho(A)$.
For $f \in L_2(0,1)$ we solve $(i \beta - A) x = f$ and obtain the solution
	\begin{align}
	&x(\z)
	 = (R(i \beta,A) f)(\z)
	 \nonumber \\
	 &\ = (\cosh(\sqrt{\beta}\z) - \frac{ik}{\sqrt{\beta}} \sinh(\sqrt{\beta} \z)) x_{\beta,f}(0)
	    + \int_0^{\z} \frac{i}{\sqrt{\beta}} \sinh(\sqrt{\beta}(\z-\xi)) f(\xi) d\xi  
	\end{align}
with the value $x(0) = x_{\beta,f}(0)$ given by
	\begin{equation}
	x_{\beta,f}(0)
	 =\frac{\int_0^1 i (\cosh(\sqrt{\beta}(1-\xi)) - \frac{1}{\sqrt{\beta}} \sinh(\sqrt{\beta}(1-\xi))) f(\xi) d\xi}{(\alpha + ik) \cosh(\sqrt{\beta}) - \left(\frac{i\alpha k}{\sqrt{\beta}} + \sqrt{\beta} \right) \sinh(\sqrt{\beta})}.
	\end{equation}
Now we choose $f = \mathbf{1} \in L_2(0,1)$ and get
	\begin{align}
	&(R(i \beta,A)\mathbf{1})(\z)
	 = (\cosh(\sqrt{\beta} \z) - \frac{ik}{\sqrt{\beta}} \sinh(\sqrt{\beta}\z))
	  \nonumber \\
	  &\ \times \frac{i(\frac{1}{\sqrt{\beta}} \sinh(\sqrt{\beta}) - \frac{1}{\beta} \cosh(\sqrt{\beta}) + \frac{1}{\beta})}{(\alpha + ik) \cosh(\sqrt{\beta}) - \left(\frac{i\alpha k}{\sqrt{\beta}} + \sqrt{\beta} \right) \sinh(\sqrt{\beta})}
	   + \frac{i}{\beta} \cosh(\sqrt{\beta}\z) - \frac{i}{\beta}.
	\nonumber
	\end{align}
Thus for all $\z \in (0,1)$
	\begin{align}
	 &\beta^{3/2} \frac{(R(i \beta, A)\mathbf{1})(\z)}{e^{\sqrt{\beta}\z}}
	  \nonumber \\
	  &\ = i \frac{\cosh(\sqrt{\beta}\z)}{e^{\sqrt{\beta}\z}} \left[ \frac{\beta \sinh(\sqrt{\beta}) - \sqrt{\beta} \cosh(\sqrt{\beta}) + \sqrt{\beta}}{(\alpha + ik) \cosh(\sqrt{\beta}) - \left(\frac{i\alpha k}{\sqrt{\beta}} + \sqrt{\beta} \right) \sinh(\sqrt{\beta})} + \sqrt{\beta} \right]
	   \nonumber \\
	  &\quad + k \frac{\sinh(\sqrt{\beta}\z)}{e^{\sqrt{\beta}\z}} \left[ \frac{\sqrt{\beta} \sinh(\sqrt{\beta}) - \cosh(\sqrt{\beta}) +1}{(\alpha + ik) \cosh(\sqrt{\beta}) - \left(\frac{i\alpha k}{\sqrt{\beta}} + \sqrt{\beta} \right) \sinh(\sqrt{\beta})}\right]
	- \frac{i \sqrt{\beta}}{e^{\sqrt{\beta}\z}}
	   \nonumber \\
	  &\ = k + o(1) + i \frac{\cosh(\sqrt{\beta}\z)}{e^{\sqrt{\beta}\z}}
	   \nonumber \\
	   & \quad \times \frac{- \sqrt{\beta} \cosh(\sqrt{\beta}) + \sqrt{\beta} + (\alpha + ik) \sqrt{\beta} \cosh(\sqrt{\beta}) - i \alpha k \sinh(\sqrt{\beta})}{(\alpha + ik) \cosh(\sqrt{\beta}) - \left(\frac{i\alpha k}{\sqrt{\beta}} + \sqrt{\beta} \right) \sinh(\sqrt{\beta})}
	   \nonumber \\
	  &\ \xrightarrow{\beta \rightarrow \infty} k + i(1 - (\alpha + ik))
	  = 2k + i (1 - \alpha)
	  \not= 0,
	\end{align}
in particular
	\begin{equation}
	\norm{R(i \beta,A)\mathbf{1}}_{L_2} \xrightarrow{\beta \rightarrow + \infty} \infty.
	\nonumber
	\end{equation}
Thus the resolvents cannot be uniformly bounded on the imaginary axis and hence $A$ does not generate an exponentially stable $C_0$-semigroup.
\end{example}

However, for a special class of port-Hamiltonian systems which have some anti-diagonal structure we can weaken the assumptions on the boundary dissipation.

\begin{proposition}
\label{prop:structure_exp_stability}
Let $d$ be even and $0 < \H_1, \H_2 \in W^1_{\infty}(0,1;\C^{d/2 \times d/2})$ and $P_2^* = - P_2 \in \C^{d/2 \times d/2}$ invertible and skew-adjoint, $P_1^* = P_1$ self-adjoint, and $P_0 \in \C^{d \times d}$.
Assume that $A_0$ has the form
	\begin{align}
	A_0 x
	 &= \left( \begin{array}{cc} 0 & P_2 \\ P_2 & 0 \end{array} \right) (\H x)'' + \left( \begin{array}{cc} 0 & P_1 \\ P_1 & 0 \end{array} \right) (\H x)' + P_0 (\H x),
	 \qquad x \in \dom(A_0),
	 \nonumber
	\end{align}
where $\H(\z) = \diag(\H_1(\z), \H_2(\z))$.
Assume there exists some $\kappa > 0$ such that, for all $x = (x_1, x_2) \in \dom(A)$
	\begin{equation}
	\Re \sp{Ax}{x}_{\H}
	 \leq - \kappa \left( \abs{(\H x)(0)}^2 + \left\{ \begin{array}{c} \abs{(\H_1 x_1)'(0)}^2 \\ \text{or} \\ \abs{(\H_2 x_2)'(0)}^2 \end{array} \right\} + \left\{ \begin{array}{c} \abs{(\H_1 x_1)(1)}^2 \\ \text{or} \\ \abs{(\H_1 x_1)'(1)}^2 \end{array} \right\} \right).
	 \nonumber
	\end{equation}
If $(T(t))_{t\geq0}$ is asymptotically stable then it is exponentially stable.
\end{proposition}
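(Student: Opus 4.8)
The plan is to verify that the boundary function $f$ appearing on the right-hand side of the dissipation estimate has property AIEP for the operator $A_0$, and then to conclude with Lemma~\ref{lem:3.2}(2). By Theorem~\ref{gencontrsgr-thm-7} the operator $A$ generates a contraction $C_0$-semigroup, and asymptotic stability is assumed; since an eigenvalue on $i\R$ would produce a non-decaying orbit, asymptotic stability forces $\sigma_p(A) \cap i\R = \emptyset$, so by Lemma~\ref{lem:3.2}(2) only AIEP remains to be shown. Thus I would fix a sequence $(x_n,\beta_n) \subset \dom(A_0)\times\R$ with $\norm{x_n}_{L_2}\leq c$, $\abs{\beta_n}\to\infty$, $i\beta_n x_n - A_0 x_n \to 0$ and $f(x_n)\to 0$, and aim at $x_n\to 0$. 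Exactly as in Proposition~\ref{prop:exp_stability}, Lemma~\ref{lem:equivalent_norms} gives that $\frac{\H x_n}{\beta_n}$ is bounded in $H^2$, and Lemma~\ref{lem:interpolation} yields $\frac{\H x_n}{\beta_n}\to 0$ in $C^1$; I would additionally record the sharp interpolation bounds $\abs{(\H x_n)(\z)} = O(\beta_n^{1/4+\varepsilon})$ and $\abs{(\H x_n)'(\z)} = O(\beta_n^{3/4+\varepsilon})$ for use at the endpoints.

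The core is the multiplier computation of Proposition~\ref{prop:exp_stability}: test $i\beta_n x_n - A_0 x_n$ against $\frac{iq}{\beta_n}(\H x_n)'$ and against $\frac{iq'}{\beta_n}(\H x_n)$ for a scalar weight $q\in C^2([0,1];\R)$, integrate by parts using $P_2^*=-P_2$, $P_1^*=P_1$ and Lemma~\ref{lem:real_part}, and combine the two identities so that the interior contribution becomes $\sp{x_n}{(2q'\H - q\H')x_n}_{L_2}$. Choosing $q\leq 0$ with $q(1)=0$ and $2q'\H - q\H'$ uniformly positive, this bulk term controls $\norm{x_n}_{L_2}$ from below, so everything reduces to showing that all boundary contributions vanish. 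The decisive feature is the anti-diagonal shape of $P_2$ and $P_1$: writing $\H x_n = ((\H_1 x_{n,1}),(\H_2 x_{n,2}))$, the quadratic boundary forms coming from $P_2$ collapse into \emph{cross terms} between the two components, the leading one being $\frac{1}{\beta_n}[\,2q\,\Im((\H_1 x_{n,1})'^* P_2 (\H_2 x_{n,2})')\,]_0^1$. Such a cross term is controlled as soon as \emph{one} of the two factors is small, which is precisely why the weakened dissipation is enough: at $\z=0$ the full vector value together with a single component-derivative (the two displayed alternatives) annihilates every boundary contribution, while $q(1)=0$ removes the diagonal term $[x_n^* q\H x_n]$ and the leading cross term at $\z=1$.

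The hard part will be the mixed boundary term at $\z=1$ proportional to $q'(1)$, of the schematic form $\frac{1}{\beta_n}\Re[\,iq'(1)((\H_1 x_{n,1})'(1)^* P_2 (\H_2 x_{n,2})(1) + (\H_2 x_{n,2})'(1)^* P_2 (\H_1 x_{n,1})(1))\,]$. One cannot also impose $q'(1)=0$ without destroying the positivity of $2q'\H - q\H'$ at $\z=1$, so this term genuinely survives; under the crude bounds above exactly one of its two pieces is only $O(1)$ in each sub-case of $f$ at $\z=1$ (the piece $(\H_1 x_{n,1})'(1)^*P_2(\H_2 x_{n,2})(1)$ when $(\H_1 x_1)(1)$ is controlled, and $(\H_2 x_{n,2})'(1)^*P_2(\H_1 x_{n,1})(1)$ when $(\H_1 x_1)'(1)$ is controlled). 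To push it to $o(1)$ I would exploit the anti-diagonal coupling in the PDE near $\z=1$: integrating the component equation $i\beta_n x_{n,1} = P_2 (\H_2 x_{n,2})'' + P_1(\H_2 x_{n,2})' + (P_0\H x_n)_1 + o(1)$ (respectively its partner) against a weight vanishing at the opposite endpoint yields a localized, sideways-type energy relation that trades the uncontrolled boundary derivative of one component against the controlled $\H_1 x_1$-data supplied by $f$; combined with the sharp interpolation exponents and the $C^1$-smallness of $\frac{\H x_n}{\beta_n}$, this recovers the missing power of $\beta_n$.

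Finally, the two displayed alternatives at $\z=0$ and at $\z=1$ are handled by the same argument with the roles of the two components, respectively of function value and derivative, interchanged, so I would carry out one representative case and obtain the rest by symmetry. Once all boundary terms are shown to be $o(1)$, the uniformly positive interior term forces $\norm{x_n}_{L_2}\simeq\norm{x_n}_{2q'\H-q\H'}\to 0$, which establishes AIEP; together with $\sigma_p(A)\cap i\R=\emptyset$, exponential stability follows from Lemma~\ref{lem:3.2}(2), equivalently from Theorem~\ref{thm:gearhart}.
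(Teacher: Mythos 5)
Your overall route is exactly the one the paper intends: the published proof of this proposition consists of the single remark that it ``may be proved in similar fashion as Proposition~\ref{prop:exp_stability}'', and your execution of the shared part is correct. The reduction via Lemma~\ref{lem:3.2}(2) (asymptotic stability excludes imaginary eigenvalues, so only AIEP is needed), the interpolation exponents $\abs{(\H x_n)(\z)}=O(\abs{\beta_n}^{1/4+\varepsilon})$ and $\abs{(\H x_n)'(\z)}=O(\abs{\beta_n}^{3/4+\varepsilon})$ from Lemma~\ref{lem:interpolation}, and the observation that the anti-diagonal structure turns every boundary form into a cross term between the two components are all right; so is your accounting of which terms die at $\z=0$, which die from $q(1)=0$, and your identification of the one genuinely new obstruction: the mixed term
\begin{equation}
\frac{q'(1)}{\beta_n}\,\Im\Bigl((\H_1 x_{n,1})'(1)^* P_2 (\H_2 x_{n,2})(1)+(\H_1 x_{n,1})(1)^* P_2 (\H_2 x_{n,2})'(1)\Bigr),
\nonumber
\end{equation}
exactly one piece of which is borderline $O(\abs{\beta_n}^{2\varepsilon})$ under the crude trace bounds, and which cannot be removed by imposing $q'(1)=0$.

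The gap is that your disposal of this piece is asserted rather than proved. The ``localized, sideways-type energy relation'' is never written down: no weight is specified, no identity is derived, and it is not checked that the cross-pairings it requires do not create \emph{worse} boundary terms --- which they naively do; for instance, pairing the second component equation against a weighted $(\H_2 x_{n,2})'$ produces the boundary quantity $\frac{1}{\beta_n}\Im\bigl((\H_1x_{n,1})'(1)^*P_2(\H_2x_{n,2})'(1)\bigr)$ of size $O(\abs{\beta_n}^{1/2+2\varepsilon})$, strictly larger than the term you are trying to kill. Since this is the only step that distinguishes the proposition from Proposition~\ref{prop:exp_stability} and justifies prescribing just one first-component trace at $\z=1$, the proof is incomplete as it stands. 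A repair that stays entirely inside your multiplier scheme: keep the first multiplier $\frac{iq}{\beta_n}(\H x_n)'$, but replace the second one by the component-split multiplier $\frac{i}{\beta_n}\bigl(r_1\,\H_1x_{n,1},\,r_2\,\H_2x_{n,2}\bigr)$ with real weights $r_1,r_2$. Because the interior second-order terms to be cancelled are themselves pure cross terms $\Im\bigl((\H_1x_{n,1})'{}^*P_2(\H_2x_{n,2})'\bigr)$, the cancellation only constrains the \emph{sum} $r_1+r_2=2q'$, not $r_1,r_2$ individually (the leftover interior terms carrying $r_j'$ are $O(\abs{\beta_n}^{-1/2})$). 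One may therefore choose $r_2(1)=0$ when $f$ controls $(\H_1x_1)(1)$, respectively $r_1(1)=0$ when $f$ controls $(\H_1x_1)'(1)$, which deletes the uncontrolled piece of the mixed boundary term at the source, while the retained piece is $o(1)$ by the interpolation bounds together with $f(x_n)\to0$. The bulk form then becomes $\sp{x_{n,1}}{((q'+r_1)\H_1-q\H_1')x_{n,1}}_{L_2}+\sp{x_{n,2}}{((q'+r_2)\H_2-q\H_2')x_{n,2}}_{L_2}$, which is still uniformly positive for a suitable $q\le 0$ with $q(1)=0$, $q'>0$, and the remainder of your argument then goes through verbatim.
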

Again one may interchange $0$ and $1$ in the dissipativity estimate.
\begin{proof}
The result may be proved in similar fashion as Proposition \ref{prop:exp_stability}.
\end{proof}

\section{Hybrid Systems}
\label{sec:interconnection}

In this section we study stability of hybrid systems. The preconditions for the infinite-dimensional part of the interconnected system stay the same, except for input and output variables which we utilize for interconnection with the finite-dimensional controller.

So, instead of a static boundary condition $W \left( \begin{smallmatrix} f_{\partial, \H x} \\ e_{\partial, \H x} \end{smallmatrix} \right) = 0$ we use (part of) $W \left( \begin{smallmatrix} f_{\partial, \H x} \\ e_{\partial, \H x} \end{smallmatrix} \right)$ to define the input function for the inter\-con\-nection with a finite-di\-men\-sional system and on the other hand use the remaining information from $\left( \begin{smallmatrix} f_{\partial, \H x} \\ e_{\partial, \H x} \end{smallmatrix} \right)$ to define the output map for the interconnection structure.
So let $W, \tilde W \in \C^{Nd \times 2Nd}$ be two full rank matrices and such that the matrix $\left( \begin{smallmatrix} W \\ \tilde W \end{smallmatrix} \right)$ is invertible.
Let $1 \leq m, \tilde m \leq Nd\in \N$ and decompose $W, \tilde W$ as
	\begin{equation}
	W
	 = \left( \begin{array}{c} W_1 \\ W_2 \end{array} \right),
	 \qquad
	\tilde W
	 = \left( \begin{array}{c} \tilde W_1 \\ \tilde W_2 \end{array} \right),
	\nonumber
	\end{equation}
where $W_1 \in \C^{m \times 2nd}$ and $\tilde W_1 \in \C^{\tilde m \times 2Nd}$.
The infinite-dimensional subsystem may then be written as
 	\begin{align}
	\frac{\partial}{\partial t} x(t,\z)
	 &= \sum_{k=0}^N P_k \frac{\partial^k}{\partial \z^k} (\H(\z) x(t,\z)),
	 &t \geq 0, \z \in (0,1),
         \nonumber \\
	u_1(t)
	 &= W_1 \left( \begin{array}{c} f_{\partial,\H x} \\ e_{\partial,\H x} \end{array} \right)(t)
	 =: \mathcal{B}_1 x(t),
	 \nonumber \\
	0
	 = u_2(t)
	 &= W_2 \left( \begin{array}{c} f_{\partial,\H x} \\ e_{\partial,\H x} \end{array} \right)(t)
	 =: \mathcal{B}_2 x(t),
	 \nonumber \\
	y_1(t)
	 &= \tilde W_1 \left( \begin{array}{c} f_{\partial,\H x} \\ e_{\partial,\H x} \end{array} \right)(t)
	 =: \mathcal{C}_1 x(t),
	 \nonumber \\
	y_2(t)
	 &= \tilde W_2 \left( \begin{array}{c} f_{\partial,\H x} \\ e_{\partial,\H x} \end{array} \right)(t)
	 =: \mathcal{C}_2 x(t),	 & t \geq 0.
 	\end{align}
(Further we use the notation $\mathcal{B} := (\mathcal{B}_1, \mathcal{B}_2)$ and $\mathcal{C} = (\mathcal{C}_1, \mathcal{C}_2)$.)
Additionally we consider the space $\Xi = \C^n$ with inner product
	\begin{equation}
	\sp{\xi}{\eta}_{Q_c}
	 := \xi^* Q_c \eta,
	 \qquad
	 \eta, \xi \in \Xi,
	\end{equation}
for some positive $n \times n$-matrix $Q_c = Q_c^* > 0$.
We assume that the finite-dimensional controller has the form
	\begin{align}
	\frac{\partial}{\partial t} \xi(t)
	 &= A_c \xi(t) + B_c u_c(t),
	 \nonumber \\
	y_c(t)
	 &= C_c \xi(t) + D_c u_c(t),
	 \quad
	 t \geq 0
	\end{align}
for some matrices $A_c, B_c, C_c, D_c$ of suitable dimension.
We are interested in situations without external input signal and interconnect the two subsystems by standard feedback interconnection
	\begin{equation}
	u_c = y_1, \qquad u_1 = - y_c.
	\end{equation}
Then we obtain an operator $\A$ on the product space $X \times \Xi$ which we equip with the canonical inner product
	\begin{equation}
	\sp{(x,\xi)}{(y,\eta)}_{\H,Q_c}
	 = \sp{x}{y}_{\H} + \sp{\xi}{\eta}_{Q_c},
	 \quad
	 (x,\xi), (y,\eta) \in X \times \Xi.
	\end{equation}
Namely,
	\begin{equation}
	\A \left( \begin{array}{c} x \\ \xi \end{array} \right)
	 = \left( \begin{array}{cc} A_0 & 0 \\ B_c \mathcal{C}_1 & A_c \end{array} \right) \left( \begin{array}{c} x \\ \xi \end{array} \right)
	\end{equation}
on the domain
	\begin{equation}
	\dom(\A)
	 = \left\{ \left( \begin{array}{c} x \\ \xi \end{array} \right) \in \dom(A_0) \times \Xi: W_{cl} \left( \begin{array}{c} f_{\partial,\H x} \\ e_{\partial,\H x} \\ \xi \end{array} \right) = 0 \right\}
	\end{equation}
with the matrix $W_{cl}$ given by
	\begin{equation}
	W_{cl}
	 = \left( \begin{array}{cc} W_1 + D_c \tilde W_1 & C_c \\ W_2 & 0 \end{array} \right).
	\end{equation}

\subsection{Semigroup Generation}
\label{subsec:interconnection_generation}

Similar to the pure infinite-dimensional case we have the following generation result which includes the case of strictly passive controllers as in Theorem 4 of \cite{RamirezZwartLeGorrec_2013}.

\begin{theorem}
\label{Thm:4.1}
If the operator $\mathcal{A}$ is dissipative, i.e.
	\begin{equation}
	\sp{\mathcal{A}(x,\xi)}{(x,\xi)}_{\H,Q_c}
	 \leq 0,
	 \qquad
	 (x,\xi) \in \dom(\mathcal{A}),
	 \label{eqn:dissipativity_ic}
	\end{equation}
then it generates a contractive $C_0$-semigroup $(\T(t))_{t\geq0}$ on $X \times \Xi$.
Moreover, $\A$ has compact resolvent.
\end{theorem}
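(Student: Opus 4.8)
The plan is to apply the Lumer--Phillips Theorem II.3.15 in \cite{EngelNagel_2000}. Since dissipativity of $\A$ is assumed in \eqref{eqn:dissipativity_ic}, it suffices to verify that $\A$ is densely defined and that the range condition $\ran(\lambda_0 I - \A) = X \times \Xi$ holds for some $\lambda_0 > 0$. Density is the easier point: given $(x_0,\xi_0) \in X \times \Xi$ and $\varepsilon > 0$, the full rank of $W$ together with the invertibility of $\left(\begin{smallmatrix} Q & -Q \\ I & I\end{smallmatrix}\right)$ lets me prescribe the boundary trace of some $x_1$ with $\H x_1 \in H^N(0,1;\C^d)$, $\norm{x_1}_{L_2} < \varepsilon$, and $W_{cl}(f_{\partial,\H x_1}, e_{\partial,\H x_1}, \xi_0)^\top = 0$; approximating $x_0 - x_1$ by functions whose $\H$-image lies in $C_c^\infty(0,1;\C^d)$ (and hence have vanishing boundary trace) then produces elements of $\dom(\A)$ arbitrarily close to $(x_0,\xi_0)$.

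For the range condition I would first fix $\lambda_0 > 0$ large enough that $\lambda_0 \in \rho(A_c)$, which is possible since $A_c$ is a finite matrix. Given $(g,\eta) \in X \times \Xi$, the equation $(\lambda_0 I - \A)(x,\xi) = (g,\eta)$ decouples into the boundary value problem $\lambda_0 x - A_0 x = g$ subject to $W_{cl}(f_{\partial,\H x}, e_{\partial,\H x},\xi)^\top = 0$, together with $(\lambda_0 I - A_c)\xi = \eta + B_c \mathcal{C}_1 x$; the latter determines $\xi = (\lambda_0 I - A_c)^{-1}(\eta + B_c \mathcal{C}_1 x)$ as an affine function of the boundary data of $x$. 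Since $P_N$ is invertible, the homogeneous equation $\lambda_0 x = A_0 x$ has an $Nd$-dimensional solution space $V_{\lambda_0} \subset \dom(A_0)$, so the general solution of the inhomogeneous ODE reads $x = x_p + x_h$ with $x_h \in V_{\lambda_0}$ free. Substituting $\xi$ into the closed-loop boundary condition turns solvability into an affine equation $\Psi(x_h) = b(g,\eta)$, where $\Psi : V_{\lambda_0} \to \C^{Nd}$ is the linear map $x_h \mapsto W_{cl}(f_{\partial,\H x_h}, e_{\partial,\H x_h}, (\lambda_0 I - A_c)^{-1}B_c\mathcal{C}_1 x_h)^\top$.

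The main obstacle is the invertibility of $\Psi$, i.e.\ that this $Nd \times Nd$ system is always solvable; I would bypass any explicit determinant computation by a dimension argument. Dissipativity gives, for $\lambda_0 > 0$, the estimate $\Re \sp{(\lambda_0 I - \A)(x,\xi)}{(x,\xi)}_{\H,Q_c} \geq \lambda_0 \norm{(x,\xi)}^2$, so $\lambda_0 I - \A$ is injective. If $x_h \in \ker \Psi$, then setting $\xi := (\lambda_0 I - A_c)^{-1} B_c \mathcal{C}_1 x_h$ yields $(x_h,\xi) \in \dom(\A)$ with $(\lambda_0 I - \A)(x_h,\xi) = 0$, whence $x_h = 0$. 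Thus $\Psi$ is injective, and as a linear map between spaces of equal dimension $Nd$ it is bijective; therefore $\Psi(x_h) = b(g,\eta)$ is solvable, the range condition holds, and by Lumer--Phillips $\A$ generates a contraction $C_0$-semigroup $(\T(t))_{t\geq0}$.

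Finally, for the compact resolvent I would use that $\lambda_0 \in \rho(\A)$ and that $R(\lambda_0,\A)$ maps $X \times \Xi$ into $\dom(\A)$. On $\dom(\A)$ the graph norm controls $\norm{\H x}_{H^N}$ by Lemma \ref{lem:equivalent_norms} as well as $\abs{\xi}$; since $H^N(0,1;\C^d) \hookrightarrow L_2(0,1;\C^d)$ compactly and $\Xi$ is finite-dimensional, bounded subsets of $\dom(\A)$ are relatively compact in $X \times \Xi$. Hence $R(\lambda_0,\A)$ is compact, and by the resolvent identity so is $R(\lambda,\A)$ for every $\lambda \in \rho(\A)$.
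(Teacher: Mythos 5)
Your argument is correct, and its core step --- the range condition --- follows a genuinely different route from the paper's. Both proofs eliminate the controller state via $\xi = (\lambda I - A_c)^{-1}(\eta + B_c \mathcal{C}_1 x)$, but the paper then homogenizes the resulting $\lambda$-dependent boundary condition with a lifting operator from Corollary \ref{cor:bc-cor}, packages what remains as a static port-Hamiltonian operator $\tilde A_{cl}$ with boundary matrix $\tilde W_{cl}$, verifies its dissipativity by testing $\A$ at the pair $\bigl(x, (\lambda I - A_c)^{-1} B_c \mathcal{C}_1 x\bigr)$, and invokes the already-established Theorem \ref{gencontrsgr-thm-7} to conclude $\lambda \in \rho(\tilde A_{cl})$; you instead solve the resolvent equation directly as a boundary value problem for the ODE, using the $Nd$-dimensional homogeneous solution space $V_{\lambda_0}$ and deducing injectivity (hence bijectivity, by equality of dimensions) of the shooting map $\Psi$ from the dissipativity bound $\Re \sp{(\lambda_0 I - \A)z}{z}_{\H,Q_c} \geq \lambda_0 \norm{z}^2$ --- a nice trick, since it recycles the very hypothesis of the theorem to settle the linear algebra. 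The paper's reduction buys reuse of the generation machinery and stays entirely within the port-Hamiltonian framework; your route is more elementary and self-contained, at the price of the (standard, but worth stating since $\H$ is merely measurable and uniformly positive) Carath\'eodory facts that the first-order rewriting of $\lambda_0 x - A_0 x = g$ with $P_N$ invertible has an $Nd$-dimensional homogeneous solution space and a particular solution with $\H x_p \in H^N(0,1;\C^d)$ for $g \in L_2$. One imprecision in your density step: full rank of $W$ alone does not let you solve the closed-loop trace condition for arbitrary $\xi_0$; what you need is full rank of $\hat W_{cl} = \left( \begin{smallmatrix} W_1 + D_c \tilde W_1 \\ W_2 \end{smallmatrix} \right)$, which the paper derives from the standing invertibility of $\left( \begin{smallmatrix} W \\ \tilde W \end{smallmatrix} \right)$ via the factorization $\hat W_{cl} = \left( \begin{smallmatrix} I & \left( \begin{smallmatrix} D_c \\ 0 \end{smallmatrix} \right) \end{smallmatrix} \right) \left( \begin{smallmatrix} W \\ \tilde W \end{smallmatrix} \right)$; with that repair (and noting the $\varepsilon$-smallness of your $x_1$ is not actually needed, since you translate by it anyway) your density argument coincides in substance with the paper's, which realizes the lift through Corollary \ref{cor:bc-cor}. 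Your compact-resolvent argument, via Lemma \ref{lem:equivalent_norms}, the compact embedding $H^N(0,1;\C^d) \hookrightarrow L_2(0,1;\C^d)$, and finite-dimensionality of $\Xi$, supplies exactly the detail the paper's proof leaves implicit.
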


\begin{remark}
Similar to the pure infinite-dimensional case one sees that the condition
	\begin{equation}
	\Re P_0
	 := \frac{1}{2} ( P_0 + P_0^*)
	 \leq 0
	 \nonumber
	\end{equation}
is necessary for $\mathcal{A}$ to generate a contraction $C_0$-semigroup.
\end{remark}

For the proof we need the following results which follow from step 2 in the proof of Theorem 4.2 in \cite{LeGorrecZwartMaschke_2005}.

\begin{lemma}
\label{lem:bc-lemma}
Let $d, N \in \N$ and $W \in \C^{Nd \times 2Nd}$ have full rank.
Define $\Phi: H^N(0,1;\C^d) \rightarrow \C^{2Nd} = (\C^d)^{2N}$ by $\Phi_j(x) = x^{(j-1)}(1), \Phi_{j+N}(x) = x^{(j-1)}(0)$ for $j = 1, \ldots, N$.
Then there exists an operator $B \in \B(\C^{Nd};H^N(0,1;\C^d))$ such that
	\begin{equation}
	 (W \circ \Phi)B
	  = I_{\C^{Nd}}.
	  \nonumber
	\end{equation}
\end{lemma}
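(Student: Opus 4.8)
The plan is to factor the desired operator $B$ as $B = E \circ W^{\dagger}$, where $E \in \B(\C^{2Nd}; H^N(0,1;\C^d))$ is a bounded right inverse of the trace map $\Phi$ and $W^{\dagger} \in \C^{2Nd \times Nd}$ is a purely algebraic right inverse of the full-rank matrix $W$. Once both factors are available, the verification is immediate: since $\Phi E = I_{\C^{2Nd}}$ and $W W^{\dagger} = I_{\C^{Nd}}$, we get $(W \circ \Phi) B = W \Phi E W^{\dagger} = W W^{\dagger} = I_{\C^{Nd}}$, and $B$ is bounded as a composition of a matrix with a bounded operator.

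First I would construct $E$, i.e. a bounded right inverse of the jet-trace operator $\Phi$. The key observation is that $\Phi$ prescribes the $2N$ vectors $x(1),\dots,x^{(N-1)}(1)$ and $x(0),\dots,x^{(N-1)}(0)$ independently, so it suffices to solve a two-point Hermite interpolation problem. Concretely, I would fix scalar functions $\varphi_{j,0}, \varphi_{j,1} \in C^{\infty}([0,1];\R)$ for $j = 0,\dots,N-1$ satisfying the biorthogonality relations $\varphi_{j,a}^{(k)}(b) = \delta_{jk}\,\delta_{ab}$ for all $a,b \in \{0,1\}$ and $j,k \in \{0,\dots,N-1\}$ (such functions exist by the standard Hermite interpolation construction, e.g. products of suitable polynomials with cutoffs supported near the respective endpoint). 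Given boundary data encoded by vectors $v_j^{(a)} \in \C^d$, I then set $E$ to send this data to the function $\z \mapsto \sum_{j=0}^{N-1} \bigl( \varphi_{j,1}(\z)\, v_j^{(1)} + \varphi_{j,0}(\z)\, v_j^{(0)} \bigr)$. Because each $\varphi_{j,a}$ is smooth and fixed, this map is linear and bounded from $\C^{2Nd}$ into $H^N(0,1;\C^d)$, and the biorthogonality relations give exactly $\Phi E = I_{\C^{2Nd}}$.

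Next I would produce $W^{\dagger}$. Since $W \in \C^{Nd \times 2Nd}$ has full rank $Nd$ (full row rank), the matrix $W W^*$ is invertible, so the Moore--Penrose type right inverse $W^{\dagger} := W^* (W W^*)^{-1}$ satisfies $W W^{\dagger} = I_{\C^{Nd}}$; this is elementary finite-dimensional linear algebra and requires no analysis.

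The only genuinely substantive step is the construction of $E$, and even there the work is the routine existence of Hermite interpolants $\varphi_{j,a}$; the boundedness of $E$ is automatic since it is a finite linear combination with fixed smooth coefficients. I expect no real obstacle beyond setting up the two-point Hermite basis cleanly, and I would keep that construction short since it is classical. Everything downstream is the trivial composition identity recorded above.
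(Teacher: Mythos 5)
Your proof is correct and complete: the two-point Hermite basis $\varphi_{j,a}$ with $\varphi_{j,a}^{(k)}(b) = \delta_{jk}\delta_{ab}$ yields a bounded right inverse $E$ of $\Phi$ (boundedness is automatic since $E$ is linear on the finite-dimensional space $\C^{2Nd}$), the full row rank of $W$ makes $W^{\dagger} = W^*(WW^*)^{-1}$ a right inverse, and $B = E W^{\dagger}$ then satisfies $(W \circ \Phi)B = W W^{\dagger} = I_{\C^{Nd}}$ as claimed. The paper does not prove this lemma itself but refers to step 2 of the proof of Theorem 4.2 in \cite{LeGorrecZwartMaschke_2005}, where essentially the same two ingredients appear (an explicit function with prescribed boundary jet, combined with the full rank of $W$), so your argument is in substance the intended one, with the added merit of being self-contained.
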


\begin{corollary}
\label{cor:bc-cor}
Let $W \in \C^{Nd \times 2Nd}$ have full rank and let
	\begin{equation}
	\mathcal{B} x := W \left( \begin{array}{c} f_{\partial, \H x} \\ e_{\partial, \H x} \end{array} \right),
	 \quad x \in \dom(A_0).
	\end{equation}
Then there exists $B \in \B(\C^{Nd};\dom(A_0))$ with $\mathcal{B} B = I_{\C^{Nd}}$.
\end{corollary}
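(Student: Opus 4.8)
The plan is to reduce the statement to Lemma \ref{lem:bc-lemma} by rewriting $\mathcal{B}$ in terms of the plain boundary trace $\Phi$ and then composing with the multiplication operator $\H^{-1}$. First I would observe that, by the definition of the boundary port variables in (\ref{gencontrsgr-eqn-4}), one has $\mathcal{B} x = \tilde W \Phi(\H x)$ for every $x \in \dom(A_0)$, where $\tilde W := \frac{1}{\sqrt{2}} W \left( \begin{smallmatrix} Q & -Q \\ I & I \end{smallmatrix} \right) \in \C^{Nd \times 2Nd}$. Since the matrix $\left( \begin{smallmatrix} Q & -Q \\ I & I \end{smallmatrix} \right)$ is invertible (Lemma 3.4 in \cite{LeGorrecZwartMaschke_2005}) and $W$ has full rank $Nd$, the product $\tilde W$ again has full rank $Nd$.

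Next I would apply Lemma \ref{lem:bc-lemma} to the full-rank matrix $\tilde W$, noting that the trace operator $\Phi$ appearing there is exactly the one used in (\ref{gencontrsgr-eqn-4}). This yields an operator $B_0 \in \B(\C^{Nd};H^N(0,1;\C^d))$ with $(\tilde W \circ \Phi) B_0 = I_{\C^{Nd}}$. I would then set $B := \H^{-1} B_0$, that is, $B c := \H^{-1}(B_0 c)$ for $c \in \C^{Nd}$. Since $\H(B c) = B_0 c \in H^N(0,1;\C^d)$, each $B c$ lies in $\dom(A_0)$, so $B$ indeed maps into $\dom(A_0)$.

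To verify the two required properties, I would first compute $\mathcal{B} B c = \tilde W \Phi(\H B c) = \tilde W \Phi(B_0 c) = (\tilde W \circ \Phi) B_0 c = c$, which gives $\mathcal{B} B = I_{\C^{Nd}}$. For boundedness into $\dom(A_0)$, equipped with its graph norm, I would invoke Lemma \ref{lem:equivalent_norms}: the graph norm is equivalent to $\norm{\H \cdot}_{H^N}$, whence $\norm{B c}_{A_0} \simeq \norm{\H B c}_{H^N} = \norm{B_0 c}_{H^N} \leq \norm{B_0}\,\abs{c}$, so $B \in \B(\C^{Nd};\dom(A_0))$.

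The argument is essentially bookkeeping; the only point requiring care is the boundedness claim, where one must measure $B c$ in the graph norm of $\dom(A_0)$ and exploit the norm equivalence of Lemma \ref{lem:equivalent_norms} rather than the ambient $L_2$-norm, for which no such bound would be available. Keeping track of the two occurrences of $\Phi$ (the trace of $\H x$ in the definition of $\mathcal{B}$ versus the trace appearing in the lemma) and of the invertibility of the transformation matrix linking $W$ to $\tilde W$ is the other place where a slip could occur.
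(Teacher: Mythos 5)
Your proof is correct and is precisely the argument the paper intends: Corollary \ref{cor:bc-cor} is stated without explicit proof as an immediate consequence of Lemma \ref{lem:bc-lemma}, and the intended derivation is exactly your reduction via the invertible matrix $\frac{1}{\sqrt{2}}\left( \begin{smallmatrix} Q & -Q \\ I & I \end{smallmatrix} \right)$ (whose invertibility the paper notes via Lemma 3.4 in \cite{LeGorrecZwartMaschke_2005}) together with composition by $\H^{-1}$ and the norm equivalence of Lemma \ref{lem:equivalent_norms}. Your attention to the boundedness in the graph norm and to the distinction between $\Phi(\H x)$ and $\Phi(x)$ fills in the bookkeeping the paper leaves implicit, with no gaps.
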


\begin{proof}[Proof of Theorem \ref{Thm:4.1}.]
The operator $\A$ is densely defined.
Namely let $(x,\xi) \in X \times \Xi$ be arbitrary.
Observe that the matrix
	\begin{equation}
	\hat W_{cl}
	 := \left( \begin{array}{c} W_1 + D_c \tilde W_1 \\ W_2 \end{array} \right)
	 = \left( \begin{array}{cc} I_{\C^{Nd}} & \left( \begin{array}{c} D_c \\ 0 \end{array} \right) \end{array} \right) \left( \begin{array}{c} W \\ \tilde W \end{array} \right),
	\end{equation}
has full rank $Nd$ since $\left( \begin{smallmatrix} W \\ \tilde W \end{smallmatrix} \right)$ is invertible and $\left( \begin{smallmatrix} I & \left( \begin{smallmatrix} D_c \\ 0 \end{smallmatrix} \right) \end{smallmatrix} \right)$ has full rank.
Identifying $\Xi \equiv \Xi \times \{0\} \subset \C^{Nd}$, Corollary \ref{cor:bc-cor} shows that there exists $\hat B \in \B(\Xi,\dom(A_0))$ with
	\begin{equation}
	\hat W_{cl} \left( \begin{array}{c} f_{\partial, \H \hat B \xi} \\ e_{\partial, \H \hat B \xi} \end{array} \right)
	 = \left( \begin{array}{c} \xi \\ 0 \end{array} \right),
	 \quad
	 \xi \in \Xi.
	\end{equation}
Moreover since $C_c^{\infty}(0,1;\C^d)$ is dense in $X$ there exists a sequence $\left( \phi_n \right)_{n\geq1} \subset C_c^{\infty}(0,1;\C^d)$ converging to $x + \hat B C_c\xi$.
Note that then 
	\begin{equation}
	\dom(\A) \ni (x_n, \xi_n)
	 := (\phi_n - \hat B C_c \xi, \xi)
	 \xrightarrow{n \rightarrow \infty} (x,\xi) \in X \times \Xi,
	\end{equation}
so $\dom(\A)$ is densely defined.
Thanks to the Lumer-Phillips Theorem II.3.15 in \cite{EngelNagel_2000} and the dissipativity of $\mathcal{A}$,
it remains to check that $\ran (\lambda I - \mathcal{A}) = X \times \Xi$ for some $\lambda > 0$.
To this end let $\lambda > \max(0,s(A_c))$ where $s(A_c) := \sup \{\Re \lambda: \lambda \in \sigma(A_c)\}$ denotes the spectral bound of $A_c$.
Further let $(y, \eta) \in X \times \Xi$ be given.
We are looking for some $(x, \xi) \in \dom(\mathcal{A})$ such that
	\begin{equation}
	\lambda (x, \xi) - \mathcal{A} (x, \xi)
	 = (y, \eta),
	 \nonumber
	\end{equation}
or equivalently
	\begin{align}
	 (\lambda I_X - A_0) x
	  &= y,
	  \nonumber \\
	 (\lambda I_{\Xi} - A_c) \xi - B_c \mathcal{C}_1 x
	  &= \eta,
	  \label{eqn:4.1_1} \\
	 (\mathcal{B}_1 + D_c \mathcal{C}_1) x + C_c \xi
	  &= 0,
	  \nonumber \\
	 W_2 \left( \begin{array}{c} f_{\partial,\H x} \\ e_{\partial,\H x} \end{array} \right) &= 0.
	 \nonumber
	\end{align}
Solving (\ref{eqn:4.1_1}) for $\xi$ and substitution lead to
	\begin{align}
	(\lambda I_X - A_0) x
	 &= y,
	 \nonumber \\
	 \tilde W_{cl} \left( \begin{array}{c} f_{\partial,\H x} \\ e_{\partial,\H x} \end{array} \right)
	  &= \left( \begin{array}{c} -C_c (\lambda I_{\Xi} - A_c)^{-1} \eta \\ 0 \end{array} \right)
	  =: \left( \begin{array}{c} \tilde \eta \\ 0 \end{array} \right)
	\end{align}
where
	\begin{equation}
	 \tilde W_{cl}
	  := \left( \begin{array}{c} W_1 + (D_c + C_c (\lambda I_{\Xi} - A_c)^{-1} B_c) \tilde W_1 \\ W_2 \end{array} \right).
	\end{equation}
Using the operator $\tilde B \in \B(\Xi,\dom(A_0))$ from Corollary \ref{cor:bc-cor} for $\tilde W_{cl}$ we set $x_{new} := x - \tilde B \tilde \eta$ and get the equivalent system
	\begin{align}
	(\lambda I_X - A_0) x_{new}
	 &= y - (\lambda I_X - A_0) \tilde B \tilde \eta,
	 \nonumber \\
	\tilde W_{cl} \left( \begin{array}{c} f_{\partial, \H x_{new}} \\ e_{\partial, \H x_{new}} \end{array} \right)
	 &= 0.
	 \label{eqn:4.1_2}
	\end{align}
Let us consider the operator $\tilde A_{cl} = A_0|_{\dom(\tilde A_{cl})}$ with domain
	\begin{equation}
	\dom(\tilde A_{cl})
	 = \left\{ x \in \dom(A_0): \tilde W_{cl} \left( \begin{array}{c} f_{\partial, \H x} \\ e_{\partial, \H x} \end{array} \right)
	 = 0 \right\}.
	\end{equation}
For any $x \in D(\tilde A_{cl})$ we set $\xi = (\lambda - A_c)^{-1} B_c \mathcal{C}_1 x \in \Xi$ and obtain
	\begin{align}
	W_{cl} \left( \begin{array}{c} f_{\partial, \H x} \\ e_{\partial, \H x} \\ \xi \end{array} \right)
 	 &= \left( \begin{array}{cc} W_1 + D_c \tilde W_1 & C_c \\ W_2 & 0 \end{array} \right) \left( \begin{array}{c} f_{\partial, \H x} \\ e_{\partial, \H x} \\ \xi \end{array} \right)
	 \nonumber \\
 	 &= \tilde W_{cl} \left( \begin{array}{c} f_{\partial, \H x} \\ e_{\partial, \H x} \end{array} \right)
 	 = 0,
	\end{align}
thus $(x,\xi) \in \dom(\mathcal{A})$ and we have
	\begin{align}
	\Re \sp{\tilde A_{cl} x}{x}_{X}
	 &= \Re \sp{\mathcal{A}(x,\xi)}{(x,\xi)}_{X \times \Xi}
	  - \Re \sp{B_c \mathcal{C}_1 x + A_c \xi}{\xi}_{\Xi}
	 \nonumber \\
	 &\leq - \Re \sp{B_c \mathcal{C}_1 x + A_c (\lambda - A_c)^{-1} B_c \mathcal{C}_1 x}{\xi}_{\Xi}
	 \nonumber \\
	 &= - \Re \sp{\lambda (\lambda - A_c)^{-1} B_c \mathcal{C}_1 x}{(\lambda - A_c)^{-1} B_c \mathcal{C}_1 x}_{\Xi}
	 \leq 0,
	\end{align}
for all $x \in \dom(\tilde A_{cl})$.
Hence $\tilde A_{cl}$ generates a contractive $C_0$-semigroup on $X$ by Theorem \ref{gencontrsgr-thm-7}.
Consequently, $(\lambda I - \tilde A_{cl})^{-1} \in \B(X)$ exists and we then get a unique solution $x_{new}$ of (\ref{eqn:4.1_2}) which implies the existence of $(x,\xi) \in \dom(\mathcal{A})$,
	\begin{equation}
	x = x_{new} + \tilde B \tilde \eta,
	 \qquad
	\xi = (\lambda - A_c)^{-1} (\eta + B_c \mathcal{C}_1 x),
	\end{equation}
such that $(\lambda I - \mathcal{A})(x,\xi) = (y,\eta)$.
It follows $\ran (\lambda I - \mathcal{A}) = X$ and the Lumer-Phillips Theorem II.3.15 in \cite{EngelNagel_2000} yields the result.
\end{proof}

\subsection{Asymptotic Behaviour}
\label{subsec:interconnection_stability}

For dissipative hybrid systems we obtain essentially the same stability results as in the pure infinite-dimensional case.

\begin{proposition}
\label{prop:hybrid_strongstability}
Assume that $s(A_c) < 0$ and for a function $f: \dom(A_0) \rightarrow \R_+$
	\begin{equation}
	\Re \sp{\mathcal{A}(x,\xi)}{(x,\xi)}_{\H,Q_c}
	 \leq - f(x),
	 \quad
	 (x,\xi) \in \dom(\mathcal{A}).
	\end{equation}
	\begin{enumerate}
	\item If $f$ has property ASP then $(\T(t))_{t\geq0}$ is asymptotically (strongly) stable.
	\item If $f$ has property AIEP and $\sigma_p(\A) \cap i\R = \emptyset$ then $(\T(t))_{t\geq0}$ is (uniformly) exponentially stable.
	\item If $f$ has property ESP then $(\T(t))_{t\geq0}$ is (uniformly) exponentially stable.
	\end{enumerate}
\end{proposition}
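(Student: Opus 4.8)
The plan is to apply Lemma \ref{lem:3.2} directly to the closed-loop operator $\A$ on $X \times \Xi$. By Theorem \ref{Thm:4.1} this operator generates a contraction $C_0$-semigroup and has compact resolvent, and the standing hypothesis can be rewritten as $\Re \sp{\A(x,\xi)}{(x,\xi)}_{\H,Q_c} \leq - \tilde f(x,\xi)$ for the function $\tilde f(x,\xi) := f(x)$ on $\dom(\A)$. Thus it suffices to show that $\tilde f$ possesses, \emph{for the operator} $\A$, the property ASP (for part~1), AIEP together with $\sigma_p(\A) \cap i\R = \emptyset$ (for part~2), or ESP (for part~3); the three conclusions then follow verbatim from the corresponding parts of Lemma \ref{lem:3.2}. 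Since $\tilde f$ only depends on the infinite-dimensional component $x$, the new difficulty compared with the purely infinite-dimensional situation is to recover decay of the controller state $\xi$, and this is exactly where the assumption $s(A_c) < 0$ enters.

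To verify ASP of $\tilde f$ for $\A$, I would assume $i\beta(x,\xi) = \A(x,\xi)$ with $\beta \in \R$ and $\tilde f(x,\xi) = f(x) = 0$. Reading off the first component gives $i\beta x = A_0 x$ with $x \in \dom(A_0)$, so the assumed ASP of $f$ (for $A_0$) forces $x = 0$. The second component then collapses to $A_c \xi = i\beta \xi$; since $s(A_c) < 0$ we have $i\beta \in \rho(A_c)$, hence $\xi = 0$. Therefore $(x,\xi) = 0$, so $\tilde f$ has ASP for $\A$, and in particular $\sigma_p(\A) \cap i\R = \emptyset$.

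For AIEP of $\tilde f$ for $\A$, I would take $((x_n,\xi_n),\beta_n)_{n \geq 1} \subset \dom(\A) \times \R$ with $\sup_n \norm{(x_n,\xi_n)} \leq c$, $\abs{\beta_n} \to \infty$, $\A(x_n,\xi_n) - i\beta_n(x_n,\xi_n) \to 0$ and $f(x_n) \to 0$. The first component yields $A_0 x_n - i\beta_n x_n \to 0$ with $\norm{x_n}_{L_2} \leq c$, so the assumed AIEP of $f$ (for $A_0$) gives $x_n \to 0$ in $X$. It then remains to establish $\xi_n \to 0$. The second component reads $(i\beta_n - A_c)\xi_n = B_c \mathcal{C}_1 x_n + o(1)$, and $s(A_c) < 0$ gives $\norm{(i\beta_n - A_c)^{-1}} = O(1/\abs{\beta_n})$. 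By Lemma \ref{lem:equivalent_norms} the sequence $\frac{\H x_n}{\beta_n}$ is bounded in $H^N(0,1;\C^d)$, while $\norm{\frac{\H x_n}{\beta_n}}_{L_2} \leq \frac{Mc}{\abs{\beta_n}} \to 0$, so Lemma \ref{lem:interpolation} shows $\frac{\H x_n}{\beta_n} \to 0$ in $C^{N-1}([0,1];\C^d)$. Hence every boundary trace $(\H x_n)^{(k)}(0), (\H x_n)^{(k)}(1)$ with $0 \leq k \leq N-1$ is $o(\abs{\beta_n})$, and consequently $\mathcal{C}_1 x_n = o(\abs{\beta_n})$. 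Combining the two bounds, $\norm{\xi_n} \leq \norm{(i\beta_n - A_c)^{-1}}\big(\norm{B_c}\,\norm{\mathcal{C}_1 x_n} + o(1)\big) = O(1/\abs{\beta_n}) \cdot o(\abs{\beta_n}) \to 0$, so $(x_n,\xi_n) \to 0$ and $\tilde f$ has AIEP for $\A$.

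With ASP and AIEP of $\tilde f$ for $\A$ in hand, part~1 follows from the first part of Lemma \ref{lem:3.2}, part~2 from the second part together with the assumed $\sigma_p(\A) \cap i\R = \emptyset$, and part~3 from the third part. I expect the genuinely delicate step to be the decay $\xi_n \to 0$: the boundary output $\mathcal{C}_1 x_n$ may a priori grow like $\abs{\beta_n}$, and one must offset this growth against the $O(1/\abs{\beta_n})$ resolvent decay of $A_c$, which is why both the interpolation estimate of Lemma \ref{lem:interpolation} and the strict stability $s(A_c) < 0$ of the controller are indispensable.
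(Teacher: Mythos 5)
Your proposal is correct and follows essentially the same route as the paper: the generation and compact-resolvent facts come from Theorem \ref{Thm:4.1}, the ASP of $f$ for $A_0$ plus $i\beta \in \rho(A_c)$ kills eigenvalues on $i\R$, and the decay $\xi_n \to 0$ is obtained exactly as in the paper via Lemma \ref{lem:equivalent_norms} and the interpolation Lemma \ref{lem:interpolation}, which show the boundary output $\mathcal{C}_1 x_n$ is $o(\abs{\beta_n})$. Your only cosmetic deviations --- routing the conclusion through Lemma \ref{lem:3.2} with $\tilde f(x,\xi) := f(x)$ rather than citing Theorems \ref{thm:Arendt-Batty} and \ref{thm:gearhart} directly, and using the resolvent bound $\norm{(i\beta_n - A_c)^{-1}} = O(1/\abs{\beta_n})$ instead of dividing the controller equation by $\beta_n$ --- are equivalent repackagings of the paper's argument.
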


\begin{proof}
1.) Asymptotic stability:
By Theorem \ref{Thm:4.1} $\mathcal{A}$ generates a contractive $C_0$-semigroup and has compact resolvent, so $\sigma(\mathcal{A}) = \sigma_p(\mathcal{A})$.
We want to use Stability Theorem \ref{thm:Arendt-Batty} and thus prove that $i \R \cap \sigma_p(\mathcal{A}) = \emptyset$.
Let $\beta \in \R$ and $(x, \xi) \in \dom(\mathcal{A})$ such that
	\begin{equation}
	i \beta (x, \xi) = \mathcal{A} (x, \xi),
	\end{equation}
so
 	\begin{align}
 	0
	 &= \Re \sp{i \beta(x,\xi)}{(x,\xi)}_{\H,Q_c}
	 \nonumber \\
	 &= \Re \sp{\A(x,\xi)}{(x,\xi)}_{\H ,Q_c}
	 \leq - f(x)
	\end{align}
and by property ASP $x = 0$.
The finite-dimensional component reads
	\begin{equation}
	i \beta \xi = B_c \mathcal{C}_1 x + A_c \xi,
	 \ \text{i.e.} \
	\xi = (i \beta - A_c)^ {-1} B_c \mathcal{C}_1 x,
	\end{equation}
then also $\xi = (i\beta - A_c)^{-1} B_c \mathcal{C}_1 x = 0$.
As a result, $\sigma(\mathcal{A}) = \sigma_p(\mathcal{A}) \subset \C_0^-$ and $(\T(t))_{t\geq0}$ is asymptotically stable due to Stability Theorem \ref{thm:Arendt-Batty}.

2.) Exponential stability:
Let a sequence $\left((x_n, \xi_n, \beta_n)\right)_{n \geq 1} \subset \dom(\mathcal{A}) \times \R$ with $\norm{(x_n, \xi_n)}_{X \times \Xi} \leq c$, $\abs{\beta_n} \xrightarrow{n \rightarrow \infty} + \infty$ such that
	\begin{equation}
	i \beta_n (x_n, \xi_n) - \mathcal{A} (x_n, \xi_n) \xrightarrow{n \rightarrow \infty} 0
	\end{equation}
be given.
Since $\norm{(x_n, \xi_n)}_{X \times \Xi} \leq c$ it especially follows that
	\begin{align}
	f(x)
	 &\leq - \Re \sp{\mathcal{A}(x_n,\xi_n)}{(x_n,\xi_n)}_{\H,Q_c}
	 \nonumber \\
	 &= \Re \sp{i \beta_n (x_n,\xi_n) - \mathcal{A} (x_n,\xi_n)}{(x_n,\xi_n)}_{\H,Q_c}
	 \xrightarrow{n \rightarrow \infty} 0,
	 \nonumber
	\end{align}
so
	\begin{equation}
	f(x_n) \xrightarrow{n \rightarrow \infty} 0.
	\end{equation}
Since
	\begin{equation}
	A_0 x_n - i \beta_n x_n \xrightarrow{n \rightarrow \infty} 0
	\end{equation}
and $f$ has the property AIEP this implies
	\begin{equation}
	x_n \xrightarrow{n \rightarrow \infty} 0.
	\end{equation}
Let us now consider $(\xi_n)_n \subset \Xi$. We have by assumption
	\begin{equation}
	i \beta_n \xi_n - B_c \mathcal{C}_1 x_n - A_c \xi_n \xrightarrow{n \rightarrow \infty} 0,
	 \qquad \text{in} \ \Xi,
	 \nonumber
	\end{equation}
and dividing by $\beta_n \not= 0$ (for $n$ sufficiently large) we get
	\begin{equation}
	\frac{B_c \mathcal{C}_1 x_n}{\beta_n} - i \xi_n \xrightarrow{n \rightarrow \infty} 0,
	 \qquad \text{in} \ \Xi.
	 \nonumber
	\end{equation}
Moreover $\norm{\frac{\A(x_n,\xi_n)}{\beta_n}}$ is bounded and using Lemma \ref{lem:equivalent_norms} we have
	\begin{equation}
	\norm{\cdot}_{\A}
	 \simeq \norm{\cdot}_{\H^{-1}H^N \times \Xi}.
	\end{equation}
Hence $\frac{\H x_n}{\beta_n}$ is a bounded sequence in $H^N(0,1;\C^d)$ and thus by Lemma \ref{lem:interpolation} it is a null sequence in $C^{N-1}([0,1];\C^d)$.
Since $B_c \mathcal{C}_1 x_n$ continuously depends on $\H x_n \in C^{N-1}([0,1];\C^d)$ this implies $\xi_n \rightarrow 0$, so
	\begin{equation}
	(x_n,\xi_n) \xrightarrow{n \rightarrow \infty} 0,
	 \qquad \text{in} \ X \times \Xi.
	 \nonumber
	\end{equation}
From Stability Theorem \ref{thm:gearhart} we deduce exponential stability.

3.\ is a direct consequence of 1.\ and 2.
\end{proof}

\subsection{SIP Controllers with Colocated Input/Output}
\label{subsec:sip_controllers}

We make the following assumption on the infinite-dimensional part.
\begin{assumption}
Assume that the infinite-dimensional port-Hamiltonian system is \emph{passive},
i.e. for all $x \in \dom(A_0)$ it satisfies the balance equation
	\begin{equation}
	\Re \sp{A_0 x}{x}_{\H}
	 \leq \Re \sp{\mathcal{B} x}{\mathcal{C} x}_{\C^{Nd}}.
	 \label{eqn:passive}
	\end{equation}
\end{assumption}
(In particular the corresponding operator on $X$ for $\mathcal{B} x = 0$ generates a contraction $C_0$-semigroup.)
Further we concentrate on finite-dimensional controllers with colocated input and output which are strictly input passive.
\begin{definition}
Let a linear control system
	\begin{align}
	\dot x
	 &= \tilde A x + \tilde B u
	 \nonumber \\
	y
	 &= \tilde C x + \tilde D u
	\end{align}
with state space $\tilde X$ and input and output space $\tilde U = \tilde Y$ (all Hilbert spaces) be given where $\tilde A$ generates a $C_0$-semigroup on $\tilde X$ and $\tilde B \in \B(\tilde U,\tilde X)$, $\tilde C \in \B(\tilde X, \tilde U)$ and $\tilde D \in \B(\tilde U)$ are linear (and continuous) operators.
	\begin{enumerate}
	 \item We say that input and output are \emph{colocated} if $\tilde C \in \B(\tilde X,\tilde U)$ is the adjoint operator of $\tilde B \in \B(\tilde U, \tilde X)$.
	 \item The system is called \emph{strictly input passive} (SIP) if for some $\sigma > 0$ and any solution $x$ one has the estimate
	  \begin{equation}
	 \Re \sp{\dot x}{x}_{\tilde X}
	  \leq \sp{u}{y}_{\tilde U} - \sigma \norm{u}_{\tilde U}^2.
	\end{equation}
	\end{enumerate}
\end{definition}
In our case we assume $m = \tilde m$ and the controller has the form
	\begin{align}
	 \dot \xi
	  &= (J_c - R_c) Q_c \xi + B_c u_c
	  \nonumber \\
	 y_c
	  &= B_c^* Q_c \xi + D_c u_c 
	\end{align}
where $\xi \in \Xi = \C^n$ with inner product $\sp{\xi}{\eta}_{Q_c} = \xi^* Q_c \eta$ for the $n \times n$-matrix $Q_c = Q_c^* > 0$ and $J_c = - J_c^*, \ R_c = R_c^* \geq 0, \ B_c, \ D_c$ matrices of suitable dimension.
For the system to be SIP we demand $D_c = D_c^* \geq \sigma I > 0$.
We then have the following generation result for the operator.
\begin{theorem}
\label{thm:sip_contraction_sgr}
The operator $\A$ generates a contractive $C_0$-semigroup on $X \times \Xi$ and has compact resolvent.
\end{theorem}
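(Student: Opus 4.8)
The plan is to invoke the generation result Theorem \ref{Thm:4.1}. Since that theorem already delivers both the generation of a contractive $C_0$-semigroup and the compactness of the resolvent once dissipativity is known, it suffices to verify the dissipativity inequality (\ref{eqn:dissipativity_ic}), i.e.\ $\Re \sp{\A(x,\xi)}{(x,\xi)}_{\H,Q_c} \leq 0$ for every $(x,\xi) \in \dom(\A)$. The whole content of the proof is thus a single quadratic-form computation into which all the SIP structure (colocation $C_c = B_c^* Q_c$, $J_c = - J_c^*$, $R_c = R_c^* \geq 0$, $D_c = D_c^* \geq \sigma I$) and the passivity assumption (\ref{eqn:passive}) are fed.

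First I would split the inner product along the product structure,
	\begin{equation}
	\Re \sp{\A(x,\xi)}{(x,\xi)}_{\H,Q_c}
	 = \Re \sp{A_0 x}{x}_{\H} + \Re \sp{B_c \mathcal{C}_1 x + A_c \xi}{\xi}_{Q_c},
	 \nonumber
	\end{equation}
and bound the first summand via passivity, $\Re \sp{A_0 x}{x}_{\H} \leq \Re \sp{\mathcal{B} x}{\mathcal{C} x}_{\C^{Nd}}$. On $\dom(\A)$ one has $\mathcal{B}_2 x = W_2 \left( \begin{smallmatrix} f_{\partial,\H x} \\ e_{\partial,\H x} \end{smallmatrix} \right) = 0$, so the second block drops out and this term reduces to $\Re \sp{\mathcal{B}_1 x}{\mathcal{C}_1 x}$. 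The closed-loop interconnection is encoded in the domain constraint $(\mathcal{B}_1 + D_c \mathcal{C}_1) x + C_c \xi = 0$ with $C_c = B_c^* Q_c$, which I would solve as $\mathcal{B}_1 x = - D_c \mathcal{C}_1 x - B_c^* Q_c \xi$ and substitute.

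Next I would expand the finite-dimensional term using $A_c = (J_c - R_c) Q_c$. Since $J_c$ is skew-adjoint its quadratic form is purely imaginary and contributes nothing to the real part, while $R_c \geq 0$ yields the nonpositive piece $- (Q_c \xi)^* R_c (Q_c \xi)$; the only surviving cross term is $\Re \sp{\mathcal{C}_1 x}{B_c^* Q_c \xi}$. On the other hand, inserting $\mathcal{B}_1 x = - D_c \mathcal{C}_1 x - B_c^* Q_c \xi$ into $\Re \sp{\mathcal{B}_1 x}{\mathcal{C}_1 x}$ produces $- \sp{D_c \mathcal{C}_1 x}{\mathcal{C}_1 x}_{\C^m} - \Re \sp{B_c^* Q_c \xi}{\mathcal{C}_1 x}$. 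The crucial observation is that the two cross terms cancel exactly, leaving
	\begin{equation}
	\Re \sp{\A(x,\xi)}{(x,\xi)}_{\H,Q_c}
	 \leq - \sp{D_c \mathcal{C}_1 x}{\mathcal{C}_1 x}_{\C^m} - (Q_c \xi)^* R_c (Q_c \xi)
	 \leq 0,
	 \nonumber
	\end{equation}
using $D_c \geq \sigma I > 0$ and $R_c \geq 0$.

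I expect the only delicate point to be the adjoint and inner-product bookkeeping that makes the two cross terms genuinely annihilate one another; this cancellation is exactly the analytic manifestation of the colocation hypothesis $C_c = B_c^* Q_c$ combined with the feedback law $u_c = y_1$, $u_1 = - y_c$, and it is the heart of the argument. Once dissipativity is established, Theorem \ref{Thm:4.1} immediately gives that $\A$ generates a contractive $C_0$-semigroup on $X \times \Xi$ and has compact resolvent, which completes the proof.
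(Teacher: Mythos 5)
Your proposal is correct and follows essentially the same route as the paper: both verify the dissipativity hypothesis (\ref{eqn:dissipativity_ic}) and then invoke Theorem \ref{Thm:4.1} for generation and compactness of the resolvent. The paper compresses the computation into the single power balance $\Re \sp{\A(x,\xi)}{(x,\xi)}_{\H,Q_c} \leq \Re\sp{u}{y} + \Re\sp{u_c}{y_c} = 0$ using the interconnection $u_c = y_1$, $u_1 = -y_c$; your explicit expansion, with the cross terms cancelling via the colocation $C_c = B_c^* Q_c$, is exactly the unwound version of that identity.
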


\begin{proof}
This is a special case of Theorem \ref{Thm:4.1} since
	\begin{equation}
	\Re \sp{\A (x,\xi)}{(x,\xi)}_{\H,Q_c}
	 \leq \Re \sp{u}{y}_{\C^{Nd}} + \Re \sp{u_c}{y_c}_{\C^m}
	 = 0.
	\end{equation}
\end{proof}

\begin{theorem}
\label{thm:sip_controller_stability}
Let $\sigma(A_c) \subseteq \C_0^-$ and assume that the condition
	\begin{equation}
	\abs{u}^2 + \abs{y_1}^2
	 := \abs{\mathcal{B}x}^2 + \abs{\mathcal{C}_1 x}^2
	 \geq f(x),
	 \quad
	 x \in \dom(A_0)
	\end{equation}
holds where $f: \dom(A_0) \rightarrow \R_+$.
Then
	\begin{enumerate}
	\item If $f$ has property ASP then $(\T(t))_{t\geq0}$ is asymptotically (strongly) stable.
	\item If $f$ has property AIEP and $\sigma_p(\A) \cap i\R = \emptyset$ then $(\T(t))_{t\geq0}$ is (uniformly) exponentially stable.
	\item If $f$ has property ESP then $(\T(t))_{t\geq0}$ is (uniformly) exponentially stable.
	\end{enumerate}
\end{theorem}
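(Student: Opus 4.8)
The plan is to mirror the structure of Proposition \ref{prop:hybrid_strongstability}, but since the hypothesis here controls the \emph{input/output maps} rather than the dissipation of $\A$ directly, I would first sharpen the dissipativity estimate and only then run an ASP/AIEP argument adapted to the colocated feedback. Concretely, I would revisit the computation in the proof of Theorem \ref{thm:sip_contraction_sgr} and keep the strictly-input-passive term instead of discarding it: the passivity Assumption for the infinite-dimensional part together with the SIP estimate $\Re \sp{\dot\xi}{\xi}_{Q_c} \leq \sp{u_c}{y_c} - \sigma \abs{u_c}^2$ gives, for all $(x,\xi) \in \dom(\A)$,
\begin{equation}
\Re \sp{\A(x,\xi)}{(x,\xi)}_{\H,Q_c}
 \leq \Re \sp{u}{y}_{\C^{Nd}} + \Re \sp{u_c}{y_c}_{\C^m} - \sigma \abs{u_c}^2 .
\nonumber
\end{equation}
The feedback law $u_c = y_1 = \mathcal{C}_1 x$, $u_1 = -y_c$ together with $\mathcal{B}_2 x = 0$ on $\dom(\A)$ makes the two cross terms cancel, leaving the clean bound $\Re \sp{\A(x,\xi)}{(x,\xi)}_{\H,Q_c} \leq - \sigma \abs{\mathcal{C}_1 x}^2$. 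This cancellation, which uses colocation $C_c = B_c^* Q_c$ and the sign conventions of the interconnection, is the technical heart of the argument.

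For part 1 I would argue spectrally. By Theorem \ref{thm:sip_contraction_sgr} the operator $\A$ has compact resolvent, so $\sigma(\A) = \sigma_p(\A)$ and it suffices to show $i\R \cap \sigma_p(\A) = \emptyset$. If $i\beta(x,\xi) = \A(x,\xi)$ for some $\beta \in \R$, taking real parts gives $0 = \Re \sp{\A(x,\xi)}{(x,\xi)}_{\H,Q_c} \leq - \sigma \abs{\mathcal{C}_1 x}^2$, hence $\mathcal{C}_1 x = 0$. The finite-dimensional component then decouples to $i\beta \xi = A_c \xi$; since $\sigma(A_c) \subseteq \C_0^-$ we have $i\beta \in \rho(A_c)$ and therefore $\xi = 0$. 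Consequently $u_1 = -y_c = -(C_c \xi + D_c \mathcal{C}_1 x) = 0$, and with $\mathcal{B}_2 x = 0$ this yields $\abs{\mathcal{B}x}^2 + \abs{\mathcal{C}_1 x}^2 = 0$, so $f(x) = 0$. Since the first component gives $i\beta x = A_0 x$, property ASP for $A_0$ forces $x = 0$, whence $(x,\xi) = 0$. Asymptotic stability then follows from Stability Theorem \ref{thm:Arendt-Batty}.

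For part 2 I would run the parallel sequence argument. Given $(x_n,\xi_n,\beta_n)$ with $\norm{(x_n,\xi_n)} \leq c$, $\abs{\beta_n} \to \infty$ and $i\beta_n(x_n,\xi_n) - \A(x_n,\xi_n) \to 0$, pairing with $(x_n,\xi_n)$ and using the sharpened estimate gives $\Re \sp{\A(x_n,\xi_n)}{(x_n,\xi_n)}_{\H,Q_c} \to 0$, hence $\mathcal{C}_1 x_n \to 0$. The finite-dimensional equation reads $(i\beta_n I - A_c)\xi_n = B_c \mathcal{C}_1 x_n + o(1) \to 0$, and since $\norm{(i\beta_n I - A_c)^{-1}} \to 0$ as $\abs{\beta_n} \to \infty$, we obtain $\xi_n \to 0$; then $u_{1,n} = -(C_c \xi_n + D_c \mathcal{C}_1 x_n) \to 0$, so $\abs{\mathcal{B}x_n} \to 0$. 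Therefore $f(x_n) \leq \abs{\mathcal{B}x_n}^2 + \abs{\mathcal{C}_1 x_n}^2 \to 0$, and as the first component yields $A_0 x_n - i\beta_n x_n \to 0$, property AIEP for $A_0$ gives $x_n \to 0$, whence $(x_n,\xi_n) \to 0$. Together with $\sigma_p(\A) \cap i\R = \emptyset$ (so that $\sigma(\A) \subseteq \C_0^-$ by contractivity and compact resolvent) this establishes the resolvent bound required by Gearhart's Theorem \ref{thm:gearhart}, giving exponential stability. Part 3 is immediate since ASP supplies $\sigma_p(\A) \cap i\R = \emptyset$.

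The main obstacle I anticipate is twofold. First, one must verify that the interconnection cross terms cancel exactly, so that only the strict term $-\sigma \abs{\mathcal{C}_1 x}^2$ survives; this is where colocation and strict input passivity are genuinely used. Second, unlike in Proposition \ref{prop:hybrid_strongstability}, the dissipation does \emph{not} control $\abs{\mathcal{B}x}$ directly, so the recovery of $\abs{\mathcal{B}x_n} \to 0$ (needed to reach $f(x_n) \to 0$) must be routed through $\xi_n \to 0$. The hypothesis $\sigma(A_c) \subseteq \C_0^-$ is exactly what keeps $i\R \subseteq \rho(A_c)$ in the spectral argument and, via the high-frequency decay of the finite-dimensional resolvent, what forces $\xi_n \to 0$ in the asymptotic one.
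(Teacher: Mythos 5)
Your proposal is correct and follows essentially the same route as the paper's own proof: the sharpened dissipativity estimate $\Re \sp{\A(x,\xi)}{(x,\xi)}_{\H,Q_c} \leq -\sigma\abs{u_c}^2$ obtained by keeping the strict input-passivity term, the spectral argument with $\mathcal{C}_1 x = 0 \Rightarrow \xi = 0 \Rightarrow f(x) = 0$ for ASP, and the sequence argument recovering $\mathcal{B}x_n \to 0$ through $\xi_n \to 0$ via the resolvent of $A_c$ for AIEP. The only cosmetic difference is that you invoke $\norm{(i\beta_n I - A_c)^{-1}} \to 0$ where the paper uses $\sup_{i\R}\norm{R(\cdot,A_c)} < \infty$; both are valid for the matrix $A_c$ with $\sigma(A_c) \subseteq \C_0^-$.
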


\begin{proof}
We already know that $\A$ generates a (contraction) $C_0$-semigroup and has compact resolvent.
Remark that for any $(x, \xi) \in \dom(\A)$ we get
	\begin{align}
	\Re \sp{\A (x,\xi)}{(x, \xi)}_{\H,Q_c}
	 &= \Re \sp{A_0 x}{x}_{\H} + \Re \sp{B_c \mathcal{C}_1 x + (J_c-R_c)Q_c \xi}{\xi}_{Q_c}
	 \nonumber \\
	 &\leq \Re \sp{u}{y}_{\C^m} + \Re \sp{u_c}{B_c^* Q_c \xi}_{\C^m} - \sp{R_c Q_c \xi}{Q_c \xi}_{\C^m}
	 \nonumber \\
	 &\leq \Re \sp{u}{y}_{\C^m} + \Re \sp{u_c}{y_c}_{\C^m} - \sp{u_c}{D_c u_c}_{\C^m}
	 \nonumber \\
	 &\leq - \sigma \abs{u_c}^2.
	 \nonumber
	\end{align}
1.) Assume that $f$ has property ASP.
We prove that $\A$ has no eigenvalues on the imaginary axis.
Let $\beta \in \R$ and $(x,\xi) \in \dom(\A)$ with
	\begin{equation}
	\A (x, \xi)
	 = i \beta (x, \xi)
	 \nonumber
	\end{equation}
be arbitrary.
Then
	\begin{equation}
	0
	 = \Re \sp{i \beta (x,\xi)}{(x,\xi)}_{\H,Q_c}
	 = \Re \sp{\A (x,\xi)}{(x, \xi)}_{\H,Q_c}
	 \leq - \sigma \abs{u_c}^2,
	 \nonumber
	\end{equation}
so $\mathcal{C}_1 x = y_1 = u_c = 0$.
From the equation
	\begin{equation}
	B_c \mathcal{C}_1 x + A_c \xi
	 = i \beta \xi
	 \nonumber
	\end{equation}
and from $\sigma(A_c) \subset \C_0^-$ we then deduce $\xi = 0$ and this also implies $y_c = 0$.
So
	\begin{equation}
	f(x)
	 \leq \abs{u}^2 + \abs{y_1}^2
	 = \abs{u_c}^2 + \abs{y_c}^2
	 = 0
	 \nonumber
	\end{equation}
and hence $f(x) = 0$ and $A_0 x = i \beta x$.
From property ASP we also conclude $x = 0$ and hence $\A$ has no eigenvalues on the imaginary axis.
The result follows from Theorem \ref{thm:Arendt-Batty}.

2.) Let us assume $(\T(t))_{t\geq0}$ is asymptotically stable and $f$ has property AIEP.
Let $((x_n,\xi_n),\beta_n)_{n \geq 1} \subset \dom(\A) \times \Xi$ be any sequence with $\norm{x_n} \leq c, \ \abs{\beta_n} \rightarrow \infty$ and
	\begin{equation}
	\A (x_n, \xi_n) - i \beta_n (x_n, \xi_n)
	 \xrightarrow{n \rightarrow \infty} 0
	 \quad
	 \text{in} \ X \times \Xi.
	\end{equation}
We then especially have
	\begin{equation}
	0
	 \leftarrow \sp {(\A - i \beta_n) (x_n, \xi_n)}{(x_n,\xi_n)}_{\H,Q_c}
	 \leq - \sigma \abs{u_{c,n}}^2,
	 \nonumber
	\end{equation}
thus $y_{1,n} = \mathcal{C}_1 x_n = u_{c,n} \rightarrow 0$.
Also
	\begin{equation}
	B_c u_{c,n} + A_c \xi_n - i \beta_n \xi_n
	 =: \eta_n
	 \xrightarrow{n \rightarrow \infty} 0
	 \nonumber
	\end{equation}
and since $\sup_{i \R} \norm{R(\cdot,A_c)} < + \infty$ we obtain
	\begin{equation}
	\xi_n
	 = R(i \beta_n, A_c) (B_c u_{c,n} - \eta_n)
	 \xrightarrow{n \rightarrow \infty} 0
	\end{equation}
and then also
	\begin{equation}
	- u_{1,n}
	 = y_{c,n}
	 = B_c^* Q_c \xi_n + D_c u_{c,n}
 	 \xrightarrow{n \rightarrow \infty} 0.
	 \nonumber
	\end{equation}
It follows
	\begin{equation}
	 0
	  \leq f(x_n)
	  \leq \abs{\mathcal{B} x_n}^2 + \abs{\mathcal{C}_1 x_n}^2
	  \xrightarrow{n \rightarrow \infty} 0
	\end{equation}
and since $A_0 x_n - i \beta_n x_n \rightarrow 0$ we obtain $x_n \rightarrow 0$ from the AIEP property,
i.e. $(x_n,\xi_n) \rightarrow 0$ and the assertion follows from Theorem \ref{thm:gearhart}.

3.\ is a direct consequence of 1.\ and 2.
\end{proof}

As a result, Theorem 14 of \cite{RamirezZwartLeGorrec_2013} follows directly from Proposition \ref{prop:1st_order} and Theorem \ref{thm:sip_controller_stability}.

\begin{corollary}
Let $N=1, \ \sigma(A_c) \subseteq \C_0^-$ and assume that for some $\kappa > 0$
	\begin{equation}
	\abs{u}^2 + \abs{y_1}^2
	 := \abs{\mathcal{B}x}^2 + \abs{\mathcal{C}_1 x}^2
	 \geq \kappa \abs{(\H x)(0)}^2,
	 \quad
	 x \in \dom(A_0).
	\end{equation}
Then the controller exponentially stabilizes the port-Hamiltonian system,
i.e. the semigroup $(\T(t))_{t\geq0}$ is exponentially stable.
\end{corollary}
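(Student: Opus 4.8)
The plan is to read off the corollary as a pure combination of a freestanding property hidden inside the proof of Proposition \ref{prop:1st_order} and part 3 of Theorem \ref{thm:sip_controller_stability}. The crucial point is that Proposition \ref{prop:1st_order} proves more than its statement advertises: along the way it shows that the boundary functional $f_0(x) := \abs{(\H x)(0)}^2$, regarded as a map $\dom(A_0)\to\R_+$, has property ESP \emph{for the operator} $A_0$. The ASP part is supplied by Proposition \ref{prop:asymptotic_stability} (which for $N=1$ is exactly the statement for $\abs{(\H x)(0)}^2$ and rests only on invertibility of $P_1$ and uniqueness for the first-order ODE $i\beta x = A_0 x$), and the AIEP part is the frequency-domain multiplier estimate carried out there with a test function $q\in C^1([0,1];\R)$ satisfying $q(1)=0$, using only $N=1$, the regularity $\H\in W^1_{\infty}$ and $P_1=P_1^*$. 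Neither ingredient invokes the static dissipativity hypothesis of Proposition \ref{prop:1st_order}, so the ESP of $f_0$ for $A_0$ is available unconditionally here.

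First I would record that $f_0$ has property ESP for $A_0$. Then, setting $f := \kappa f_0$, I would invoke Lemma \ref{lem:properties} with $B=B_0=A_0$ and the trivial domination $\kappa f_0 \le f$, concluding that $f$ also has property ESP for $A_0$; this absorbs the constant $\kappa$. The standing hypotheses of Theorem \ref{thm:sip_controller_stability} are in force: the passivity of the infinite-dimensional part and the colocated SIP structure of the controller are the running assumptions of this subsection, $\sigma(A_c)\subseteq\C_0^-$ is assumed in the corollary, and the regularity $\H\in W^1_{\infty}$ needed for the first-order estimate is understood throughout.

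It then remains only to match the lower bound required by Theorem \ref{thm:sip_controller_stability}. The hypothesis of the corollary reads
\[
\abs{\mathcal{B}x}^2 + \abs{\mathcal{C}_1 x}^2 \ \geq\ \kappa\abs{(\H x)(0)}^2 \ =\ f(x), \qquad x\in\dom(A_0),
\]
which is precisely the inequality $\abs{\mathcal{B}x}^2 + \abs{\mathcal{C}_1 x}^2 \ge f(x)$ demanded there. Since $f$ has property ESP, part 3 of Theorem \ref{thm:sip_controller_stability} yields exponential stability of $(\T(t))_{t\geq0}$, finishing the argument.

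The one step deserving care is the first: establishing that the frequency-domain computation of Proposition \ref{prop:1st_order} actually proves an ESP statement about $A_0$ that is logically independent of the static feedback law, so that it may be transplanted verbatim into the hybrid SIP setting. This decoupling is exactly the reuse mechanism advertised for ASP/AIEP/ESP in the introduction; once it is made explicit there is no further analytic obstacle and the corollary follows by assembly.
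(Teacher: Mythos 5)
Your proposal is correct and takes essentially the same route as the paper, which states the corollary as a direct consequence of Proposition \ref{prop:1st_order} and Theorem \ref{thm:sip_controller_stability}: the proof of Proposition \ref{prop:1st_order} explicitly establishes that $f(x)=\abs{(\H x)(0)}^2$ has property ESP for $A_0$ (with ASP from Proposition \ref{prop:asymptotic_stability} and AIEP from the multiplier estimate, neither using the static dissipativity hypothesis), after which part 3 of Theorem \ref{thm:sip_controller_stability} applies verbatim. Your care in noting that the ESP statement is decoupled from the boundary feedback law is precisely the reuse mechanism the paper built the ASP/AIEP/ESP framework for.
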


For the case $N = 2$ the following follows directly from the results of Subsection \ref{subsec:exponential_stability_second_order}.

\begin{corollary}
\label{cor:sip_second_order}
Let $N = 2, \ \sigma(A_c) \subseteq \C_0^-$.
 \begin{enumerate}
  \item If for some $\kappa > 0$ and all $x \in \dom(A_0)$
	\begin{equation}
	 \abs{\mathcal{B} x}^2 + \abs{\mathcal{C}_1 x}^2
	  \geq \kappa \left( \abs{\H x (0)}^2 + \abs{(\H x)'(0)}^2 \right),
	\end{equation}
	then the semigroup $(\T(t))_{t\geq0}$ is asymptotically stable.
  \item If  for some $\kappa > 0$ and all $x \in \dom(A_0)$
	\begin{equation}
	 \abs{\mathcal{B} x}^2 + \abs{\mathcal{C}_1 x}^2
	  \geq \kappa \left( \abs{(\H x)(0)}^2 + \abs{(\H x)'(0)}^2 + \left\{ \begin{array}{c} \abs{(\H x)(1)}^2 \\ \text{or} \\ \abs{(\H x)'(1)}^2 \end{array} \right\} \right),
	\end{equation}
	then $(\T(t))_{t\geq0}$ is exponentially stable.
  \item If $A_0$ has the structure as in Proposition \ref{prop:structure_exp_stability} and $(\T(t))_{t\geq0}$ is asymptotically stable and for some $\kappa > 0$ and all $x \in \dom(A_0)$
	\begin{equation}
	 \abs{\mathcal{B} x}^2 + \abs{\mathcal{C}_1 x}^2
	  \geq \kappa \left( \abs{(\H x)(0)}^2 + \left\{ \begin{array}{c} \abs{(\H_1 x_1)'(0)}^2 \\ \text{or} \\ \abs{(\H_2 x_2)'(0)}^2 \end{array} \right\} + \left\{ \begin{array}{c} \abs{(\H_1 x_1)(1)}^2 \\ \text{or} \\ \abs{(\H_1 x_1)'(1)}^2 \end{array} \right\} \right),
	\end{equation}
	then $(\T(t))_{t\geq0}$ is exponentially stable.
 \end{enumerate}
\end{corollary}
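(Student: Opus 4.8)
The plan is to read each of the three parts as a direct application of Theorem \ref{thm:sip_controller_stability}, whose hypothesis is precisely a lower bound $\abs{\mathcal{B}x}^2 + \abs{\mathcal{C}_1 x}^2 \geq f(x)$ for a function $f$ on $\dom(A_0)$ possessing one of the properties ASP, AIEP or ESP for the operator $A_0$. In each part the right-hand side of the assumed inequality \emph{is} exactly such an $f$, so the only work is to recall which property it enjoys and which clause of Theorem \ref{thm:sip_controller_stability} then applies. The standing hypothesis $\sigma(A_c) \subseteq \C_0^-$ is the spectral assumption on the controller required throughout that theorem.

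For part 1, set $f(x) = \kappa(\abs{(\H x)(0)}^2 + \abs{(\H x)'(0)}^2)$. This is, up to the factor $\kappa$, the $N = 2$ instance of the function $\sum_{k=0}^{N-1} \abs{(\H x)^{(k)}(0)}^2$ shown to have property ASP for $A_0$ in the proof of Proposition \ref{prop:asymptotic_stability}; by Lemma \ref{lem:properties} the positive factor $\kappa$ is harmless. Clause 1 of Theorem \ref{thm:sip_controller_stability} then yields asymptotic stability. For part 2, take $f$ to be $\kappa$ times the function in \eqref{eqn:prop:exp_stability_2}; Proposition \ref{prop:exp_stability} establishes that this function has property ESP for $A_0$, so clause 3 of Theorem \ref{thm:sip_controller_stability} gives exponential stability immediately.

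Part 3 is where the one genuine subtlety lies, and it explains why this part carries the extra hypothesis of asymptotic stability. Under the structural assumptions of Proposition \ref{prop:structure_exp_stability}, the boundary term controls only $\abs{(\H x)(0)}^2$ together with a single first-order component, so the associated $f$ in general has only property AIEP and \emph{not} ASP --- which is exactly why Proposition \ref{prop:structure_exp_stability} itself presupposes asymptotic stability. Here I would feed this AIEP (verified for $A_0$ in the same fashion as in Proposition \ref{prop:exp_stability}) into clause 2 of Theorem \ref{thm:sip_controller_stability}, which additionally requires $\sigma_p(\A) \cap i\R = \emptyset$. This last condition is supplied by the assumed asymptotic stability of $(\T(t))_{t\geq0}$: by the remark following Theorem \ref{thm:Arendt-Batty}, a contraction semigroup whose generator has compact resolvent --- which $\A$ does by Theorem \ref{thm:sip_contraction_sgr} --- is asymptotically stable if and only if $\sigma_p(\A) \subseteq \C_0^-$, and in particular $\sigma_p(\A) \cap i\R = \emptyset$. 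Thus clause 2 delivers exponential stability. The main obstacle is therefore not a computation but this bookkeeping step: correctly matching the weaker property AIEP with the spectral-gap requirement, and recognizing that the asymptotic-stability hypothesis furnishes exactly that gap through the compactness of the resolvent of $\A$.
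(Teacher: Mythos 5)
Your proposal is correct and follows exactly the route the paper intends: the paper gives no explicit proof, stating only that the corollary ``follows directly from the results of Subsection \ref{subsec:exponential_stability_second_order}'', i.e.\ feeding the ASP/ESP/AIEP functions from Propositions \ref{prop:asymptotic_stability}, \ref{prop:exp_stability} and \ref{prop:structure_exp_stability} into the three clauses of Theorem \ref{thm:sip_controller_stability}, which is precisely your bookkeeping. Your handling of part 3 --- using the asymptotic-stability hypothesis together with the compact resolvent of $\A$ (Theorem \ref{thm:sip_contraction_sgr}) to obtain $\sigma_p(\A) \cap i\R = \emptyset$ for clause 2 --- correctly fills in the one step the paper leaves implicit.
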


\section{Example: Stabilization of the Euler-Bernoulli beam}
\label{sec:examples}

We illustrate our theoretical results by means of the Euler-Bernoulli beam equation
	\begin{equation}
	 \rho(\z) \omega_{tt}(t,\z) + \frac{\partial^2}{\partial \z^2}(EI(\z) \omega_{\z \z}(t,\z))
	  = 0,
	  \quad
	  \z \in (0,1), \ t \geq 0.
	\end{equation}
The distributed parameters $\rho, EI$ are assumed to be of class $W_{\infty}^1(0,1;\R)$ and strictly positive.
The energy of any sufficient smooth solution is given by
	\begin{equation}
	E(t)
	 = \frac{1}{2} \int_0^1 \rho(\z) \abs{\omega_t(t,\z)}^2 + EI(\z) \abs{\omega_{\z\z}(t,\z)}^2 d\z.
	\end{equation}
For the standard port-Hamiltonian formulation we thus set
	\begin{equation}
	 x_1
	  := \rho \omega_t,
	  \quad
	 x_2
	  := \omega_{\z\z}
	  \nonumber
	\end{equation}
and denote by $\H = \diag (\H_1, \H_2) = \diag(\frac{1}{\rho}, EI)$ the Hamiltonian density matrix function.
The Euler-Bernoulli beam equation may then be written as
	\begin{equation}
	 \frac{\partial}{\partial t} x(t,\z)
	  = \left( \begin{array}{cc} 0 & -1 \\ 1 & 0 \end{array} \right) (\H x)''(t,\z).
	\end{equation}
For the related operator $A_0$ we obtain the balance equation
	\begin{equation}
	 \Re \sp{A_0 x}{x}_{\H}
	  = \Re \left[ (\H_1 x_1)'(\z)^* (\H_2 x_2)(\z) - (\H_1 x_1)(\z)^* (\H_2 x_2)'(\z) \right]_0^1.
	  \label{eqn:euler-bernoulli_balance}
	\end{equation}
\begin{example}[Clamped Left End]
\label{exa:euler-bernoulli_cle}
In \cite{ChenEtAl_1987} the authors consider clamped left end boundary conditions and static feedback at the right end.
	\begin{align}
	\omega(t,0)
	 = \omega_{\z}(t,0)
	 &= 0
	 &(\text{clamped left end})
	 \nonumber \\
	\frac{\partial}{\partial \z}(EI \omega_{\z \z})(t,1)
	 &= \alpha_1 \omega_t(t,1)
	 \nonumber \\
	(EI \omega_{\z \z})(t,1)
	 &= - \alpha_2 \omega_{t\z}(t,1)
	 &(\text{static feedback})
	\end{align}
for feedback constants $\alpha_1, \alpha_2 \geq 0$.
The corresponding operator $A_{\alpha_1, \alpha_2}$ on $X$ is then given by
	\begin{align}
	A_{\alpha_1,\alpha_2} x
	 &= A_0|_{\dom(A_{\alpha_1,\alpha_2})}
	 \nonumber \\
	\dom(A_{\alpha_1, \alpha_2})
	 &= \{ x \in \dom(A_0): \ (\H_1 x_1)(0) = (\H_1 x_1)'(0) = 0,
	 \nonumber \\
	 & \quad (\H_2 x_2)'(1) = \alpha_1 (\H_1 x_1)(1), \ (\H_2 x_2)(1) = - \alpha_2 (\H_1 x_1)'(1) \}.
	 \nonumber
	\end{align}
Clearly if $\alpha_1 = \alpha_2 = 0$ then $\Re \sp{A_{0,0} x}{x}_{\H} = 0$ and $A_{0,0}$ generates an unitary $C_0$-group.
Next, let $\alpha_1, \alpha_2 > 0$.
Then we obtain from equation (\ref{eqn:euler-bernoulli_balance}) for $x \in \dom(A_{\alpha_1,\alpha_2})$ that
	\begin{align}
	\Re \sp{A_{\alpha_1,\alpha_2}x}{x}_{\H}
	 &\leq - \kappa \left( \abs{(\H x)(1)}^2 + \abs{(\H x)'(1)}^2  + \abs{(\H_1 x_1)(0)}^2 + \abs{(\H_1 x_1)'(0)}^2\right)
	 \nonumber
	\end{align}
Asymptotic and then exponential stability follow by Propositions \ref{prop:asymptotic_stability} and \ref{prop:structure_exp_stability}.
Last we investigate the case $\alpha_1 > 0$ and $\alpha_2 = 0$.
From the boundary conditions we obtain
	\begin{align}
	&\Re \sp{A_{\alpha_1,\alpha_2}x}{x}_{\H}
	 \nonumber \\
	 & \quad \leq - \kappa \left( \abs{(\H x)(1)}^2 + \abs{(\H_1 x_1)(0)}^2 + \abs{(\H_1 x_1)'(0)}^2 + \abs{(\H_2 x_2)'(1)}^2  \right)
	 \label{eqn:exa-eulerbernoulli-1}
	\end{align}
for all $x \in \dom(A_{\alpha_1,\alpha_2})$ and some $\kappa > 0$.
By ODE techniques one finds $\sigma_p(A_{\alpha_1,\alpha_2}) \cap i \R = \emptyset$ after which exponential stability again follows from Proposition \ref{prop:structure_exp_stability}, a result first proved in \cite{ChenEtAl_1987}.
\end{example}

\begin{remark}
Unfortunately our theoretical results do not cover the case $\alpha_1 = 0$ and $\alpha_2 > 0$. Although we may prove asymptotic stability in a similar way as before, for the corresponding operator $A_{0,\alpha_2}$ we only have the estimate
	\begin{equation}
	\Re \sp{A_{0,\alpha_2}x}{x}_{\H}
	 \leq - \kappa ( \abs{\H_1 x_1(0)}^2 + \abs{(\H_1 x_1)'(0)}^2 + \abs{(\H_2 x_2)(1)}^2 + \abs{(\H_1 x_1)'(1)}^2)
	 \nonumber
	\end{equation}
The uniform energy decay for this case has been proved in \cite{ChenEtAl_1987a}.
\end{remark}

The second example shows how interconnection structures naturally appear in the modelling of beams with a mass at one end.

\begin{example}[Both Ends Free]
Assume that both ends are free and there is a mass at the tip (as in \cite{GuoHuang_2004}).
	\begin{align}
	 EI(\z) \frac{\partial^2}{\partial \z^2} \omega(\z,t)|_{\z = 1}
	  &= - \frac{\partial}{\partial \z} \left( EI(\z) \frac{\partial}{\partial \z^2} \omega(t,\z) \right)|_{\z = 1}
	  = 0
	  \nonumber \\
	 \left. EI(\z) \frac{\partial^2}{\partial \z^2} \omega(t,\z) \right|_{\z = 0}
	  &= k_1 \frac{\partial}{\partial \z} \omega(t,0) + k_2 \frac{\partial^2}{\partial t \partial \z} \omega(t,0)
	  \nonumber \\
	 - \left. \frac{\partial}{\partial \z} \left( EI(\z) \frac{\partial^2}{\partial \z^2} \omega(t,\z) \right) \right|_{\z = 0}
	  &= k_3 \omega(t,0) + k_4 \frac{\partial}{\partial t} \omega(t,0)
	  \nonumber
	\end{align}
for feedback constants $k_j > 0$.
The system's total energy is
	\begin{align}
	\tilde E(t)
	 &= \frac{1}{2} \int_0^1 \left( EI(\z) \abs{\frac{\partial^2}{\partial \z^2} \omega(t,\z)}^2 + \rho(\z) \abs{\frac{\partial}{\partial t} \omega(t,\z)}^2 \right) d\z
	  \nonumber \\
	  &\  +\frac{1}{2} \left( k_1 \abs{\frac{\partial}{\partial \z} \omega(t,0)}^2 + k_3 \abs{\omega(t,0)}^2 \right),
	 \nonumber
	\end{align}
i.e. it decomposes into a continuous and a discrete part.
To interpret this as a hybrid system we choose $\xi = (\omega_{\z}(0), \omega(0))$ and the input- and output functions
	\begin{align}
	u_1(t)
	 &= \left( \begin{array}{c} (\H_1 x_1)'(t,0) \\ (\H_1 x_1)'(t,0)\end{array} \right),
	 \nonumber \\
	0
	 = u_2(t)
	 &= \left( \begin{array}{c} (\H_2 x_2)(t,1) \\ (\H_2 x_2)'(t,1) \end{array} \right)
	 \nonumber \\
	y(t)
	 &= \left( \begin{array}{c} - (\H_2 x_2)'(t,0) \\ (\H_2 x_2)(t,0) \end{array} \right).
	\end{align}
The infinite-dimensional part is passive due to equation (\ref{eqn:euler-bernoulli_balance}).
By feedback interconnection $u_1 = - y_c$ and $u_c = y$ with the finite-dimensional system
	\begin{align}
	\frac{\partial}{\partial t} \xi(t)
	 &= - \left( \begin{array}{cc} \frac{1}{k_2} & 0 \\ 0 & \frac{1}{k_4} \end{array} \right) \left( \begin{array}{cc} k_1 & 0 \\ 0 & k_3 \end{array} \right) \xi(t)
	   + \left( \begin{array}{cc} \frac{1}{k_2} & 0 \\ 0 & \frac{1}{k_4} \end{array} \right) u_c(t)
	 \nonumber \\
	y_c(t)
	 &= \left( \begin{array}{cc} \frac{1}{k_2} & 0 \\ 0 & \frac{1}{k_4} \end{array} \right) \left( \begin{array}{cc} k_1 & 0 \\ 0 & k_2 \end{array} \right) \xi(t)
	   + \left( \begin{array}{cc} k_1 & 0 \\ 0 & k_3 \end{array} \right) u_c(t),
	\end{align}
setting $Q_c = D_c = \left( \begin{smallmatrix} k_1 & 0 \\ 0 & k_3 \end{smallmatrix} \right)$ and $R_c = B_c = \left( \begin{smallmatrix} \frac{1}{k_2} & 0 \\ 0 & \frac{1}{k_4} \end{smallmatrix} \right)$ we see that this is indeed a port-Hamiltonian system interconnected in a energy preserving way with an exponentially stable SIP controller with colocated input and output.
Since
	\begin{equation}
	\abs{\mathcal{B} x}^2 + \abs{\mathcal{C}_1 x}^2
	 = \abs{(\H x)(0)}^2 + \abs{(\H x)'(0)}^2 + \abs{(\H_2 x_2)(1)}^2 + \abs{(\H_2 x_2)'(1)}^2
	 \nonumber
	\end{equation}
uniform exponential energy decay for the hybrid system follows from Corollary \ref{cor:sip_second_order}.
\end{example}

\section*{Acknowledgements}
We would like to thank Hafida Laasri for careful proof-reading and giving valuable suggestions.

\end{document}